\documentclass[10pt]{amsart}

\usepackage{amsthm}
\usepackage[ansinew]{inputenc}
\usepackage[english]{babel}
\usepackage{anysize}
\usepackage{amsfonts}
\usepackage{amssymb}
\usepackage{amsmath}
\usepackage{doc}
\usepackage{exscale}
\usepackage{fontenc}
\usepackage{ifthen}
\usepackage{latexsym}
\usepackage{syntonly}
\usepackage{graphicx}
\usepackage{float}
\usepackage{array}
\usepackage[all]{xy}
\usepackage{pstricks}
\usepackage{color}
\usepackage{epsfig}
\usepackage{psfrag}

%\marginsize{2.5cm}{2.5cm}{2.5cm}{2.5cm}

\pagestyle{plain}

% THEOREM Environments ---------------------------------------------------
\newtheorem{thm}{Theorem}
\newtheorem{cor}[thm]{Corollary}
\newtheorem{lem}[thm]{Lemma}
\newtheorem{pro}[thm]{Proposition}
\newtheorem{rmk}[thm]{Remark}
\newtheorem{df}[thm]{Definition}
\newtheorem{alg}[thm]{Algorithm}
\theoremstyle{definition}
\newtheorem{exam}[thm]{Example}
\numberwithin{thm}{section}

% MATH -------------------------------------------------------------------

% _______________________________________________________________________
% newcommands afegits per Victor
\newcommand{\beq}{\begin{equation} }
\newcommand{\enq}{\end{equation}}

%\renewcommand{\labelenumi}{\roman{enumi}.}

%newcommands afegits per Maria

%%% ----------------------------------------------------------------------

%%% ----------------------------------------------------------------------
\begin{document}

\title{Determining plane curve singularities from its polars}

%\tnotetext[t1]{This research has been partially supported by the Spanish Ministerio de Economía y Competitividad MTM2012-38122-C03-01/FEDER, and the Generalitat de Catalunya 2014 SGR 634.}

%% use optional labels to link authors explicitly to addresses:
%% \author[label1,label2]{<author name>}
%% \address[label1]{<address>}
%% \address[label2]{<address>}

\author[M. Alberich]{Maria Alberich-Carrami\~nana}
\address{Dept. Matem\`atica Aplicada I\\
Univ. Polit\`ecnica de Catalunya\\ Av. Diagonal 647, Barcelona
08028, Spain}
\address{Institut de Rob\`{o}tica i Inform\`{a}tica Industrial (CSIC-UPC),\\
Llorens i Artigues 4-6, 08028 Barcelona, Spain}
\email{Maria.Alberich@upc.edu}

\author[V. Gonz\'alez]{V\'ictor Gonz\'alez-Alonso}
\address{Institut f\"ur Algebraische Geometrie\\ Leibniz Universit\"at Hannover \\
Welfengarten 1, 30167 Hannover, Germany}
\email{gonzalez@math.uni-hannover.de}

\thanks{This research has been partially supported by the Spanish Committee for Science and Innovation through the I+D+i project M2009-14163-C02-02, and the Catalan Research Commission through the project 2009 SGR 1284. The second author completed this work with the support of grants FPU-AP2008-01849 of the Spanish Ministerio de Educaci\'{o}n and ERC StG 279723 ``Arithmetic of algebraic surfaces'' (SURFARI), which is gratefully acknowledged.}

%\author{Maria Alberich-Carrami\~nana and V\'{\i}ctor Gonz\'{a}lez-Alonso
%\thanks{This research has been partially supported by the Spanish Committee for Science and Innovation through the I+D+i project M2009-14163-C02-02, and the Catalan Research Commission through the project 2009 SGR 1284. The second author completed this work supported by grant FPU-AP2008-01849 of the Spanish Ministerio de Educaci\'{o}n.} \\
%\small{Departament de Matem\`atica Aplicada I, Universitat Polit\`ecnica de Catalunya,}\\
%\small{Av. Diagonal 647, 08028-Barcelona, Spain.} \\
%\small{Maria Alberich-Carrami\~nana is also with the}\\
%\small{Institut de Rob\`{o}tica i Inform\`{a}tica Industrial (CSIC-UPC),}\\
%\small{Llorens i Artigues 4-6, 08028 Barcelona, Spain} \\
%\small{e-mail: maria.alberich@upc.edu, gonzalez@math.uni-hannover.de}
%}

%\date{}

%%% ----------------------------------------------------------------------
\maketitle

\begin{abstract}
This paper addresses a very classical topic that goes back at least to Pl\"{u}cker: how to understand a plane curve singularity using its polar curves. Here, we explicitly construct the singular points of a plane curve singularity directly from the weighted cluster of base points of its polars. In particular, we determine the equisingularity class (or topological equivalence class) of a germ of plane curve from the equisingularity class of generic polars and combinatorial data about the non-singular points shared by them.

2010 \emph{Mathematics Subject Classification (MSC2010):} 32S50, 32S15, 14B05.

\noindent \emph{Keywords:} Germ of plane curve, polar, equisingularity, topological equivalence, Enriques Diagram.
\end{abstract}

\section{Introduction}

Polar germs are one of the main tools to analyze plane curve singularities, because they carry very deep analytical information on the singularity (see \cite{MY82}). This holds still true for germs of hypersurfaces or even germs of analytic subsets of $\mathbb{C}^{n}$ (see for instance \cite{Tei77b}, \cite{Tei82}, \cite{MY82}, \cite{LT81}, or \cite{Gaf93}). There have been lots of efforts in the literature with the aim of distinguishing which of this information is in fact purely topological. One of the first steps in solving this problem was settled more than thirty years ago by Teissier in \cite{Tei77b}. There, he introduced the polar invariants, which in the planar case can be defined from the intersection multiplicity of the whole curve $\xi$ with the branches of a generic polar, and he proved that they are topological invariants of $\xi$. This result has been generalized by Maugendre in \cite{Mau} and by Michel in \cite{Mi}, where  the role of polars is played by the Jacobian germs of planar morphisms and finite morphisms from normal surface singularities, respectively. The problem of relating a curve to its polars, and vice versa, is the motivation of lots of classical and recent works. Among these let us quote the works of Teissier \cite{Tei77b,Tei82}, Merle \cite{Mer77}, Kuo and Lu \cite{KL77}, L\^{e} and Teissier \cite{LT81}, Eggers \cite{Eggers}, L\^{e}, Michel and Weber \cite{LMW89,LMW91}, Casas-Alvero \cite{Cas90,Cas93}, Gaffney \cite{Gaf93}, Delgado-de la Mata \cite{Del94}, Garc\'\i a-Barroso \cite{Evelia}, and Garc\'\i a-Barroso and Gonz\'alez-P\'erez \cite{Evelia-Pedro}.

In this work we consider the classical topic of understanding a plane curve singularity $\xi$ using its polar curves.
The study of the contact between a reduced plane curve singularity and its polars goes back at least to Pl\"{u}cker, in 1837, in the framework of proving the global projective Pl\"{u}cker formulae \cite{Pluecker}. This motivated later in 1875 the work of Smith \cite{Smith}, which is considered to be the first in giving local results on the contact between a germ of plane curve and its polars.
The question addressed in this paper of determining a plane curve singularity from its polars implies solving two problems. The first one is to choose the right invariant, entirely computable from the polars, which determines the singular points of $\xi$ (or its topological equivalence class), and this was solved by Casas-Alvero in \cite[Theorem 8.6.4]{Cas00}, in the way we will explain next.
The second problem is to explicitly construct the singular points of $\xi$ from this invariant, which is still open and is the scope of this work.

Regarding the first problem, the above mentioned polar invariants are computable from two polar curves taken in different directions (see Lemma \ref{Lem-Rec-Pol-Inv}), or equivalently from the weighted cluster of base points of the Jacobian system, and they could be a starting point. In fact, Merle showed in \cite{Mer77} that for an irreducible $\xi$ the polar invariants and the multiplicity do determine its equisingularity class. However, this does not hold in general and there are examples of reducible non-equisingular curves with the same multiplicity and the same set of polar invariants (see \cite[Example 6.11.7]{Cas00}). Another possibility could be to consider the topological class (or the singular points) of a generic polar, but it turns out that this analytic invariant carries not enough topological information about the singularity. As Casas-Alvero showed in \cite[Theorem 8.6.4]{Cas00}, one has to consider a slightly sharper invariant: the weighted cluster of base points of the polars of $\xi$, which solves the first problem. Indeed, the underlying cluster consists of the singular points of the generic polars plus the non-singular points shared by generic polars (or by all polars, if we are considering the notion of ``going virtually through a cluster'' of infinitely near points, as it will be explained in Section \ref{ssect-inf}).

The second problem of giving the singular points of $\xi$ from its polars is still open. In fact, Casas-Alvero's proof of Theorem 8.6.4 in \cite{Cas00} is highly non-constructive, and nothing is said about the relation between both objects. Only for an irreducible $\xi$ the answer follows easily from the explicit formulas given by Merle in \cite{Mer77}.

%%%%%%%%%%%%%%%%%%%%%%%%%%%

The aim of this work is to present an algorithm which explicitly recovers the weighted cluster of singular points of a plane curve singularity directly from the base points of its polar germs. Recognizing the difference in difficulty, this could be interpreted as a sort of local version of the known, quite elementary fact in algebraic geometry that the proper singular points of plane projective algebraic curves are exactly the proper base points of its polar curves. In particular, the algorithm applies to describe the equisingularity class of a germ of plane curve (by giving this information combinatorially encoded by means of an Enriques diagram) from the Enriques diagram which encodes the equisingularity class of a generic polar enlarged by some extra vertices representing the simple (non-singular) points shared by generic polars. As we will show, these extra vertices are only relevant for recovering the polar invariants. Once the polar invariants are computed as a previous step in Lemma \ref{Lem-Rec-Pol-Inv}, our procedure shows in which way the equisingularity class (or the singular points) of generic polars determines the equisingularity class of the curves. Furthermore, our approach applies for any pair of polars in different directions, regardless whether they are topologically generic or even transverse ones (see Corollary \ref{Cor-4.11}). As an additional value, our algorithm gives a quite clear and neatly different proof of Casas-Alvero's Theorem 8.6.4 of \cite{Cas00}. We address the problem by reinterpreting it in terms of the theory of planar analytic morphisms, recently developed in \cite{Cas07}, and a careful and ingenious use of these new techniques enables us to construct our new proof.

%%%%%%%%%%%%%%%%%%%%%%%%

Falling on the same stream of recovering the equisingularity class of a germ of plane curve from invariants associated to polars, but starting form a different setting, there are the works by Eggers and by Garc\'{\i}a-Barroso.
In \cite{Eggers}, Eggers proves that the generic polar enriched with the polar invariants corresponding to each of its branches determine the equisingularity (topological) type of the curve. Hence the starting data include some information about the topological type of the curve, and it is crucial to know which polar invariant correspond to each branch of the polar, since the permutation of two polar invariants may give different topological types of curve (as shown in \cite{Eggers} or \cite{Evelia}).
In \cite[Theorem 6.1]{Evelia}, Garc\'{\i}a-Barroso proves that the {\em partial polar invariants} of a plane curve $\xi$ and the multiplicities of its branches determine the equisingularity type of the curve. Partial polar invariants are defined from the intersection multiplicity of each branch of $\xi$ with the branches of a generic polar. Hence, in order to have the partial polar invariants at the beginning, one needs to know some information about the topological type of $\xi$ (the number of branches, their multiplicity, and their intersection with each branch of the polar). Our work, instead, does not take for granted any knowledge of the original curve $\xi$, and its equisingularity type is computed entirely from the polars.

%%%%%%%%%%%%%%%%%%%%%%%%%

This paper is structured as follows. In Section \ref{sect-prel} we give a survey on the tools used all along the work, recalling definitions and facts about infinitely near points, polar germs of singular curves and germs of planar morphisms. We then relate our problem about polar germs to the theory of planar analytic morphisms and close the section with a short sketch of the algorithm giving the solution. Section \ref{sect-tracking} contains the technical results needed to solve the problem, which we believe are interesting on their own. It is divided into two parts. The first part is devoted to the study of the growth of some rational invariants, $I_{\xi}(p)$, associated to the equisingularity class of the curve, independently of its polars. The behaviour of these invariants has been studied by several authors, but always considering only points $p$ lying on $\xi$. However, we need to take into account also points which do not lie on the curve, as well as some refined versions of the known results for points of the curve. Therefore, we have developed some generalizations that, although not particularly surprising, are new and essential for our work. The second part studies the relation between these topological invariants, the values $v_p(\xi)$ of the curve and some invariants, the multiplicities $n_p$ and the heights $m_p$, of the morphism associated to a generic polar. Finally, in Section \ref{sect_rec} we develop the results which build up our algorithm and apply it to a paradigmatic example of Pham and to a more complicated curve with several branches, some of them with more than one characteristic exponent, illustrating how the algorithm works.

\vspace{3mm}
\noindent \textbf{Acknowledgement}
The authors thank F. Dachs-Cadefau for the implementation of the algorithm.

\section{Preliminaries and translation of the problem to a morphism}

\label{sect-prel}

In this section we introduce the notations and concepts needed in the development of the results of this work. We start recalling some notions about infinitely near points, equisingularity of plane germs of curve and base points of linear systems, followed by some results relating them to polar germs. Next we expose a brief review of the theory of planar analytic morphisms developed by Casas-Alvero in \cite{Cas07}, explaining  how our problem fits in that context. The last part of the section is a short overview of the main ideas behind our algorithm to solve the problem. For the sake of brevity, we have kept this section merely descriptive, and the reader is referred, for instance, to \cite[Chapters 3, 4 and 6]{Cas00} and \cite{Cas07} for further details or proofs.

\subsection{Infinitely near points.}

\label{ssect-inf}

From now on, suppose $O$ is a smooth point in a complex surface $S$, and denote by $\mathcal{O} = \mathcal{O}_{S,O}$ the local ring at $O$, i.e. the ring of germs of holomorphic functions in a neighbourhood of $O$. We denote by $\mathcal{N}_O$ the set of points {\em infinitely near to} $O$ (including $O$), which can be viewed as the disjoint union of all exceptional divisors obtained by successive blowing-ups above $O$. The points in $S$ will be called {\em proper} points in order to distinguish them from the infinitely near ones. Given any $p \in \mathcal{N}_O$, we denote by $\pi_p: S_p \longrightarrow S$ the minimal composition of blowing-ups that realizes $p$ as a proper point in a surface $S_p$, and by $E_p$ the exceptional divisor obtained by blowing up $p$ in $S_p$, which is also called its {\em first neighbourhood}. The set $\mathcal{N}_O$ is naturally endowed with and order relation $\leqslant$ defined by $p \leqslant q$ (resp. $p < q$, reading $p$ {\em precedes} $q$) if and only if $q \in \mathcal{N}_p$ (resp. $q \in \mathcal{N}_p - \{ p \}$).

A function $f \in \mathcal{O}$ defines a {\em (germ of) curve} $\xi: f = 0$ at $O$, whose {\em branches} are the germs given by the irreducible factors of $f$. The germ $\xi$ is irreducible if and only if its equation is irreducible. In the sequel, we will implicitly assume that all the curves are {\em reduced} (i.e. they have no multiple branches). The {\em multiplicity} of $\xi$ at $O$, $e_O(\xi)$, is defined to be the order of vanishing of the equation $f$ at $O$. From now on consider that $\xi: f = 0$ is a given curve at $O$. For any $p \in \mathcal{N}_O$ we denote by $\bar{\xi}_p: \pi_p^*f = 0$ its {\em total transform} at $p$, which contains a multiple of the exceptional divisor of $\pi_p$. If we subtract these components we obtain the {\em strict transform} $\widetilde{\xi}_p$, which might be viewed as the closure of $\pi_p^{-1}(\xi - \{O\})$. The {\em multiplicity} and the {\em value} of $\xi$ at $p$ are defined respectively as $e_p(\xi) = e_p(\widetilde{\xi}_p)$ and $v_p(\xi) = e_p(\bar{\xi}_p)$.
We say that $p$ {\em lies on} $\xi$ if and only if $e_p(\xi) > 0$, and we denote by $\mathcal{N}_O(\xi)$ the set of all such points. A point $p \in \mathcal{N}_O(\xi)$ is {\em simple} (resp. {\em multiple}) if and only if $e_p(\xi) = 1$ (resp. $e_p(\xi) > 1$). In the case $\xi$ is irreducible, $\mathcal{N}_O(\xi)$ is totally ordered and the sequence of multiplicities is non-increasing.

Given two germs of curve $\xi,\zeta$ without common components, its intersection multiplicity  at $O$ can be computed by means of {\em Noether's formula} (see \cite[Theorem 3.3.1]{Cas00}) as
\begin{equation} \label{Noether}
[\xi.\zeta]_O = \sum_{p \in \mathcal{N}_O(\xi) \cap \mathcal{N}_O(\zeta)} e_p(\xi) e_p(\zeta).
\end{equation}

Given $p \leqslant q$ points infinitely near to $O$, $q$ is {\em proximate} to $p$ (written $q \rightarrow p$) if and only if $q$ lies on the exceptional divisor $E_p$. A point $p$ is {\em free} (resp. {\em satellite}) if it is proximate to exactly one point (resp. two points), and these are the only possibilities. Note that $q \rightarrow p$ implies $q \geqslant p$, but not conversely.

\begin{df} \label{df_SatelliteOf}
We say that $q$ is {\em satellite of} $p$ (or {\em $p$-satellite}) if $q$ is satellite and $p$ is the last free point preceding $q$ (cf. \cite[Section 3.6]{Cas00}).% We say that $q$ is the {\em first satellite of} $p$, if $q$ is $p$-satellite and lies in the first neighbourhood of $p$.
\end{df}

Proximity allows to establish the {\em proximity equalities}
\begin{equation} \label{proximity}
e_p(\xi) = \sum_{q \rightarrow p} e_q(\xi),
\end{equation}
and the following relation between values and multiplicities
\begin{equation} \label{mult-vals}
v_p(\xi) = e_p(\xi) + \sum_{p \rightarrow q} v_q(\xi).
\end{equation}

A point $p \in \mathcal{N}_O(\xi)$ is {\em singular} (on $\xi$) if it is either multiple, or satellite, or precedes a satellite point $q \in \mathcal{N}_O(\xi)$. Equivalently, $p \in \mathcal{N}_O(\xi)$ is non-singular if and only if it is free and there is no satellite point $q \in \mathcal{N}_O(\xi)$, $q > p$. The set of singular points of $\xi$ weighted by the multiplicities or the values of $\xi$ at them is denoted by $\mathcal{S}(\xi)$. Two curves $\xi, \zeta$ are {\em equisingular} if it exists a bijection $\varphi: \mathcal{S}(\xi) \longrightarrow \mathcal{S}(\zeta)$ (called an {\em equisingularity}) preserving the natural order $\leqslant$, the multiplicities (or values) and the proximity relations. It is known that two such curves are equisingular if and only if they are topologically equivalent in a neighbourhood of $O$ (seen as germs of topological subspaces of $\mathbb{C}^{2}=\mathbb{R}^{4}$). Thus, $\mathcal{S}(\xi)$ determines the topological class of (the embedding of) the curve $\xi$.

The set of singular points of a curve is a special case of a (weighted) cluster. A {\em cluster} is a finite subset $K \subset \mathcal{N}_O$ such that if $p \in K$, then any other point $q < p$ also belongs to $K$. A {\em weighted cluster} $\mathcal{K}=(K,\nu)$ is a cluster $K$ together with a function $\nu: K \longrightarrow \mathbb{Z}$. The number $\nu_p = \nu(p)$ is the {\em virtual multiplicity} of $p$ in $\mathcal{K}$. Two clusters $K,K'$ are {\em similar} if there exists a bijection ({\em similarity}) $\varphi: K \longrightarrow K'$ preserving the ordering and the proximity. In the weighted case we also impose $\varphi$ to preserve the virtual multiplicities.

A cluster can be represented by means of an {\em Enriques diagram} (\cite{EC85a,EC85b}), which is a rooted tree whose vertices are identified with the points in $K$ (the root corresponds to the origin $O$) and there is an edge between $p$ and $q$ if and only if $p$ lies on the first neighbourhood of $q$ or vice-versa. Moreover, the edges are drawn according to the following rules:
\begin{itemize}
\item If $q$ is free, proximate to $p$, the edge joining $p$ and $q$ is curved and if $p \neq O$, it is tangent to the edge ending at $p$.
\item If $p$ and $q$ ($q$ in the first neighbourhood or $p$) have been represented, the rest of points proximate to $p$ in successive neighbourhoods of $q$ are represented on a straight half-line starting at $q$ and orthogonal to the edge ending at $q$.
\end{itemize}
In the weighted case, the vertices are labeled with their virtual multiplicities.

Another usual way to represent a cluster $K$ is the {\em dual graph} of the exceptional divisor of $\pi_K: S_K \longrightarrow S$, the composition of the successive blow-ups of every point in $K$. It is another tree, which has one vertex corresponding to each exceptional curve of $\pi_K$ (and hence, to each point $p \in K$), and two vertices are joint by an edge if and only if the corresponding exceptional curves intersect in $S_K$. It is naturally rooted at the vertex corresponding to $O$, and the choice of this root induces a partial ordering $\prec$ in $K$ (different than the natural ordering $\leq$) that later plays an important role.

Both the Enriques diagram and the dual graph may be used to represent the equisingularity class of a curve $\xi$. One starts with the representation of $\mathcal{S}\left(\xi\right)$, and then one add an edge for each branch $\gamma$ of $\xi$, starting at the vertex corresponding to the last singular point on $\gamma$ and without end. In the Enriques diagram these edges are curved, and in the dual graph they are usually arrows (pointing out of the graph). We will call these graphs {\em augmented} Enriques diagram or dual graph.

A curve $\xi$ goes through $O$ with virtual multiplicity $\nu_O$ if $e_O(\xi) \geq \nu_O$, and in this case the {\em virtual transform} is $\check{\xi} = \bar{\xi} - \nu_O E_O$. This definition can be extended inductively to any point $p \in K$ whenever the multiplicities of the successive virtual transforms are non-smaller than the virtual ones. In this case it is said that $\xi$ {\em goes (virtually) through} the weighted cluster $\mathcal{K}$. If moreover $e_p(\xi) = \nu_p$ for all $p \in K$, it is said that $\xi$ goes through $\mathcal{K}$ {\em with effective multiplicities equal to the virtual ones}. It might happen that there is no curve going through a given weighted cluster with effective multiplicities equal to the virtual ones, but when there exists such a curve the cluster is said to be {\em consistent}. Furthermore, if this is the case, there are curves going through $\mathcal{K}$ with effective multiplicities equal to the virtual ones and missing any finite set of points not in $K$. Equivalently, $\mathcal{K}$ is consistent if and only if $\nu_p \geq \sum_{q \rightarrow p} \nu_q$ for all $p \in K$, which resembles the proximity equalities (\ref{proximity}). In this case, the difference $\rho_p = \nu_p - \sum_{q \rightarrow p} \nu_q$ is the {\em excess} of $\mathcal{K}$ at $p$, and $p$ is {\em dicritical} if and only if $\rho_p > 0$. Finally, we say that $\xi$ goes {\em sharply} through $\mathcal{K}$ if it goes through $\mathcal{K}$ with effective multiplicities equal to the virtual ones and furthermore it has no singular points outside $K$. All germs going sharply through a consistent cluster are reduced and equisingular (cf. \cite[Proposition 4.2.6]{Cas00}), or more generally, germs going sharply through similar consistent clusters are equisingular. Moreover, if $\xi$ goes sharply through $\mathcal{K}$ and $p \in K$, $\xi$ has exactly $\rho_p$ branches going through $p$ and whose point in the first neighbourhood of $p$ is free and does not belong to $K$.

\begin{df} \label{df_Kp}
Given $p \in \mathcal{N}_O$, we denote by $\mathcal{K}(p)$ the (irreducible weighted cluster) consisting of the points $q \leq p$ such that $\rho_p = \nu_p = 1$ and $\rho_q = 0$ for every $q < p$. Thus, germs going sharply through  $\mathcal{K}(p)$ are irreducible, with multiplicity one at $p$, and its (only) point in the first neighbourhood of $p$ is free and non-singular.
\end{df}

Based on Noether's formula, it is possible to define the intersection number of a weighted cluster with a curve, or even two clusters, as
\[[\mathcal{K}.\xi] = [\xi.\mathcal{K}] = \sum_{p \in K} \nu_p e_p(\xi) \qquad
\mbox{ and } \qquad [\mathcal{K}.\mathcal{K}'] = \sum_{p \in K \cap K'} \nu_p \nu_p'.\]
In particular, the self-intersection of a weighted cluster is defined as $\mathcal{K}^2 = \sum_{p \in K} \nu_p^2.$

The main example of weighted cluster is the cluster $BP\left(\mathcal{L}\right)$ of base points of a linear family $\mathcal{L}$ of curves without fixed part (i.e., the curves in $\mathcal{L}$ have no common component). It has multiplicity $\nu_O = \min\{e_O(\xi) \, | \, \xi \in \mathcal{L}\}$ at the origin, and the multiplicities at the infinitely near points are computed inductively considering the virtual transforms of $\xi \in \mathcal{L}$. All germs in $\mathcal{L}$ go virtually through $BP(\mathcal{L})$, and generic ones go sharply through it, miss any fixed finite set of points not in $BP(\mathcal{L})$, and in particular are reduced and have the same equisingularity class. In the particular case $\mathcal{L}$ is a pencil, any two such germs share exactly the points in $BP(\mathcal{L})$, and the self-intersection $BP(\mathcal{L})^2$ coincides with the intersection of two distinct germs in $\mathcal{L}$.

\subsection{Polar germs and its base points.}

In this section we remind the basic definitions and facts about polar germs of curve. We will assume $\xi: f = 0$ is a non-empty, reduced, singular germ of curve at $O$. A {\em polar} of $\xi$ is any germ given by the vanishing of the jacobian determinant
\begin{equation}
\label{def-polar} P_g(f): \frac{\partial(f,g)}{\partial(x,y)} = \left|
\begin{array}{cc}
\frac{\partial f}{\partial x} & \frac{\partial f}{\partial y} \\
\frac{\partial g}{\partial x} & \frac{\partial g}{\partial y}
\end{array}
\right| = 0
\end{equation}
with respect to some local coordinates $(x,y)$ at $O$, where $g$ defines  a smooth germ $\eta$ at $O$. The equation (\ref{def-polar}) actually defines a curve unless $\xi$ is a multiple of $\eta$ (in this case the determinant vanishes identically), which we assume not to hold from now on. We might even suppose that $\eta$ is not a component of $\xi$, since in this case the polar is composed by $\eta$ and the polar of $\xi - \eta$.
A polar is {\em transverse} if the curve $\eta$ is not tangent to $\xi$. The set of polar curves obtained in this way does not depend on the choice of coordinates (\cite[Remark 6.1.1]{Cas00}), but it does actually depend on the equation $f$, and not only on the curve $\xi$ itself (\cite[Remark 6.1.6]{Cas00}). However, this is not a problem because we are interested in intrinsical properties of the polar curves depending only on $\xi$, namely properties of its {\em jacobian ideal}, defined as ${\bf J}(\xi) = \left(f,\frac{\partial f}{\partial x},\frac{\partial f}{\partial y}\right) \subset \mathcal{O}$. This ideal does not depend on the choice of the equation $f$ for $\xi$, and carries very deep information about the singularity of $\xi$. Indeed, it was shown by Mather and Yau in \cite{MY82} that two germs $\xi_1,\xi_2$ are {\em analytically equivalent} if and only if the rings $\mathcal{O}/{\bf J}(\xi_1)$ and $\mathcal{O}/{\bf J}(\xi_2)$ are isomorphic.

The jacobian ideal defines a linear system $\mathcal{J}(\xi)$ called the {\em jacobian system} of $\xi$. Although all the polars belong to the jacobian system, the converse is not true. However, every germ in the jacobian system of multiplicity $e_O(\xi)-1$ is indeed a polar curve. If $\xi$ is reduced and singular, its jacobian ideal is not the whole ring $\mathcal{O}$, its jacobian system is without fixed part, and hence its generic members are reduced and go sharply through its weighted cluster of base points $BP(\mathcal{J}(\xi))$ (hence they are equisingular and, furthermore, they share all their singular points). This motivates the following

\begin{df}
Let $\zeta$ be a polar of a reduced singular curve $\xi$. We say that $\zeta$ {\em is topologically generic} if it goes sharply through $BP(\mathcal{J}(\xi))$.
\end{df}

The weighted cluster $BP(\mathcal{J}(\xi))$ is difficult to compute from its definition, but it can be shown (cf. \cite{Tei77a} and \cite[Corollary 8.5.7]{Cas00}) that it coincides with $BP\left(\frac{\partial f}{\partial x},\frac{\partial f}{\partial y}\right)$, the weighted cluster of base points of the pencil spanned by the partial derivatives of any equation of $\xi$. But base points of pencils are easy to compute (see for instance the algorithm in \cite{Alb2004-CommAlg}).

The cluster $BP(\mathcal{J}(\xi))$ is deeply related to the cluster of singular points of $\xi$. As a first result, it contains all the free singular points of $\xi$ (\cite[Lemma 8.6.3]{Cas00}), but the most striking result is the following

\begin{thm}(\cite[Theorem 8.6.4]{Cas00}) \label{Casas-8.6.4}
Let $\xi_1$ and $\xi_2$ be germs of curve, both reduced and singular. Then
\begin{enumerate}
\item If $BP(\mathcal{J}(\xi_1)) = BP(\mathcal{J}(\xi_2))$, then $\mathcal{S}(\xi_1) = \mathcal{S}(\xi_2)$.
\item If $BP(\mathcal{J}(\xi_1))$ and $BP(\mathcal{J}(\xi_2))$ are similar weighted clusters, then $\xi_1$ and $\xi_2$ are equisingular.
\end{enumerate}
\end{thm}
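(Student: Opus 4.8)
The plan is to prove the following sharper, constructive statement, from which both items follow: there is an algorithm $\caA$ which, given the weighted cluster $BP(\caJ(\xi))$ of a reduced singular germ $\xi$, returns the weighted cluster $\caS(\xi)$ of its singular points, and which is equivariant under similarities of weighted clusters. Granting this, item (1) is the case of equal inputs; for item (2), a similarity $BP(\caJ(\xi_1))\to BP(\caJ(\xi_2))$ is transported by $\caA$ to a similarity $\caS(\xi_1)\to\caS(\xi_2)$, that is, to an equisingularity between $\xi_1$ and $\xi_2$. Thus everything reduces to reconstructing $\caS(\xi)$ from $BP(\caJ(\xi))$; and since $BP(\caJ(\xi))=BP\left(\frac{\partial f}{\partial x},\frac{\partial f}{\partial y}\right)$, we may work with the polar pencil, whose generic members are topologically generic polars, go sharply through $BP(\caJ(\xi))$, and therefore let $BP(\caJ(\xi))$ record the effective multiplicities $e_p(\zeta)$ of a generic polar $\zeta$ wherever $\zeta$ is singular.

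The first step is to realize a generic polar as the critical curve of a planar morphism and bring in the theory of \cite{Cas07}. Fix a generic smooth germ $\eta:g=0$ transverse to $\xi$ and set $\Phi=(f,g):(S,O)\longra(\mathbb{C}^{2},0)$; then $\zeta=P_g(f)$ is exactly the jacobian curve of $\Phi$. The theory of planar morphisms attaches to $\Phi$ integer invariants at every $p\in\caN_O$ --- the multiplicity $n_p$ and the height $m_p$ of $\Phi$ at $p$ --- and expresses the base points of the polar family, hence $BP(\caJ(\xi))$, through explicit formulas in the $n_p$ and $m_p$. The reason for passing to $\Phi$ is that, unlike the polar curve itself, these morphism invariants obey clean recursions along the blowing-up tree.

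The technical heart of the argument is a family of rational invariants $I_\xi(p)$, $p\in\caN_O(\xi)$, depending only on the equisingularity class of $\xi$ (built from ratios of the values $v_q(\xi)$ along the branch through $p$, normalized so as to be insensitive to the choice of $\eta$). First one establishes, with no reference to polars, the growth of $p\mapsto I_\xi(p)$: it is weakly monotone between consecutive free points of a branch, constant precisely along the non-singular stretches, and the positions and sizes of its jumps detect the singular points of $\xi$ and pin down $v_p(\xi)$ there. Second one proves the dictionary relating $I_\xi(p)$, the value $v_p(\xi)$ and the morphism invariants $n_p,m_p$; combined with the proximity relations \eqref{proximity}, \eqref{mult-vals} and Noether's formula \eqref{Noether}, this lets $n_p$ and $m_p$ be read off $BP(\caJ(\xi))$. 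As a preliminary step one recovers from $BP(\caJ(\xi))$ the polar invariants of $\xi$ (Lemma \ref{Lem-Rec-Pol-Inv}); these provide the global normalization needed to start the recursion, and it is precisely here that the simple points shared by generic polars are used.

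With these tools in hand the reconstruction runs top-down along the Enriques diagram of $BP(\caJ(\xi))$: starting from $O$, where $e_O(\xi)$ is recovered from the virtual multiplicity of the root, one inductively determines, point by point, which points lie on $\xi$, their multiplicities and values and their proximities, by matching the predicted growth of $I_\xi$ (equivalently, the predicted $n_p,m_p$) against what $BP(\caJ(\xi))$ actually exhibits; each time $\xi$ has a singular point that is satellite or precedes a satellite --- hence possibly not a base point of $\caJ(\xi)$ --- the invariants force its presence, position and value, and it is inserted into the output cluster. I expect this last point to be the main obstacle: $BP(\caJ(\xi))$ genuinely fails to contain $\caS(\xi)$ in general, so the proof cannot merely prune the diagram but must \emph{create} the missing satellite singular points from the arithmetic of the invariants, and the delicate part is showing that the growth of $I_\xi$ together with the relations tying $I_\xi$, $v_p$, $n_p$ and $m_p$ are rigid enough to do this unambiguously and compatibly with similarity, while correctly handling the dicritical points of the polar pencil and the branches of $\zeta$ that are not branches of $\xi$.
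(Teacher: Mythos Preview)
Your proposal is correct and follows essentially the same strategy as the paper: reinterpret the polar as the jacobian of the morphism $\varphi=(f,g)$, compute the invariants $n_p,m_p$ from $BP(\caJ(\xi))$ via Lemma~\ref{calcul-n_p-m_p}, study the growth of the invariant quotients $I_\xi(p)$ and their relation to $m_p/n_p$, recover the polar invariants first (Lemma~\ref{Lem-Rec-Pol-Inv}), and then reconstruct $\caS(\xi)$ algorithmically in a similarity-equivariant way.

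One organizational point is worth sharpening. Your sketch describes the reconstruction as a ``top-down'' induction along the Enriques diagram of $BP(\caJ(\xi))$, deciding point by point which points lie on $\xi$. The paper does not proceed this way: instead it is driven by the \emph{dicritical} points $d$ of $BP(\caJ(\xi))$. For each such $d$ one computes the polar invariant $I_\xi(d)$, locates the last free point $p_d$ on $\xi$ preceding $d$ via the inequality $m_{p'}/n_{p'}<I_\xi(d)$ (Proposition~\ref{Prop-p}), and then runs a binary search among the $p_d$-satellite points, comparing $m_q/n_q$ with $I_\xi(d)$, to pin down the unique rupture point $q_d$ with $m_{q_d}/n_{q_d}=I_\xi(d)$ (Theorem~\ref{Thm-Calcul-q_gamma}). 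The set of rupture points so obtained determines the underlying cluster of $\caS(\xi)$, and the values are then recovered separately (Propositions~\ref{Pro-value-free} and~\ref{Pro-value-satellite}). This dicritical-to-rupture correspondence (Proposition~\ref{Prop-p-gamma}) is the mechanism that ``creates'' the missing satellite singular points you correctly flag as the main obstacle; a naive top-down sweep would not suffice, because at a free point $p\in BP(\caJ(\xi))$ the data do not directly tell you whether $p$ lies on $\xi$ --- it is the comparison $m_q/n_q$ versus $I_\xi(d)$, together with Theorem~\ref{Thm-values-versus-alcades} distinguishing $v_p(\xi)=m_p$ from $v_p(\xi)<m_p$, that resolves the ambiguity.
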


The proof of Casas-Alvero works in two steps. The first one is to recover the polar invariants (which will be introduced below), and the second step is a procedure involving a careful tracking of the Newton polygon of the iterated strict transforms of a generic polar under blowing up. However, the major drawback of this proof is that it throws no light on the connection between the singular points of both objects: germ of curve and generic polars.

Our aim is to give a precise description of the relation between the singular points of the curve and those of its generic polars. This will provide a new alternative proof of Theorem \ref{Casas-8.6.4}. As a previous step we will also recover the polar invariants, but in contrast, our algorithm will give a different proof of the second step, avoiding the use of the Newton polygon and the tracking of the polars after successive blowing-ups.

A classical tool to study the relation between a germ and its polar curves are the polar invariants. These invariants were introduced by Teissier in \cite{Tei77b}, where he proved that they are topological invariants of $\xi$ closely related to its (transverse) polar curves. A point $p \in \mathcal{N}_O(\xi)$ is a {\em rupture point} of $\xi$ if either there are at least two free points on $\xi$ in its first neighbourhood, or $p$ is satellite and there is at least one free point on $\xi$ in its first neighbourhood. Equivalently, $p$ is a rupture point if and only if the total transform $\bar{\xi}_p$ has three different tangents. In the augmented dual graph of $\mathcal{S}\left(\xi\right)$, rupture points correspond to vertices with three or more incident edges (counting the arrows). We denote by $\mathcal{R}(\xi)$ the set of rupture points of $\xi$. More generally, if $p \in \mathcal{N}_O$ is a free point, $\mathcal{R}^p(\xi)$ denotes the subset of rupture points of $\xi$ which are either equal to $p$ or $p$-satellite. Note that all rupture points are singular, and also all maximal singular points are rupture points.

For any $p \in \mathcal{N}_O$, take $\gamma^p$ to be any irreducible germ of curve going through $p$ and whose point in the first neighbourhood of $p$ is free and does not lie on $\xi$, and define the rational number \begin{equation} \label{def-rational-inv} I(p) = I_{\xi}(p) = \frac{[\xi.\gamma^p]}{e_O(\gamma^p)} = \frac{[\xi.\mathcal{K}(p)]}{\nu_O(\mathcal{K}(p))}, \end{equation} which is independent of the choice  of $\gamma^p$ and will be called \emph{invariant quotient at} $p$. The {\em polar invariants} of $\xi$ are the invariant quotients $I(q)$ at the rupture points $q \in \mathcal{R}(\xi)$. Note that they (as well as the invariant quotients) can be computed from an Enriques diagram of $\xi$, and hence are topological invariants of $\xi$. In fact, it was shown by Merle in \cite{Mer77} that if $\xi$ is irreducible, its equisingularity class is determined by its multiplicity at $O$ and by its polar invariants. Polar invariants have an interesting topological meaning which was given by L\^{e}, Michel and Weber in \cite{LMW91}.

We have defined the polar invariants without any mention to polar germs. Its relation to polar germs is given by the next

\begin{pro}(\cite[Theorems 6.11.5 and 6.11.8]{Cas00}) \label{pro-polar-inv}
Let $\zeta = P_g(\xi)$ be a transverse polar of a non-empty reduced germ of curve $\xi$, and let $\gamma_1, \ldots, \gamma_l$ be the branches of $\zeta$. Then \[\left\{\frac{[\xi.\gamma_i]}{e_O(\gamma_i)}\right\}_{i=1,\ldots,l} = \left\{I(q)\right\}_{q \in \mathcal{R}(\xi)}.\] Furthermore, if $p \in \mathcal{N}_O(\xi)$ is either $O$ or any free point lying on $\xi$, the set of quotients $\frac{[\xi.\gamma]}{e_O(\gamma)}$, for $\gamma$ a branch of $\zeta$ going through $p$ and missing all free points on $\xi$ after $p$, is just $\left\{I(q)\right\}_{q \in \mathcal{R}^p(\xi)}$.
\end{pro}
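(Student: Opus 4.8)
The plan is to reduce the statement to the case of an irreducible germ $\xi$ and then combine Noether's formula with the description of the base points of the jacobian system. First I would fix local coordinates $(x,y)$ so that $g = x$, so that $\zeta = P_g(\xi)$ is the curve $\partial f/\partial y = 0$; by the transversality hypothesis the line $x = 0$ is not tangent to $\xi$, which guarantees that the multiplicity of $\zeta$ at $O$ is exactly $e_O(\xi)-1$ and that each branch $\gamma_i$ of $\zeta$ is transverse or, more precisely, that the relevant intersection computations with $\xi$ are ``as expected''. The key algebraic input is the classical relation between the polar and the jacobian ideal: $\zeta$ is a member of $\caJ(\xi)$ of minimal multiplicity, hence (being transverse) it goes sharply through $BP(\caJ(\xi))$, and moreover $BP(\caJ(\xi)) = BP(\partial f/\partial x, \partial f/\partial y)$. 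So the branches of $\zeta$ are exactly the branches of a generic member of the jacobian pencil, and their behaviour with respect to $\xi$ is governed by the cluster $BP(\caJ(\xi))$.

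The heart of the argument is then a branch-by-branch analysis. Given a branch $\gamma$ of $\zeta$, let $p$ be the \emph{last} point on $\gamma$ that lies on $\xi$ and is free on $\xi$ (equivalently, the last point where $\gamma$ and $\xi$ still ``share a free direction''); I claim that $p$ must be either $O$ or a free point on $\xi$, that $p$ has a rupture point of $\xi$ in $\caR^p(\xi)$ associated to it, and that $[\xi.\gamma]/e_O(\gamma) = I_\xi(q)$ for the corresponding $q \in \caR^p(\xi)$. To see the equality, use Noether's formula $[\xi.\gamma] = \sum_{r} e_r(\xi)e_r(\gamma)$: the sum runs over the common points of $\xi$ and $\gamma$, and because $\gamma$ leaves $\xi$ at $p$ in a free direction not on $\xi$, the points $r \leqslant p$ contributing are exactly those of $\caK(q)$ for the rupture point $q$ ``above'' $p$, while the normalization $e_O(\gamma) = \nu_O(\caK(q))$ matches the denominator in \eqref{def-rational-inv}. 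The fact that every branch of $\zeta$ \emph{does} pass through some free point of $\xi$ and splits off there into a non-singular direction is precisely the content of $\zeta$ going sharply through $BP(\caJ(\xi))$ together with the structure theorem for sharp germs (the last clause of the paragraph on sharpness), and the fact that the resulting quotient only depends on the rupture point $q$ is \eqref{def-rational-inv} itself.

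To get the two displayed equalities I would argue both inclusions. For $\supseteq$: given $q \in \caR(\xi)$, construct explicitly a branch of some member of the jacobian pencil passing through $q$ and leaving $\xi$ in a free non-singular direction — this is where one uses that $q$ being a rupture point forces an excess at the appropriate place in $BP(\caJ(\xi))$, so that a sharp generic polar has a branch through $q$ of the required shape; then the previous paragraph gives $[\xi.\gamma]/e_O(\gamma) = I_\xi(q)$. For $\subseteq$: the previous paragraph already assigns to each branch $\gamma$ a rupture point. The refined statement with a fixed free $p \in \caN_O(\xi)$ follows by simply restricting this bijData to branches passing through $p$ and missing free points of $\xi$ after $p$; by construction these are exactly the branches whose associated rupture point lies in $\caR^p(\xi)$. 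Since all of this is already essentially \cite[Theorems 6.11.5 and 6.11.8]{Cas00}, at this level of detail I would rather cite those results; the point of restating it here is to have \eqref{def-rational-inv} and the sharpness of transverse polars in the exact form needed later.

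The main obstacle is the bookkeeping at satellite rupture points: when $q$ is $p$-satellite rather than equal to $p$, the branch $\gamma$ of $\zeta$ accounting for $I_\xi(q)$ does not simply ``stop being on $\xi$'' at a free point of $\xi$ but follows $\xi$ through a chain of satellite points past $p$ before separating, and one must check that Noether's formula applied to $\gamma$ reproduces exactly the numerator $[\xi.\caK(q)]$ and not some larger sum — i.e. that $\gamma$ does not stay on $\xi$ longer than $\caK(q)$ prescribes. This is controlled by the proximity inequalities for the consistent cluster $BP(\caJ(\xi))$ (the excess $\rho$ being positive exactly at the dicritical points, which correspond to the rupture points of $\xi$), so the verification is a finite combinatorial check on Enriques diagrams rather than anything deep, but it is the step where care is genuinely required.
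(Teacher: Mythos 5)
First, a framing remark. The paper does not prove this proposition; it is quoted verbatim from \cite[Theorems 6.11.5 and 6.11.8]{Cas00}, and the preliminaries section explicitly defers all such proofs to that reference. You acknowledge this, so your proposal should be judged as a sketch of Casas-Alvero's argument rather than as something the paper itself supplies.

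Read that way, the sketch has a genuine gap at its very first step, and it propagates through the whole outline. You write that $\zeta$, ``being transverse, goes sharply through $BP(\caJ(\xi))$,'' and later refer to ``the sharpness of transverse polars.'' This conflates two notions the paper keeps carefully apart: a polar is \emph{transverse} when the smooth germ $\eta:g=0$ is not tangent to $\xi$, whereas it is \emph{topologically generic} when it goes sharply through $BP(\caJ(\xi))$. The latter is strictly stronger and is a separate, Zariski-open condition; the paper introduces it as its own Definition precisely because transverse polars need not satisfy it (otherwise the phrase ``topologically generic transverse polar,'' used repeatedly in Section~4, would be redundant). The proposition, however, is asserted for \emph{every} transverse polar. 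Your branch-by-branch analysis --- ``every branch of $\zeta$ passes through some free point of $\xi$ and splits off there into a non-singular direction'' --- is justified by sharpness, not by transversality alone. For a transverse but non-generic polar, a branch of $\zeta$ could a priori share extra infinitely near points with $\xi$, or $\zeta$ could miss a dicritical branch of $BP(\caJ(\xi))$; nothing in your argument excludes this, so both inclusions $\subseteq$ and $\supseteq$ are left open in exactly the case the statement insists on. Proving the polar quotients are the same set $\{I(q)\}_{q\in\caR(\xi)}$ for \emph{all} transverse choices of $\eta$ is the real content of Teissier's and Casas-Alvero's theorems, and it does not reduce to the structure of a sharp germ. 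As a secondary point, the normalization $e_O(\gamma)=\nu_O(\caK(q))$ you invoke is wrong in general: the branch $\gamma$ typically goes past the rupture point $q$ (all the way to a dicritical point $d$ of $BP(\caJ(\xi))$), so $e_O(\gamma)=\nu_O(\caK(d))>\nu_O(\caK(q))$; the equality of quotients $[\xi.\gamma]/e_O(\gamma)=I_\xi(q)$ comes instead from the intersection formula \eqref{eq3}, which shows the ratio is insensitive to where $\gamma$ goes after separating from $\xi$, not from matching denominators. As written, then, your sketch establishes the assertion only for topologically generic polars; recovering the full statement requires an additional independence-of-$\eta$ argument that your proposal does not contain.
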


\subsection{Planar analytic morphisms.}

We end the preliminary material summarizing some definitions and results concerning germs of morphisms between surfaces which will be used along the paper. We now consider two points $O \in S$, $O' \in T$ lying on two smooth surfaces. A {\em germ of morphism} of surfaces at them is a morphism $\varphi: U \longrightarrow V$ defined on some neighbourhoods of $O$ and $O'$, such that $\varphi(O) = O'$. We will assume that the morphism is dominant, i.e. its image is not contained in any curve through $O'$, or equivalently the pull-back morphism $\varphi^*: \mathcal{O}_{T,O'} \longrightarrow \mathcal{O}_{S,O}$ is a monomorphism. Since the surfaces are smooth, we can attach two systems of coordinates $(x,y)$ and $(u,v)$ centered at $O$ and $O'$ respectively, obtaining isomorphisms $\mathcal{O}_{S,O} \cong \mathbb{C}\{x,y\}$ and $\mathcal{O}_{T,O'} \cong \mathbb{C}\{u,v\}$. Under this isomorphisms, we denote by $\hat{h} \in \mathbb{C}[x,y]$ the {\em initial form} of any $h \in \mathcal{O}_{S,O}$, and by $o_O(h) = \deg \hat{h}$ its order (and analogously for $h' \in \mathcal{O}_{T,O'}$).

The pull-back of germs at $O'$ is defined by pulling back equations,  and the push-forward, or direct image, of germs at $O$ is defined on irreducible germs and then extended by linearity. For an irreducible germ $\gamma$ at $O$ its push-forward $\varphi_*(\gamma)$ is defined as the image curve $\sigma = \varphi(\gamma)$ counted with multiplicity equal to the degree of the restriction $\varphi_{\gamma}: \gamma \rightarrow \sigma$. With this definitions, it holds the {\em projection formula}
\begin{equation} \label{proj-form}
[\xi.\varphi^*(\zeta)]_O = [\varphi_*(\xi).\zeta]_{O'}
\end{equation}
for all germs of curve $\xi$ at $O$ and $\zeta$ at $O'$.

Let $(f(x,y),g(x,y))$ be the expression of $\varphi$ in  the coordinates fixed above. The {\em multiplicity} of $\varphi$ is defined as $e_O(\varphi) = n = n_O = \min\{o_O(f),o_O(g)\}$. Consider now the pencil $\mathcal{P} = \{\lambda f + \mu g = 0\}$. Its fixed part $\Phi$ is the {\em contracted germ} of $\varphi$, defined by $h = \gcd(f,g)$. If both $\frac{f}{h}$ and $\frac{g}{h}$ are non-invertible, the variable part $\mathcal{P}'$ is a pencil without fixed part whose cluster of base points is by definition the {\em cluster of base points} of $\varphi$, denoted $BP(\varphi)$. The {\em multiplicity} $e_p(\varphi)$ of $\varphi$ at any point $p \in \mathcal{N}_O$ infinitely near to $O$ is defined as the sum of $e_p(\Phi)$ and the virtual multiplicity of $BP(\varphi)$ at $p$. A point $p$ is {\em fundamental} of $\varphi$ if $e_p(\varphi) > 0$. The multiplicity can alternatively be extended to any $p \in \mathcal{N}_O$ as the multiplicity of the composition $\varphi_p = \varphi \circ \pi_p$, which is denoted by $e(\varphi_p)$ or $n_p$ if the morphism is clear from the context. These two possible generalizations of the notion of multiplicity correspond respectively to the multiplicities and the values of a curve at a point. Indeed, they verify the following formula (see \cite[Proposition 13.1]{Cas07})
\begin{equation}
\label{mult-val-morph} e(\varphi_p) = e_p(\varphi) + \sum_{p \rightarrow q} e(\varphi_q).
\end{equation}

So far we have attached to $\varphi$ a weighted cluster of points infinitely near to $O$. There is a natural way to construct a weighted cluster of points at $O'$: the trunk of $\varphi$. Let $\mathcal{L} = \{l_{\alpha} : \alpha \in \mathbb{P}_{\mathbb{C}}^1\}$ be a pencil of lines at $O$, and consider its direct images $\{\gamma_{\alpha} = \varphi_*(l_{\alpha})\}$. All but finitely many of them may be parametrized as
\[(u(t),v(t)) = (t^n, \sum_{i \geq n} a_i t^i)\]
where $n = e_O(\varphi)$ and the $a_i$ may depend on $\alpha$. Indeed, since $\varphi$ is supposed to be dominant, at least one of them will depend on $\alpha$. Since the coefficients of a Puiseux series determine the position of the points (cf. \cite[Chapter 5]{Cas00}), all but finitely many of the $\gamma_{\alpha}$ share a finite number of points with the same multiplicities. This weighted cluster is independent of the choice of the pencil of lines $\mathcal{L}$, it is denoted by $\mathcal{T} = \mathcal{T}(\varphi)$, and it is called the {\em (main) trunk} of $\varphi$. The smallest integer $m = m_O$ such that $a_m$ is not constant is the {\em height} of the trunk. These definitions can be extended to any $p \in \mathcal{N}_O$ by considering the morphism $\varphi_p$ instead of $\varphi$. In \cite[Section 10]{Cas07} it is developed an algorithm to compute the trunk of any morphism from its expression in coordinates.

The last concept we want to recall is the {\em jacobian germ} or $\varphi$. It is defined as the germ
\[{\bf J}(\varphi) : \frac{\partial(f,g)}{\partial(x,y)} = \left|
\begin{array}{cc}
\frac{\partial f}{\partial x} & \frac{\partial f}{\partial y} \\
\frac{\partial g}{\partial x} & \frac{\partial g}{\partial y}
\end{array}
\right| = 0,\]
which is a germ of curve at $O$ (the determinant does not vanish  identically because $\varphi$ is dominant). Note that when $g$ defines a smooth germ, the jacobian germ is a polar of $\xi: f = 0$. One of the main results of \cite{Cas07} gives an explicit formula to compute the multiplicities of the jacobian germ from the multiplicities and the heights of the trunks of the composites $\varphi_p$:

\begin{pro}(\cite[Theorem 14.1]{Cas07}) \label{pro-mult-jac}
For any point $p \in \mathcal{N}$, we have
\begin{equation} \label{form-mult-jac}
e_p({\bf J}(\varphi)) =
\begin{cases}
m+n-2 & \mbox{ if $p = O$,} \\
m_p+n_p-m_{p'}-n_{p'}-1 & \mbox{ if $p$ is free, proximate to $p'$,} \\
m_p+n_p-m_{p'}-n_{p'}-m_{p''}-n_{p''} & \mbox{ if $p$ is satellite,  prox. to $p'$ and $p''$.}
\end{cases}
\end{equation}
\end{pro}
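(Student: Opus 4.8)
The plan is to settle the case $p=O$ by a direct computation with the standard parametrization of the trunk, and then to obtain the general recursive formula from it by tracking how jacobian germs behave under composition with a blowing‑up.

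\emph{Reduction to the case $p=O$.} I would start from the chain‑rule identity for the composite $\varphi_p=\varphi\circ\pi_p$: in local coordinates, taking determinants of the product of Jacobian matrices gives $\mathbf{J}(\varphi_p)=\bigl(\mathbf{J}(\varphi)\circ\pi_p\bigr)\cdot\mathbf{J}(\pi_p)$ as functions on $S_p$, hence
\[
\operatorname{div}\bigl(\mathbf{J}(\varphi_p)\bigr)=\pi_p^{*}\operatorname{div}\bigl(\mathbf{J}(\varphi)\bigr)+\operatorname{div}\bigl(\mathbf{J}(\pi_p)\bigr).
\]
Since $\pi_p$ is a composition of point blowing‑ups and the jacobian of a single point blowing‑up vanishes with multiplicity one along its exceptional divisor, a straightforward induction shows $\operatorname{div}(\mathbf{J}(\pi_p))=\sum_{q<p}\kappa_q E_q$ where the integers $\kappa_q$ obey the proximity recursion $\kappa_q=1+\sum_{q\ra r}\kappa_r$ (so $\kappa_O=1$). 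Taking multiplicities at the proper point $p\in S_p$, using that $p$ lies on $E_q$ exactly when $p\ra q$ and that the multiplicity at $p$ of a total transform is the value $v_p$, one gets $e_p(\mathbf{J}(\varphi_p))=v_p(\mathbf{J}(\varphi))+\sum_{p\ra r}\kappa_r$. Now $\varphi_p$ is again a dominant germ of morphism taking the smooth point $p$ to $O'$, so the case $p=O$ (which is a statement about arbitrary such morphisms) applied to $\varphi_p$ yields $e_p(\mathbf{J}(\varphi_p))=m_p+n_p-2$, whence $v_p(\mathbf{J}(\varphi))=m_p+n_p-2-\sum_{p\ra r}\kappa_r$ for every $p$. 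Plugging this into the value–multiplicity relation $e_p=v_p-\sum_{p\ra q}v_q$ of formula (\ref{mult-vals}) and simplifying with $\sum_{p\ra q}\kappa_q=\kappa_p-1$, all the $\kappa$'s cancel and one is left with
\[
e_p(\mathbf{J}(\varphi))=m_p+n_p-2-\sum_{p\ra q}(m_q+n_q)+\#\{q:p\ra q\},
\]
which is precisely the three‑line formula of the statement, the correction $+1$ (resp. $+2$) coming from the number of points $p$ is proximate to.

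\emph{The base case.} Here I would choose coordinates $(u,v)$ at $O'$ so that the first component $f$ of $\varphi=(f,g)$ has order $n=e_O(\varphi)$ (always possible, and then automatically $o_O(f)\le o_O(g)$, which is what the standard form of the trunk requires), and coordinates $(x,y)$ at $O$ with the pencil of lines written $l_\alpha:y=\alpha x$. The map $\Psi(t,\alpha)=(t,\alpha t)$ is a chart of the blowing‑up of $O$, with $\mathbf{J}(\Psi)=t$; writing $F(t,\alpha)=f(t,\alpha t)$ and $G(t,\alpha)=g(t,\alpha t)$, this gives $\mathbf{J}(\varphi)(t,\alpha t)=t^{-1}\,\partial(F,G)/\partial(t,\alpha)$. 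For generic $\alpha$ the series $F$ has order $n$ in $t$ with unit leading coefficient, so there is a reparametrization $s=s(t,\alpha)$, invertible in $t$ and analytic in $\alpha$, with $F=s^{n}$; then $G=h(s,\alpha)=\sum_{i\geq n}a_i(\alpha)s^{i}$ and $(s^{n},h(s,\alpha))$ is a standard‑form parametrization of the family $\{\gamma_\alpha=\varphi_*(l_\alpha)\}$, so that the height $m$ is the least index with $a_m$ non‑constant. Expanding the $2\times2$ determinant with $F=s^{n}$ gives $\partial(F,G)/\partial(t,\alpha)=n\,s^{n-1}\,h_\alpha(s,\alpha)\,s_t$; at a generic $\alpha_0$ its order in $t$ is $(n-1)+m+0$, since $s_t$ is a unit, $o_t(s)=1$, and $o_s(h_\alpha)=m$ (the $a_i$ with $i<m$ are constant while $a_m'(\alpha_0)\neq0$ generically). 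Hence $\mathbf{J}(\varphi)$ restricted to $l_{\alpha_0}$ has order $m+n-2$, and since a generic line is transverse to every branch of the curve $\mathbf{J}(\varphi)$ this order equals $[\mathbf{J}(\varphi).l_{\alpha_0}]_O=e_O(\mathbf{J}(\varphi))$, proving the case $p=O$.

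\emph{Where the difficulty lies.} The reduction step is essentially bookkeeping with the proximity inequalities together with the standard relative‑canonical‑divisor computation, so I expect the real obstacle to be the base case. There one must make sure the choice of target coordinates and of the generic line are compatible with the reparametrization — in particular that $o_t F=n$ holds throughout a neighbourhood of the generic $\alpha_0$ — and that the integer $m$ read off from $h_\alpha$ is indeed the intrinsic height of the trunk; one also has to check that the case in which $\varphi$ restricted to the generic line has degree $>1$ onto its image does not affect the argument, which it does not, because the trunk is defined using the exponent $n=e_O(\varphi)$ rather than a primitive parameter.
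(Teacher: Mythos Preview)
The paper does not prove this proposition; it is quoted from \cite{Cas07} and used as a black box, so there is no ``paper's own proof'' to compare against. Your argument is therefore to be judged on its own merits, and it is sound.

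The reduction step is correct: the chain rule gives $\mathbf{J}(\varphi_p)=(\mathbf{J}(\varphi)\circ\pi_p)\cdot\mathbf{J}(\pi_p)$, the relative-canonical computation yields $\operatorname{div}(\mathbf{J}(\pi_p))=\sum_{q<p}\kappa_qE_q$ with the recursion $\kappa_q=1+\sum_{q\ra r}\kappa_r$, and your cancellation of the $\kappa_q$'s via $\sum_{q\ra r}\kappa_r=\kappa_q-1$ checks out line by line, producing exactly the three cases of the statement. The base case is also fine: the reparametrization $F=s^n$ is available because $o_t F(t,\alpha)=n$ for generic $\alpha$ (your choice of target coordinate $u$ guarantees $o_O(f)=n$, so only the finitely many tangent directions of $\{f=0\}$ are excluded), the determinant collapses to $ns^{n-1}s_th_\alpha$, and the identification $o_s(h_\alpha)=m$ at a generic $\alpha_0$ is exactly the definition of the height once one notes that the $a_i(\alpha)$ are analytic locally in $\alpha$ and that $a_m$ non-constant forces $a_m'(\alpha_0)\neq0$ on a dense set. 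Your closing remark about the possible degree $>1$ of $\varphi|_{l_\alpha}$ onto its image is to the point: the trunk is defined using $n=e_O(\varphi)$, not a primitive parameter, so the computation is unaffected.

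One small stylistic point: in the reduction you apply the base case to $\varphi_p$ at the proper point $p\in S_p$, which requires the base case to hold for an \emph{arbitrary} dominant planar germ, not just the original $\varphi$. You do flag this (``which is a statement about arbitrary such morphisms''), but it would be cleaner to state and prove the $p=O$ case first, as a lemma about any dominant $\psi:(S,O)\to(\mathbb{C}^2,0)$, before invoking it for each $\varphi_p$.
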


In particular, we will use the following

\begin{cor}(\cite[Corollary 14.4]{Cas07}) \label{Assian-14.4}
If $p$ is a non-fundamental point of $\varphi$,  then $m_p = m_{p'} + e_p({\bf J}(\varphi)) + 1$ if $p$ is free proximate to $p'$, and $m_p = m_{p'} + m_{p''} + e_p({\bf J}(\varphi))$ if $p$ is satellite proximate to $p'$ and $p''$. In any case, $m_p > m_{p'}$.
\end{cor}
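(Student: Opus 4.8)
The plan is to obtain both identities by substituting directly into the multiplicity formula for the jacobian germ in Proposition \ref{pro-mult-jac}, after extracting the one piece of information the hypothesis supplies. Being non-fundamental for $\varphi$ means, by definition, that $e_p(\varphi) = 0$. Feeding this into relation (\ref{mult-val-morph}), which reads $n_p = e(\varphi_p) = e_p(\varphi) + \sum_{p \ra q} e(\varphi_q)$, and recalling that a free point is proximate to exactly one point while a satellite point is proximate to exactly two, I get $n_p = n_{p'}$ when $p$ is free proximate to $p'$, and $n_p = n_{p'} + n_{p''}$ when $p$ is satellite proximate to $p'$ and $p''$.

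Next I plug these into the matching cases of (\ref{form-mult-jac}). In the free case, $e_p({\bf J}(\varphi)) = m_p + n_p - m_{p'} - n_{p'} - 1 = m_p - m_{p'} - 1$, which rearranges to $m_p = m_{p'} + e_p({\bf J}(\varphi)) + 1$. In the satellite case, $e_p({\bf J}(\varphi)) = m_p + n_p - m_{p'} - n_{p'} - m_{p''} - n_{p''} = m_p - m_{p'} - m_{p''}$, which rearranges to $m_p = m_{p'} + m_{p''} + e_p({\bf J}(\varphi))$. This establishes the two displayed formulas.

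For the final inequality $m_p > m_{p'}$, I use that ${\bf J}(\varphi)$ is a genuine germ of curve, so $e_p({\bf J}(\varphi)) \ge 0$: in the free case this already gives $m_p = m_{p'} + e_p({\bf J}(\varphi)) + 1 \ge m_{p'} + 1 > m_{p'}$. In the satellite case I additionally invoke the elementary fact that every trunk has height at least one: indeed $m_q \ge n_q = e(\varphi_q) \ge 1$, since $\varphi_q$ is dominant and in the parametrization $(t^{n_q}, \sum_{i \ge n_q} a_i t^i)$ of the generic direct image of a line no coefficient can vary below the index $n_q$; hence $m_p = m_{p'} + m_{p''} + e_p({\bf J}(\varphi)) \ge m_{p'} + 1 > m_{p'}$. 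I do not expect any serious obstacle here: the argument is essentially a one-line substitution into Proposition \ref{pro-mult-jac}, and the only step worth a moment's care is the auxiliary bound $m_{p''} \ge 1$, without which the satellite identity would not upgrade to a strict inequality.
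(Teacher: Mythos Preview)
Your argument is correct. The paper does not actually prove this corollary; it merely quotes it from \cite[Corollary 14.4]{Cas07}, so there is no ``paper's own proof'' to compare against. What you have written is precisely the natural derivation: use the non-fundamental hypothesis to collapse the $n$-terms in (\ref{mult-val-morph}), then substitute into (\ref{form-mult-jac}). The only point worth a remark is the auxiliary bound $m_{p''}\ge 1$ in the satellite case, which you justify via $m_q \ge n_q \ge 1$; this is fine, since $\varphi_q(p)=O'$ forces both coordinate functions to vanish at $p$, so $n_q\ge 1$, and the height of the trunk is by construction at least its multiplicity.
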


\subsection{The problem.}

Our aim is to give an explicit algorithm which computes the weighted cluster $\mathcal{S}(\xi)$ of singular points of a singular and reduced germ of curve $\xi$ from the weighted cluster of base points of the jacobian system $BP(\mathcal{J}(\xi))$. In particular, we shall obtain a new proof of Theorem \ref{Casas-8.6.4}. To achieve this, we reinterpret the problem in terms of the theory of planar analytic morphisms as follows.

Let $(x,y)$ be a system of coordinates in a neighbourhood  $U$ of $O$, $f$ an equation for the germ $\xi$, and $\eta: g = 0$ a smooth germ at $O$ such that the point on $\eta$ in the first neighbourhood of $O$ is not in $BP(\mathcal{J}(\xi))$ and $\zeta = P_g(f): \frac{\partial(f,g)}{\partial(x,y)} = 0$ is a topologically generic transverse polar of $\xi$. Note that being topologically generic is a generic property, and being transverse excludes finitely many tangent directions at $O$, so the existence of such a $\eta$ is guaranteed.

The key observation is that we can think of the polar $\zeta$ as the jacobian germ of the morphism $\varphi: U \longrightarrow \mathbb{C}^2$ defined as $\varphi(x,y) = (f(x,y),g(x,y))$.

Let us first study the fundamental points of $\varphi$. Since we are assuming $\zeta$ to be transverse, we know that $f$ and $g$ share no factors, so $\varphi$ has no contracted germ. Thus the only fundamental points of $\varphi$ are its base points $BP(\varphi) = BP(\{\xi_{\lambda}: \lambda_1 f + \lambda_2 g = 0\})$. Note that $\xi_{[1,0]} = \xi$ and $\xi_{[0,1]} = \eta$. We have $e_O(\xi_{\lambda}) = 1$ for $\lambda \neq [1,0]$, and so $\nu_O(BP(\varphi)) = 1$. Since the weighted cluster of base points of a pencil is consistent, this forces $BP(\varphi)$ to be irreducible and to have only free points with virtual multiplicity one. Moreover, its self-intersection is $BP(\varphi)^2 = e_O(\xi)$, so $BP(\varphi)$ consists of $e_O(\xi)$ points lying on $\eta$. We have thus proved the following

\begin{lem} \label{Lem-BP(morphism)}
The fundamental points of $\varphi$ are exactly the first $e_O(\xi)$ points in $\mathcal{N}_O(\eta)$. In particular, there are no fundamental points in $BP(\mathcal{J}(\xi))$ but the origin $O$.
\end{lem}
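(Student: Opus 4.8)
The plan is to determine $BP(\varphi)$ completely: to show it is a chain of $e_O(\xi)$ free points of virtual multiplicity one, all lying on $\eta$, and then to read off both assertions. First I would dispose of the contracted germ. Since $\zeta=P_g(f)$ is transverse, $\eta$ is not tangent to $\xi$; in particular the smooth, hence irreducible, germ $\eta$ is not a branch of $\xi$, so $\gcd(f,g)$ is a unit, $\varphi$ has no contracted germ, and the fundamental points of $\varphi$ are exactly the points of $BP(\varphi)=BP(\caP)$, where $\caP=\{\xi_\lambda:\lambda_1 f+\lambda_2 g=0\}$ is the pencil without fixed part with $\xi_{[1,0]}=\xi$ and $\xi_{[0,1]}=\eta$. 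Next I would compute the virtual multiplicity at the origin: as $g$ has nonzero linear part while $\xi$ is singular, every $\xi_\lambda$ with $\lambda\neq[1,0]$ is smooth, so $e_O(\caP)=1$ and hence $\nu_O(BP(\varphi))=1$. Finally comes the combinatorial heart: the base cluster of a pencil without fixed part is consistent, so $\nu_p\geq\sum_{q\ra p}\nu_q$ at every $p\in BP(\varphi)$, while $\nu_p\geq 1$ there. An induction on the order of the points, starting from $\nu_O=1$, then forces every virtual multiplicity to be $1$ and every point to be free: at most one point of $BP(\varphi)$ can be proximate to a given point of virtual multiplicity one, and a satellite point, which together with its parent would be proximate to a common earlier point of virtual multiplicity one, would violate the proximity inequality there. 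Thus $BP(\varphi)$ is a chain of free points, all of virtual multiplicity one.

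To count and locate these points I would invoke that the self-intersection of the base cluster of a pencil equals the intersection multiplicity of two distinct members, so $BP(\varphi)^2=[\xi.\eta]_O$; and since $\eta$ is a smooth germ transverse to $\xi$, Noether's formula gives $[\xi.\eta]_O=e_O(\xi)$. As all virtual multiplicities are $1$, $BP(\varphi)^2$ simply counts the points of $BP(\varphi)$, so $BP(\varphi)$ is a chain of length $e_O(\xi)$. Because $\eta$ is a member of $\caP$, it goes virtually through $BP(\varphi)$, whence $e_p(\eta)\geq\nu_p=1$ for every $p\in BP(\varphi)$; thus $BP(\varphi)\subseteq\caN_O(\eta)$, and since $\caN_O(\eta)$ is itself a free chain and $BP(\varphi)$ is a cluster (closed under predecessors), $BP(\varphi)$ must be the initial segment consisting of the first $e_O(\xi)$ points of $\caN_O(\eta)$. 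The final assertion is then immediate: the point of $\eta$ in the first neighbourhood of $O$ is, by hypothesis, not in $BP(\caJ(\xi))$, and $BP(\caJ(\xi))$ is closed under predecessors, so no later point of $\caN_O(\eta)$ belongs to it either; hence the only fundamental point of $\varphi$ in $BP(\caJ(\xi))$ is $O$.

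The only step where I expect to need real care is the consistency argument of the first paragraph: one must make sure that $\nu_O=1$, the positivity of the virtual multiplicities, and the proximity inequalities genuinely force the chain structure and rule out satellite points. The rest is a direct application of the quoted facts about base points of pencils, of Noether's formula, and of the transversality and genericity hypotheses on $\eta$.
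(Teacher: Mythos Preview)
Your proposal is correct and follows essentially the same approach as the paper: both argue that transversality kills the contracted germ, that $\nu_O(BP(\varphi))=1$ because all $\xi_\lambda$ with $\lambda\neq[1,0]$ are smooth, that consistency then forces $BP(\varphi)$ to be a free chain with all virtual multiplicities one, and that the self-intersection $BP(\varphi)^2=[\xi.\eta]_O=e_O(\xi)$ pins down the length, with the points lying on $\eta$. Your write-up is simply more explicit about the consistency step and about deducing the ``in particular'' clause from the hypothesis that the point of $\eta$ in the first neighbourhood of $O$ lies outside $BP(\caJ(\xi))$.
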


Combining this result with formula (\ref{mult-val-morph})  and Corollary \ref{Assian-14.4} we obtain the following
	
\begin{lem} \label{calcul-n_p-m_p}
If $p \neq O$ is either a base point of $\mathcal{J}(\xi)$ or a satellite of one of them (or more generally, it is not a fundamental point of $\varphi$), then
\begin{eqnarray*}
& & n_p = \sum_{p \rightarrow q} n_q, \\
& & m_p = m_{p'} + e_p(\zeta) + 1 \mbox{ if $p$ is free, proximate to $p'$, and }\\
& & m_p = m_{p'} + m_{p''} + e_p(\zeta) \mbox{ if $p$ is satellite, proximate to $p'$ and $p''$,}
\end{eqnarray*}
while for $p = O$ we have $n_O = 1$ and $m_O = e_O(\xi) = e_O(\zeta) + 1$.
\end{lem}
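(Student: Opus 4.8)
The plan is to read the statement off the results already established for the planar morphism $\varphi=(f,g)$, whose jacobian germ is the polar $\zeta$ (because $\eta$ is smooth): Lemma~\ref{Lem-BP(morphism)} on the fundamental points of $\varphi$, the proximity relation (\ref{mult-val-morph}) for the multiplicities $n_q=e(\varphi_q)$, and Proposition~\ref{pro-mult-jac}/Corollary~\ref{Assian-14.4} on the multiplicities of ${\bf J}(\varphi)$. First I would check that the two explicit instances in the hypothesis fall under the general one. Since $\zeta$ is transverse, $\varphi$ has no contracted germ, so $e_q(\varphi)$ is just the virtual multiplicity of $BP(\varphi)$ at $q$ for every $q$; by Lemma~\ref{Lem-BP(morphism)} the fundamental points of $\varphi$ are exactly the first $e_O(\xi)$ points on $\eta$, and in particular they are all free. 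Hence a base point of $\caJ(\xi)$ other than $O$ is non-fundamental for $\varphi$ (this is precisely the last sentence of Lemma~\ref{Lem-BP(morphism)}), and a satellite of such a point, being a satellite point, is not free and therefore also non-fundamental. So from now on I only assume $p\neq O$ is non-fundamental, that is $e_p(\varphi)=0$.

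The value of $n_p$ is then immediate from (\ref{mult-val-morph}) applied to $\varphi$ at $p$: since $n_p=e(\varphi_p)$ and $e_p(\varphi)=0$, we obtain $n_p=\sum_{p\ra q}e(\varphi_q)=\sum_{p\ra q}n_q$, which is $n_p=n_{p'}$ when $p$ is free proximate to $p'$ and $n_p=n_{p'}+n_{p''}$ when $p$ is satellite proximate to $p'$ and $p''$. For the heights I would use that ${\bf J}(\varphi)=\zeta$ and apply Corollary~\ref{Assian-14.4} to the non-fundamental point $p$: it gives directly $m_p=m_{p'}+e_p({\bf J}(\varphi))+1=m_{p'}+e_p(\zeta)+1$ in the free case and $m_p=m_{p'}+m_{p''}+e_p({\bf J}(\varphi))=m_{p'}+m_{p''}+e_p(\zeta)$ in the satellite case.

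For $p=O$ one has $n_O=e_O(\varphi)=\min\{o_O(f),o_O(g)\}$; since $\eta$ is smooth $o_O(g)=1$, while $\xi$ is singular so $o_O(f)=e_O(\xi)\geq 2$, hence $n_O=1$. Plugging $n_O=1$ into the case $p=O$ of (\ref{form-mult-jac}) and using ${\bf J}(\varphi)=\zeta$ again yields $e_O(\zeta)=m_O+n_O-2=m_O-1$, i.e. $m_O=e_O(\zeta)+1$. To close, I would invoke the classical equality $e_O(\zeta)=e_O(\xi)-1$ for a transverse polar: being topologically generic, $\zeta$ has multiplicity at $O$ equal to that of the neat system $\caJ(\xi)$, which is $e_O(\xi)-1$ since the generators $f,\partial f/\partial x,\partial f/\partial y$ of ${\bf J}(\xi)$ all have order at least $e_O(\xi)-1$ at $O$, with equality attained (for instance by the polar with respect to a line not tangent to $\xi$). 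Therefore $m_O=e_O(\xi)$.

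I do not expect any real difficulty: the statement is essentially a bookkeeping of Lemma~\ref{Lem-BP(morphism)}, formula (\ref{mult-val-morph}) and Proposition~\ref{pro-mult-jac}/Corollary~\ref{Assian-14.4}. The only places that need an explicit word are that satellites of base points of $\caJ(\xi)$ are non-fundamental for $\varphi$ (they are not free, hence not in $BP(\varphi)$), the identification ${\bf J}(\varphi)=\zeta$ (which holds because $\eta$ is smooth), and the classical multiplicity $e_O(\zeta)=e_O(\xi)-1$ of a transverse polar.
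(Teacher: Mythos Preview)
Your proposal is correct and follows exactly the route the paper indicates: the paper's own ``proof'' is the single sentence preceding the lemma, which says it follows by combining Lemma~\ref{Lem-BP(morphism)} with formula~(\ref{mult-val-morph}) and Corollary~\ref{Assian-14.4}, and you have simply unpacked those references (adding the routine verification of the $p=O$ case via Proposition~\ref{pro-mult-jac} and the classical $e_O(\zeta)=e_O(\xi)-1$).
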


\subsection{The solution: an algorithm.}

The algorithm we have found to solve this problem is based on the following facts. Firstly, $BP\left(\mathcal{J}\left(\xi\right)\right)$ coincides with the weighted cluster of base points of the pencil $\left(\frac{\partial f}{\partial x},\frac{\partial f}{\partial y}\right)$ spanned by any two polars along different directions. This allows to recover the set of polar invariants just from $BP\left(\mathcal{J}\left(\xi\right)\right)$ (see Lemma \ref{Lem-Rec-Pol-Inv}). Secondly, although the underlying cluster of $BP\left(\mathcal{J}\left(\xi\right)\right)$ does not coincide with the set of singular points of $\xi$, each dicritical point $d$ of $BP\left(\mathcal{J}\left(\xi\right)\right)$ corresponds to a unique rupture point $q_d \in \mathcal{R}\left(\xi\right)$ whose associated polar invariant is given by any branch of a polar going through $d$ (Proposition \ref{Prop-p-gamma}). Furthermore, any rupture point of $\xi$ can be obtained in this way, so $BP\left(\mathcal{J}\left(\xi\right)\right)$ is enough to determine the {\em set} of singular points of $\xi$ (because every maximal singular point is a rupture point). Finally, since the set of singular points does not determine the equisingularity point of the curve (because there are many ways to assign virtual multiplicities in a consistent way), it is necessary to determine the multiplicities of $\mathcal{S}\left(\xi\right)$.

Our algorithm works then roughly as follows (see Algorithm \ref{alg-1} for a precise description). In the first part, for each dicritical point $d$ of $BP\left(\mathcal{J}\left(\xi\right)\right)$ we compute the associated polar invariant $I_d$ and explicitly find the rupture point $q_d$ by comparing $I_d$ with the quotients $\frac{m_p}{n_p}$. In the second part, after finding all rupture and singular points, we determine the values of $\xi$ at any singular point (which indeed coincide with $m_p$ for many $p$, for example for the rupture points). This is clearly equivalent to recover the virtual multiplicities of $\mathcal{S}\left(\xi\right)$ by means of the formula (\ref{mult-vals}).

Algorithm \ref{alg-1} implemented in the Computer Algebra system {\tt Macaulay 2} \cite{GS} will be available at the web page {\tt www.pagines.ma1.upc.edu/$\sim$alberich} or upon request to authors.

\section{Tracking the behaviour of the invariant quotients} \label{sect-tracking}

In this section we develop the main technical results which describe de behaviour of the invariant quotients $I_{\xi}(p)$ as $p$ ranges over $\mathcal{N}_O$, as well as its relation to the values $v_p(\xi)$ and the heights $m_p$ associated to the morphism $\varphi$ introduced at the end of section \ref{sect-prel}.

\subsection{Growth of the invariant quotients}

First of all, we need to introduce a new order relation in $\mathcal{N}_O$. Recall (Definition \ref{df_Kp}) that for any $p \in \mathcal{N}_O$, $\mathcal{K}(p)$ is the irreducible weighted cluster whose last point is $p$.

\begin{df}
Let $q_1 \neq q_2$ be two points infinitely near to $O$, equal to or satellite of the free points $p_1$ and $p_2$ respectively. We say that $q_1$ is {\em smaller than} $q_2$ (or $q_2$ is {\em bigger than} $q_1$), and denote it $q_1 \prec q_2$ (or $q_2 \succ q_1$) if $p_1 \leqslant p_2$ (with the usual order) and $\frac{\nu_{p_1}(\mathcal{K}(q_1))}{\nu_O(\mathcal{K}(q_1))} \leq \frac{\nu_{p_1}(\mathcal{K}(q_2))}{\nu_O(\mathcal{K}(q_2))}$. Obviously, we denote by $q_1 \preceq q_2$ the situation in which $q_1 \prec q_2$ or $q_1 = q_2$, and similarly for $q_2 \succeq q_1$.
\end{df}

We introduce also the following relation between points and irreducible curves.

\begin{df}
Let $\gamma$ be any irreducible germ, let $p$ be any free point and let $q$ be either $p$ or a $p$-satellite point. We say that $q$ is {\em smaller} {\em than} $\gamma$ (or that $\gamma$ is {\em bigger} than $q$) if $p \in \mathcal{N}_O(\gamma)$ (or equivalently $e_p(\gamma)>0$) and $\frac{\nu_p(\mathcal{K}(q))}{\nu_O(\mathcal{K}(q))} < \frac{e_p(\gamma)}{e_O(\gamma)}$. We denote it $q \prec \gamma$.
\end{df}

\begin{rmk}
It is worth noting that the ordering $\prec$ coincides with the ordering in the dual graph. More precisely, if $\Gamma$ is the dual graph of a cluster containing $q_1$ and $q_2$, then $q_1 \prec q_2$ if and only if the vertex corresponding to $q_1$ belongs to the minimal path from $O$ to $q_2$.
\end{rmk}

The following lemmas summarize the main properties of the order relation $\prec$.

\begin{lem} \label{lem-Ordre-p-satellites}
Let $p \in \mathcal{N}_O$ be any free point different from $O$, proximate to $p'$. Then:
\begin{enumerate}
\item The satellite point $q$ in the first neighbourhood of $p$ satisfies $p' \prec q \prec p$.
\item If $q$ is a $p$-satellite point, the two satellite points $q_1, q_2$ in its first neighbourhood may be ordered as $p' \prec q_1 \prec q \prec q_2 \prec p$. Moreover, every $p$-satellite point $q'$ infinitely near to $q_1$ (resp. $q_2$) satisfies $q' \prec q$ (resp. $q' \succ q$).
\end{enumerate}
\end{lem}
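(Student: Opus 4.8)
The plan is to translate the relation $\prec$ into a comparison of rational numbers and then exhibit a Stern--Brocot (mediant) structure behind it. Fix $p$ free and proximate to $p'$, and for any point $x$ with $p'\leqslant x$ set $\mu_x:=\nu_p(\caK(x))/\nu_{p'}(\caK(x))$, with the convention $\mu_{p'}=0$ (indeed $p\notin K(p')$) and $\mu_p=1$. First I would record the reduction: by Remark~\ref{Rmk-quotients}(2) the ratio $\nu_{p'}(\caK(x))/\nu_O(\caK(x))$ does not depend on $x\geqslant p$, so $\nu_p(\caK(x))/\nu_O(\caK(x))$ is a fixed positive multiple of $\mu_x$; hence, for two $p$-satellite points (or for $x=p$), one has $x_1\prec x_2$ iff $\mu_{x_1}\leqslant\mu_{x_2}$. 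Granting in addition that $0<\mu_x<1$ for every $p$-satellite $x$ and that $p'\prec x\prec p$ for every such $x$ (the latter obtained by plugging the identity $\nu_r(\caK(x))=\nu_{p'}(\caK(x))\,\nu_r(\caK(p'))$ for $r\leqslant p'$, proved below, into the definition of $\prec$), both assertions of the Lemma become statements about the numbers $\mu_x$.

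The technical heart is a decomposition of the clusters $\caK(q)$. \emph{Claim:} if $q$ is a satellite point proximate to its immediate predecessor $q'$ and to a point $q''$ (so $q''<q'$), then $\nu_q(\caK(q))=1$, $\nu_r(\caK(q))=\nu_r(\caK(q'))$ for $q''<r\leqslant q'$, and $\nu_r(\caK(q))=\nu_r(\caK(q'))+\nu_r(\caK(q''))$ for $r\leqslant q''$. I would prove this by defining the weighted cluster $\caM$ on the point set $K(q)=K(q')\cup\{q\}$ by those right-hand sides and checking, point by point (at $q$, at $q'$, at $q''$, at the points strictly between $q''$ and $q'$, and at the points below $q''$), that $\nu_q(\caM)=1$ and $\rho_r(\caM)=0$ for all $r<q$; each check uses only $\rho_r(\caK(q'))=0$ for $r<q'$, $\rho_r(\caK(q''))=0$ for $r<q''$, $\nu_{q'}(\caK(q'))=\nu_{q''}(\caK(q''))=1$, and the facts that $q$ is the only point of $K(q)$ above $q'$ and is proximate exactly to $q'$ and $q''$. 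Since Definition~\ref{df_Kp} determines $\caK(q)$ uniquely on the point set $K(q)$, $\caM=\caK(q)$. Iterating the Claim along the satellite chain issuing from $p$ yields: (i) the restriction of $\caK(q)$ to $\{r\leqslant p'\}$ equals $\nu_{p'}(\caK(q))\cdot\caK(p')$ (which in particular gives the identity used in the first paragraph, hence also Remark~\ref{Rmk-quotients}(2)); and (ii) that $\mu_q=\dfrac{\nu_p(\caK(q'))+\nu_p(\caK(q''))}{\nu_{p'}(\caK(q'))+\nu_{p'}(\caK(q''))}$, the mediant of $\mu_{q'}$ and $\mu_{q''}$ --- directly from the Claim when $p\leqslant q''$, and, using the convention $\mu_{p'}=0$, verbatim also in the degenerate cases $q'=p$ or $q''=p'$.

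Part (1) now falls out: the unique satellite $q_0$ in the first neighbourhood of $p$ is proximate to $p$ and $p'$, so by (ii) $\mu_{q_0}$ is the mediant of $1$ and $0$, i.e. $\mu_{q_0}=\tfrac12$, and $0<\tfrac12<1$ gives $p'\prec q_0\prec p$. For (2): by (ii), $\mu_q$ lies strictly between $\mu_{q'}$ and $\mu_{q''}$ (these are distinct --- a routine induction along the staircase), and the two satellite points in the first neighbourhood of $q$ are proximate to $\{q,q'\}$ and to $\{q,q''\}$ respectively, hence (again by (ii)) have $\mu$-values equal to the mediant of $\mu_q$ with $\mu_{q'}$ and with $\mu_{q''}$; since $\mu_{q'},\mu_{q''}$ lie on opposite sides of $\mu_q$, one of these mediants exceeds $\mu_q$ and the other is below it. Writing $q_1$ for the one with the smaller $\mu$-value and $q_2$ for the other, and using the reduction of the first paragraph, $p'\prec q_1\prec q\prec q_2\prec p$. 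Finally, $q_1$ is proximate to $q$ and to the one of $q',q''$ with smaller $\mu$-value, say $s$; then a $p$-satellite point infinitely near $q_1$, together with all $p$-satellites between $q_1$ and it, lies in the part of the diagram where the mediant recursion of (ii) applies, and that recursion confines all these $\mu$-values to the interval $(\mu_s,\mu_q)$, so they are $<\mu_q$ and hence $\prec q$; the case of $q_2$ is symmetric.

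The step I expect to be the main obstacle is the decomposition Claim. Heuristically it says that $\caK(q)$ is the ``union'' of $\caK(q')$ and $\caK(q'')$ along their common part, but this cannot be read off from curves, since the generic curve through $\caK(q)$ meets $q$ as a \emph{satellite} point whereas the generic curves through $\caK(q')$ and $\caK(q'')$ leave their last points freely; one is therefore forced to push it through the uniqueness characterisation of $\caK(q)$, keeping careful track of the proximity equalities at every point, and then to verify that the resulting identities and the mediant formula (ii) are unaffected by the degenerate configurations ($q=q_0$, $q''=p'$, $p'=O$, or $p'$ satellite).
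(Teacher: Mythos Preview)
Your argument is correct and is essentially the same approach as the paper's. The paper invokes the continued-fraction expansion of $\nu_p(\caK(q))/\nu_{p'}(\caK(q))$ (deferring to \cite{AAG10}), while you work directly with the Stern--Brocot mediant recursion; these are two presentations of the same structure, since the mediant tree you build is precisely what the continued-fraction expansion encodes, and your decomposition Claim is the combinatorial fact underlying both. Your write-up has the advantage of being self-contained rather than citing an external reference.
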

\begin{proof}
The proof follows easily from the relation between the set of $p$-satellite points in $\mathcal{K}(q)$ and the expansion as a continued fraction of the quotient $\frac{\nu_p(\mathcal{K}(q))}{\nu_{p'}(\mathcal{K}(q))}$, combined with some elementary properties of continued fractions (see for instance \cite[Remark 2.1 and Lemma 3.5]{AAG10}). Alternatively, the result follows immediately from the fact that $\prec$ coincides with the order in the dual graph.
\end{proof}

%\begin{df}
%For future reference, the satellite point $q$ in the first neighbourhood of a free point $p$ will also be called the {\em first} satellite of $p$.
%\end{df}

For future reference, the point $q_1$ (resp. $q_2$) in the second case above will be called {\em first} (resp. {\em second}) {\em satellite} of $q$. In the first case, when there is only one satellite point $q$, it will be called {\em first satellite} of $p$.

It is also useful to know how a satellite point is ordered with respect to the two points which it is proximate to.

\begin{lem} \label{lem-Ordre-proximate}
Let $q$ be a satellite point, proximate to $q_1$ and $q_2$, and assume $q_1 \prec q_2$. Then
\[q_1 \prec q \prec q_2.\]
\end{lem}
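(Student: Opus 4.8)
The plan is to replace the combinatorial definition of $\prec$ by the dual-graph criterion recorded in the Remark above, after which the statement becomes an elementary fact about trees.

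First I would work inside the cluster $\caK(q)$. Since $q$ is proximate to $q_1$ and $q_2$ we have $q_1,q_2<q$, so $q_1$, $q_2$ and $O$ all belong to $\caK(q)$. Let $\pi\colon \bar S\to S$ be the composition of the blowing-ups of all the points of $\caK(q)$ and let $\Gamma$ be the dual graph of its exceptional locus, which is a tree whose vertices are the (strict transforms in $\bar S$ of the) divisors $E_r$, $r\in\caK(q)$. Because $q$ is the unique maximal point of $\caK(q)$, it is blown up last, so $E_q$ is not modified afterwards; hence the neighbours of $E_q$ in $\Gamma$ are exactly the divisors $E_r$ ($r\in\caK(q)$, $r\neq q$) whose strict transform contains $q$ at the moment $q$ is blown up, that is, exactly $E_{q_1}$ and $E_{q_2}$. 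Moreover, since $q$ is a satellite point it equals the intersection of the strict transforms of $E_{q_1}$ and $E_{q_2}$ just before the blow-up, so blowing up $q$ separates these two divisors and $\Gamma$ has no edge between $E_{q_1}$ and $E_{q_2}$. Consequently $E_{q_1}-E_q-E_{q_2}$ is a path in the tree $\Gamma$, hence it is the geodesic joining $E_{q_1}$ and $E_{q_2}$.

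Now I would bring in the hypothesis: by the dual-graph criterion, $q_1\prec q_2$ says that $E_{q_1}$ lies on the geodesic in $\Gamma$ from $E_O$ to $E_{q_2}$. Concatenating the geodesic from $E_O$ to $E_{q_1}$ with the path $E_{q_1}-E_q-E_{q_2}$ exhibits the geodesic from $E_O$ to $E_{q_2}$; in particular $E_q$ lies on it, which is precisely $q\prec q_2$, and $E_{q_1}$ lies on its initial segment, which is the geodesic from $E_O$ to $E_q$, i.e. $q_1\prec q$. This yields $q_1\prec q\prec q_2$.

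The only delicate point is the description of the neighbours of $E_q$ in $\Gamma$ — equivalently, the standard fact that blowing up a satellite point produces a valence-two vertex of the dual graph joined precisely to the two divisors it is proximate to; granting this, the rest is pure tree combinatorics. If one prefers to avoid dual graphs, an alternative route is valuation-theoretic: using $\nu_r(\,\cdot\,)=v_r(\,\cdot\,)/\nu_O(\caK(r))$, the denominator identity $\nu_O(\caK(q))=\nu_O(\caK(q_1))+\nu_O(\caK(q_2))$ for $q$ proximate to $q_1,q_2$, and the proximity formula $v_q(f)=e_q(f)+v_{q_1}(f)+v_{q_2}(f)$, one checks directly that $\nu_{q_1}(f)\le\nu_q(f)\le\nu_{q_2}(f)$ for every $f\in\caO$, so that $q_1\preceq q\preceq q_2$; this works as well but requires a short case distinction according to whether $q_2$ is free or satellite.
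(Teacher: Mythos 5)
Your proof is correct, but it follows a genuinely different route from the one the paper has in mind. The paper's proof of this lemma (like that of Lemma \ref{lem-Ordre-p-satellites}) is by direct computation: it expands the defining quotients $\nu_p(\caK(q_i))/\nu_{p'}(\caK(q_i))$ as continued fractions, where the continued-fraction coefficients encode the satellite structure of $\caK(q_i)$, and then compares these expansions using elementary properties of continued fractions (see \cite[Remark~2.1 and Lemma~3.5]{AAG10}). You instead invoke the dual-graph reformulation of $\prec$ recorded in the preceding Remark and reduce everything to tree combinatorics: you identify the neighbours of $E_q$ in the dual graph of the resolution of $\caK(q)$ as exactly $E_{q_1}$ and $E_{q_2}$ (which is correct, since $q$ is the last point of $\caK(q)$ and is proximate precisely to $q_1$ and $q_2$), conclude that $E_{q_1}\mathord-E_q\mathord-E_{q_2}$ is the geodesic between $E_{q_1}$ and $E_{q_2}$, and then splice geodesics in a tree. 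The tree argument is clean and makes the geometry transparent; it is also the natural viewpoint if one is comfortable with the valuative tree. Its cost is that the dual-graph/valuative characterization of $\prec$ is stated in the Remark without proof (with a pointer to \cite[Chapter~6]{FJ04}), so the hard combinatorial content is not eliminated but relocated --- ultimately, verifying that equivalence amounts to the same Euclidean-algorithm computation the paper uses directly. Your alternative valuation-theoretic sketch is also viable: the lower bound $\nu_{q_1}(f)\le\nu_q(f)$ follows immediately from $e_q(f)\ge 0$ and $\nu_{q_1}\le\nu_{q_2}$, while the upper bound reduces to the inequality $\nu_{q_2}(f)-\nu_{q_1}(f)\ge e_q(f)/\nu_O(\caK(q_1))$, which is where the short case analysis you mention is genuinely needed (it is not automatic when $e_q(f)>0$). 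Both approaches are sound; the paper's is the more self-contained, yours the more conceptual.
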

%\begin{proof}
%The proof can be carried out using the same tools as in the proof of Lemma \ref{lem-Ordre-p-satellites}.
%\end{proof}

We now turn to the relation between the ordering $\prec$ and the growth of the invariant quotients $I_{\xi}(p)$.

\begin{pro} \label{pro-growing}
Let $p \neq O$ be a free point proximate to $p'$, let $q_1$ be a $p$-satellite point and let $q_2 \succ q_1$ be either $p$ or another $p$-satellite point. Then the following inequalities hold:
\[I_{\xi}(p') \stackrel{(a)}{\leqslant} I_{\xi}(q_1) \stackrel{(b)}{\leqslant} I_{\xi}(q_2).\]
Moreover, equality holds in $(a)$ if and only if $p \not \in \mathcal{N}_O(\xi)$, and equality holds in $(b)$ if and only if there is no branch $\gamma$ of $\xi$ such that $q_1 \prec \gamma$ (bigger than $q_1$). In particular, note that equality in $(a)$ implies equality in $(b)$.
\end{pro}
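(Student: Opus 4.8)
===

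The plan is to work directly with the definition $I_\xi(p) = [\xi.\caK(p)]/\nu_O(\caK(p))$, decomposing both numerator and denominator along the branches of $\xi$ and, crucially, identifying $I_\xi(p)$ with a weighted average of the curve-valuations $\nu_\gamma$ over the branches $\gamma$ of $\xi$. Write $\xi = \sum_i \gamma_i$ with $\gamma_i$ the branches. By Noether's formula $[\xi.\caK(p)] = \sum_i [\gamma_i.\caK(p)] = \sum_i e_O(\gamma_i) \nu_\gamma^{(i)}(p)$, where $\nu_{\gamma_i}(p) := [\gamma_i.\caK(p)]/e_O(\gamma_i)$ is the value at $p$ of the curve-valuation attached to $\gamma_i$; hence
\[
I_\xi(p) = \frac{\sum_i e_O(\gamma_i)\,\nu_{\gamma_i}(p)}{\sum_i e_O(\gamma_i)},
\]
a convex combination with fixed weights $e_O(\gamma_i)$ independent of $p$. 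So the whole proposition reduces to controlling, for a single irreducible germ $\gamma$, how the quantity $\nu_\gamma(p) = [\gamma.\caK(p)]/e_O(\gamma)$ behaves as $p$ moves among $p'$, the $p$-satellite points, and $p$ itself, together with a careful bookkeeping of when each individual inequality is strict.

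For a single branch $\gamma$ I would split into cases according to whether $p \in \caN_O(\gamma)$. If $p \notin \caN_O(\gamma)$, then already $\gamma$ and $\caK(p)$ separate at or before $p'$, so $[\gamma.\caK(p)] = [\gamma.\caK(p')] = [\gamma.\caK(q_1)] = [\gamma.\caK(q_2)]$ and $\nu_\gamma$ takes the same value at all four points — these are exactly the branches that contribute trivially (hence the role of the condition ``$p\notin\caN_O(\xi)$'' in characterizing equality in $(a)$). If $p \in \caN_O(\gamma)$, then $\gamma$ passes through $p$, so $e_p(\gamma) > 0$, and the monotonicity $\nu_\gamma(p') \le \nu_\gamma(q_1) \le \nu_\gamma(q_2)$ with $q_1$ a $p$-satellite and $q_2 \succ q_1$ should follow from the ordering lemmas (Lemma \ref{lem-Ordre-p-satellites}, Lemma \ref{lem-Ordre-proximate}) combined with Remark \ref{Rmk-quotients}: indeed $\nu_\gamma(p) = \lim$ of the $\nu_\gamma$-values along the $p$-satellite chain, and the continued-fraction description of the $p$-satellite points shows that $\nu_\gamma$ is monotone along $\prec$ precisely as the virtual-multiplicity quotients $\nu_p(\caK(q))/\nu_O(\caK(q))$ are. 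The strictness in $(b)$ is where the relation $q_1 \prec \gamma$ enters: $\nu_\gamma(q_1) < \nu_\gamma(q_2)$ exactly when $\gamma$ ``overtakes'' $q_1$, i.e. when $e_p(\gamma)/e_O(\gamma) > \nu_p(\caK(q_1))/\nu_O(\caK(q_1))$, which is the definition of $q_1 \prec \gamma$; otherwise $\gamma$ has already left the cluster $\caK(q_2)$ by the time one passes from $q_1$ to $q_2$ and the values agree.

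Assembling: inequality $(a)$ for $\xi$ is a nonnegative combination of the per-branch inequalities $\nu_{\gamma_i}(p') \le \nu_{\gamma_i}(q_1)$, with strict inequality iff at least one branch $\gamma_i$ has $p \in \caN_O(\gamma_i)$, i.e. iff $p \in \caN_O(\xi)$ — note the per-branch inequality at $(a)$ is strict for $\gamma_i$ exactly when $p\in\caN_O(\gamma_i)$, which already gives the ``only if'' and ``if''. Similarly $(b)$ is strict iff some branch satisfies $q_1 \prec \gamma_i$. Finally, the ``in particular'' clause — equality in $(a)$ forces equality in $(b)$ — is immediate from this description: if $p \notin \caN_O(\xi)$ then no branch passes through $p$, so a fortiori no branch can be bigger than the $p$-satellite $q_1$ (which lies strictly after $p$), hence $(b)$ is an equality too.

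I expect the main obstacle to be the single-branch monotonicity step, specifically proving $\nu_\gamma(q_1) \le \nu_\gamma(q_2)$ with the sharp characterization of equality via $q_1 \prec \gamma$: this requires translating the position of the $p$-satellite points $q_1 \prec q_2$ into the continued-fraction data of $\gamma$'s Puiseux exponents at $p$ and checking that the intersection number $[\gamma.\caK(q)]$ stabilizes exactly past the point where $\gamma$ and $\caK(q)$ split. The ordering lemmas quoted above do the combinatorial heavy lifting, but matching ``$\gamma$ splits from $\caK(q_2)$ at a point $\preceq q_1$'' with ``not $q_1 \prec \gamma$'' is the delicate bookkeeping. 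The rest — Noether's formula, linearity over branches, and reading off strictness — is routine once this is in place.
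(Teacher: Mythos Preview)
Your overall strategy --- decompose $I_\xi$ over the branches of $\xi$ and prove a per-branch monotonicity statement --- is exactly the route the paper takes. However, your decomposition formula is wrong, and the error propagates into the per-branch claims.

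You write $I_\xi(p) = \dfrac{\sum_i e_O(\gamma_i)\,\nu_{\gamma_i}(p)}{\sum_i e_O(\gamma_i)}$ with $\nu_{\gamma_i}(p) := [\gamma_i.\caK(p)]/e_O(\gamma_i)$. The numerator is correct, but the denominator of $I_\xi(p)$ is $\nu_O(\caK(p))$, not $e_O(\xi) = \sum_i e_O(\gamma_i)$; these are entirely different numbers, and crucially $\nu_O(\caK(p))$ \emph{does} depend on $p$. So the ``fixed weights independent of $p$'' claim fails. Consequently your per-branch assertion for branches with $p \notin \caN_O(\gamma)$ --- that $[\gamma.\caK(p')] = [\gamma.\caK(q_1)] = [\gamma.\caK(q_2)] = [\gamma.\caK(p)]$ --- is false: already for a smooth $\gamma$ through $O$ missing $p'$ one has $[\gamma.\caK(q)] = \nu_O(\caK(q))$, which varies with $q$. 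What \emph{is} constant in that case is the quotient $[\gamma.\caK(q)]/\nu_O(\caK(q))$.

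The fix is to use the correct additive decomposition $I_\xi(p) = \sum_i I_{\gamma_i}(p)$ (linearity of the numerator in the curve, with denominator independent of the curve), and then prove per-branch monotonicity of $I_{\gamma_i}$ rather than of your $\nu_{\gamma_i}$. This is precisely what the paper does in (\ref{eq2}). For the branches through $p$, rather than appealing loosely to continued fractions, the paper invokes an explicit formula (Proposition~2.5 of \cite{AAG10}) expressing $[\xi_j.\gamma^{q_i}]/e_O(\gamma^{q_i})$ as a sum of terms independent of $q_i$ plus $e_{p'}(\xi_j)\min\bigl\{e_p(\xi_j)/e_O(\xi_j),\,e_p(\gamma^{q_i})/e_O(\gamma^{q_i})\bigr\}$; the comparison between $q_1$ and $q_2$ then reduces immediately to comparing this minimum, and strictness occurs exactly when $\xi_j \succ q_1$. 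This replaces what you flagged as the ``main obstacle'' with a one-line computation.
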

\begin{proof}
For any infinitely near point $q \in \mathcal{N}_O$, let $\gamma^q$ be any irreducible  curve going through $q$ and having a free point in its first neighbourhood which does not lie on $\xi$. The first inequality, as well as the characterization of equality, is easily obtained computing the intersections $[\xi.\gamma^{p'}]$ and $[\xi.\gamma^{q_1}]$ with Noether's Formula (\ref{Noether})% and applying Remark \ref{Rmk-quotients}
.

For the second inequality, let $\xi_1, \ldots, \xi_k$ be the branches of $\xi$ and expand each $I_{\xi}(q_i)$ as
\begin{equation} \label{eq2}
I_{\xi}(q_i) = \frac{[\xi.\gamma^{q_i}]}{e_O(\gamma^{q_i})} = \sum_{j=1}^k \frac{[\xi_j.\gamma^{q_i}]}{e_O(\gamma^{q_i})}.
\end{equation}
For branches $\xi_j$ not going through $p$, we have $\frac{[\xi_j.\gamma^{q_1}]}{e_O(\gamma^{q_1})} = \frac{[\xi_j.\gamma^{q_2}]}{e_O(\gamma^{q_2})}$ again by Noether's Formula% and Remark \ref{Rmk-quotients}
. For the rest of the branches, following \cite[Proposition 2.5]{AAG10} we can write
\begin{equation} \label{eq3}
\frac{[\xi_j.\gamma^{q_i}]}{e_O(\gamma^{q_i})} = \sum_{q < p} \frac{e_{q}(\xi_j)^2}{e_O(\xi_j)} + e_{p'}(\xi_j) \min \left\{ \frac{e_p(\xi_j)}{e_O(\xi_j)}, \frac{e_p(\gamma^{q_i})}{e_O(\gamma^{q_i})} \right\},
\end{equation}
and so we just need to take care of the minimum in the last summand. In the case $\frac{e_p(\xi_j)}{e_O(\xi_j)} \leqslant \frac{e_p(\gamma^{q_1})}{e_O(\gamma^{q_1})}$ this minimum is the same for $i=1,2$, while in the opposite case (i.e. when $\xi_j \succ q_1$) the minimum for $i=1$ is strictly smaller than for $i=2$, giving strict inequality in $(b)$ as wanted.
\end{proof}

\begin{rmk}
Proposition \ref{pro-growing} can be interpreted as follows: the function $I_{\xi}$ is monotone increasing on the dual graph of any composition of blow-ups, and strictly increasing over the dual graph of any subset of $\mathcal{N}_O\left(\xi\right)$.
\end{rmk}

%\begin{cor} \label{cor-growing}
%Let $p \neq O$ be a free point proximate to $p'$, and let $q$ be a $p$-satellite point. Then the following inequalities hold:
%\[I_{\xi}(p') \stackrel{(a)}{\leqslant} I_{\xi}(q) \stackrel{(b)}{\leqslant} I_{\xi}(p).\]
%Moreover, equality holds in $(b)$ if and only if there is no branch of $\xi$ bigger than $q$. Furthermore, we have $I_{\xi}(p') = I_{\xi}(p)$ (equality both in $(a)$ and $(b)$) if and only if $p \not \in \mathcal{N}_O(\xi)$.

%In particular, if there is no free point on $\xi$ in the first neighbourhood of $p$ and $q^*$ denotes the biggest $p$-satellite point on $\xi$, then $I_{\xi}(q) < I_{\xi}(q^*) = I_{\xi}(q') = I_{\xi}(p)$ for all $p$-satellites $q \prec q^* \prec q'$. If otherwise there is a free point on $\xi$ in the first neighbourhood of $p$, the same is true taking $p$ instead of $q^*$.
%\end{cor}
%\begin{proof}
%Just take $q_2 = p$ and $q_1 = q$ in Proposition \ref{pro-growing}.
%\end{proof}

Also next corollary follows immediately.

\begin{cor} \label{cor-I-rupt}
If $p \in \mathcal{N}_O(\xi)$ is a free point, all the polar invariants $I_{\xi}(q)$ associated to points $q \in \mathcal{R}^p(\xi)$ are different.
\end{cor}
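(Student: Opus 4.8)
The plan is to prove that $I_{\xi}$ is \emph{strictly} increasing along $\caR^{p}(\xi)$ once this set is ordered by $\prec$; this gives at once that all the values $I_{\xi}(q)$, $q\in\caR^{p}(\xi)$, are pairwise distinct.

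The first step is to note that $\caR^{p}(\xi)$ is totally ordered by $\prec$: its members are $p$ (if it is itself a rupture point) together with $p$-satellite rupture points of $\xi$, and by Lemma \ref{lem-Ordre-p-satellites} the relation $\prec$ restricts to a total order on the set of all $p$-satellites, with $p$ sitting above all of them. So write $\caR^{p}(\xi)=\{q^{(1)}\prec q^{(2)}\prec\cdots\prec q^{(s)}\}$; here $q^{(j)}$ is a $p$-satellite for $j<s$, while $q^{(s)}$ is either $p$ or the largest $p$-satellite rupture point, and if $s\geq 2$ the existence of a $p$-satellite forces $p\neq O$ (every satellite point has a free point strictly between $O$ and itself). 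Applying Proposition \ref{pro-growing}(b) with $q_{1}=q^{(j)}$ and $q_{2}=q^{(j+1)}$ we already get $I_{\xi}(q^{(j)})\leq I_{\xi}(q^{(j+1)})$ for $j<s$, and its equality clause reduces the whole statement to exhibiting, for each such $j$, a branch $\gamma$ of $\xi$ with $q^{(j)}\prec\gamma$.

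To build this branch I would use that $q^{(j+1)}$ is a rupture point of $\xi$. Whether $q^{(j+1)}$ equals $p$ (then at least two free points of $\xi$ lie in its first neighbourhood) or is a $p$-satellite (then, being satellite, at least one free point of $\xi$ lies in its first neighbourhood), there is a branch $\gamma$ of $\xi$ passing through $q^{(j+1)}$ whose point in the first neighbourhood of $q^{(j+1)}$ is free. By Remark \ref{Rmk-quotients}(1) such a $\gamma$ satisfies $\dfrac{e_{p}(\gamma)}{e_{O}(\gamma)}=\dfrac{\nu_{p}(\caK(q^{(j+1)}))}{\nu_{O}(\caK(q^{(j+1)}))}$. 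Since $q^{(j)}\prec q^{(j+1)}$ are distinct points, each of which is $p$ or a $p$-satellite, their quotients $\dfrac{\nu_{p}(\caK(\cdot))}{\nu_{O}(\caK(\cdot))}$ are distinct --- this is the continued-fraction bookkeeping already underlying Lemma \ref{lem-Ordre-p-satellites} (see \cite{AAG10}) --- so the inequality in the definition of $\prec$ is strict: $\dfrac{\nu_{p}(\caK(q^{(j)}))}{\nu_{O}(\caK(q^{(j)}))}<\dfrac{\nu_{p}(\caK(q^{(j+1)}))}{\nu_{O}(\caK(q^{(j+1)}))}=\dfrac{e_{p}(\gamma)}{e_{O}(\gamma)}$, that is, $q^{(j)}\prec\gamma$. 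By Proposition \ref{pro-growing}(b) this forces $I_{\xi}(q^{(j)})<I_{\xi}(q^{(j+1)})$, and letting $j$ run from $1$ to $s-1$ yields $I_{\xi}(q^{(1)})<\cdots<I_{\xi}(q^{(s)})$.

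The substantive content sits in the two order-theoretic facts about $\prec$ --- that it linearly orders the $p$-satellites and that $q\mapsto\nu_{p}(\caK(q))/\nu_{O}(\caK(q))$ is injective on $\{p\}\cup\{p\text{-satellites}\}$ --- which are elementary properties of continued fractions of the same flavour as those invoked in the proof of Lemma \ref{lem-Ordre-p-satellites}; granting them, everything else is a direct reading of Proposition \ref{pro-growing}(b) together with Remark \ref{Rmk-quotients}, which is presumably why the authors call it immediate. The one place to be slightly careful is to check that the branch $\gamma$ supplied by the rupture condition at $q^{(j+1)}$ really is one of the curves to which Remark \ref{Rmk-quotients}(1) applies, i.e. that its point in the first neighbourhood of $q^{(j+1)}$ is exactly the free point of $\xi$ provided by the rupture hypothesis.
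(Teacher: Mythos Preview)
Your proposal is correct and follows essentially the same route the paper intends: order $\caR^{p}(\xi)$ by $\prec$, and for consecutive $q^{(j)}\prec q^{(j+1)}$ use the rupture condition at $q^{(j+1)}$ to produce a branch of $\xi$ bigger than $q^{(j)}$, so that the equality clause of Proposition~\ref{pro-growing}(b) forces $I_{\xi}(q^{(j)})<I_{\xi}(q^{(j+1)})$. This is precisely the argument the paper later spells out inside the proof of Proposition~\ref{Prop-p-gamma} (the uniqueness of $q_d$), and your justification via Remark~\ref{Rmk-quotients}(1) that such a branch is indeed $\succ q^{(j)}$ is exactly the ``clearly'' the paper invokes there; you have simply made explicit the continued-fraction injectivity that the paper takes for granted.
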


Unfortunately, these results are not precise enough for our purposes, so we need a more sophisticated result which deals with a particular case.

\begin{pro} \label{pro-I-satellite-3}
%Let $\xi$ be a germ of curve at $O$, let $p \in \mathcal{N}_O$ be any free point different from $O$, proximate to $p'$. Assume there is exactly one branch $\gamma$ of $\xi$ going through $p$ and whose point in the first neighbourhood of $p$ is free, and suppose in addition that this point is non-singular (on $\xi$). Assume also that there is at least another branch of $\xi$ going through $p$ (whose point in the first neighbourhood of $p$ must be satellite) and let $q \in \mathcal{N}_O(\xi)$ be the biggest $p$-satellite rupture point on $\xi$. Then
Let $\xi$ be a germ of curve at $O$, and $p \in \mathcal{N}_O$ any free point different from $O$. Assume $\xi$ has at least two branches going through $p$, and that exactly one of them, say $\gamma$, goes through a free point in the first neighbourhood of $p$. Suppose in addition that $p$ is a non-singular point of $\gamma$. Finally, let $q \in \mathcal{N}_O(\xi)$ be the biggest $p$-satellite rupture point on $\xi$. Then
\[\frac{[\xi.\mathcal{K}(p)]-1}{\nu_O(\mathcal{K}(p))} = I_{\xi}(p)-\frac{1}{\nu_O(\mathcal{K}(p))} < I_{\xi}(q) < I_{\xi}(p).\]
\end{pro}
\begin{proof}
The second inequality is given by %Corollary \ref{cor-growing}
Proposition \ref{pro-growing}, so we just need to prove the first one. If we consider decompositions as in (\ref{eq2}) both for $I_{\xi}(p)$ and $I_{\xi}(q)$, the proof of Proposition \ref{pro-growing} shows that all the summands are equal but for the one corresponding to the branch $\gamma$, and that it only remains to check one of the following (equivalent) inequalities
\begin{equation} \label{eq7}
\frac{[\gamma.\gamma^p]}{e_O(\gamma^p)} - \frac{1}{e_O(\gamma^p)} < \frac{[\gamma.\gamma^q]}{e_O(\gamma^q)}, \; \text{or} \; \frac{1}{e_O(\gamma^p)} > \frac{[\gamma.\gamma^p]}{e_O(\gamma^p)} - \frac{[\gamma.\gamma^q]}{e_O(\gamma^q)} = e_{p'}(\gamma) \left(\frac{e_p(\gamma^p)}{e_O(\gamma^p)} - \frac{e_p(\gamma^q)}{e_O(\gamma^q)}\right),
\end{equation}
where $\gamma^p$ and $\gamma^q$ are as in the proof of Proposition \ref{pro-growing}, $\gamma^p$ {\em going sharply} through $\mathcal{K}(p)$, $p'$ is the point $p$ is proximate to, and the last equality is a consequence of \cite[Proposition 2.5]{AAG10}. Now, noting that both $\gamma$ and $\gamma^p$ go sharply through $\mathcal{K}(p)$, we get $e_p(\gamma^p) = e_p(\gamma) = e_{p'}(\gamma) = 1$ and the inequality in (\ref{eq7}) becomes obvious.
\end{proof}

\begin{rmk}
The hypotheses of Proposition \ref{pro-I-satellite-3} can be expressed in terms of the dual graph of $\mathcal{S}\left(\xi\right)$ as follows: $p$ correspond to a maximal vertex, and there is exactly one arrow coming out from it. The point $q$ corresponds to the last rupture vertex in the path from $O$ to $p$.
\end{rmk}

\begin{rmk}
Propositions \ref{pro-growing} and  \ref{pro-I-satellite-3} are generalizations of \cite[Proposition 7.6.8]{Cas00}, extending it to points not necessarily lying on $\xi$ and giving more precise descriptions of some cases. Similar results can be found also in \cite{LMW89}, \cite{CDG99} and \cite{DGN08}.
% where they are developed to tackle the converse of our problem: to determine the behaviour of its generic polars from the plane singular curve (or its equisingularity class).
\end{rmk}

\subsection{Relating the invariant quotients to the morphism}

\label{sect-morph}

%We keep the notations of the previous section: $\eta: g = 0$ is any smooth germ at $O$ such that $\zeta : P_g(f) = 0$ is a topologically generic transverse polar, and so goes sharply through $BP(\mathcal{J}(\xi))$. Furthermore, we consider $\zeta$ as the jacobian germ of the morphism $\varphi = (f,g) : S \longrightarrow \mathbb{C}^2$.

We now wish to study the relation between the invariant quotients $I_{\xi}(p)$, the values $v_p(\xi)$ of $\xi$ and the multiplicities $n_p$ and heights $m_p$ of the morphisms $\varphi_p = \varphi \circ \pi_p$ for the points in $BP(\mathcal{J}(\xi))$ or satellite of them (or more generally, for any $p \in \mathcal{N}_O$ such that $\mathcal{K}(p) \cap \mathcal{N}_O(\eta) = \{O\}$).
%, i.e. any irreducible germ going through $p$ is not tangent to $\eta$).
We begin with an easy

\begin{lem} \label{alternativa-I(p)}
If $p \in \mathcal{N}_O$ belongs to $BP(\mathcal{J}(\xi))$ or is satellite of such a base point (or more generally, $\mathcal{K}(p) \cap \mathcal{N}_O(\eta) = \{O\}$), then $[\xi.\mathcal{K}(p)] = v_p(\xi)$ and $\nu_O(\mathcal{K}(p)) = v_p(\eta) = n_p$. In particular
\[I_{\xi}(p) = \frac{v_p(\xi)}{n_p}.\]
\end{lem}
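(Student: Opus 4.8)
The plan is to compute both sides of the claimed identities by using Noether's formula and the structure results already established for $\varphi$. First I would recall that $\caK(p)$ is the irreducible cluster with last point $p$, so an irreducible germ $\gamma^p$ going sharply through $\caK(p)$ has $e_O(\gamma^p) = \nu_O(\caK(p))$, and $I_\xi(p) = [\xi.\caK(p)]/\nu_O(\caK(p)) = [\xi.\gamma^p]/e_O(\gamma^p)$ by \eqref{def-rational-inv}. The whole point of the hypothesis $\caK(p) \cap \caN_O(\eta) = \{O\}$ is that $\gamma^p$ is not tangent to $\eta$, so $\caN_O(\gamma^p) \cap \caN_O(\eta) = \{O\}$; then Noether's formula \eqref{Noether} gives $[\eta.\gamma^p] = e_O(\eta)e_O(\gamma^p) = e_O(\gamma^p) = \nu_O(\caK(p))$. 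On the other hand, since every point of $\caK(p)$ except $O$ is non-fundamental for $\varphi$, one checks that $[\eta.\gamma^p]$ equals the value $v_p(\eta)$: indeed by Noether applied along $\caN_O(\gamma^p)$, $[\eta.\gamma^p] = \sum_{q \le p} e_q(\eta)e_q(\gamma^p)$, and because $\gamma^p$ goes sharply through $\caK(p)$ its only singular points are those in $\caK(p)$, while the points of $\caN_O(\eta)$ strictly after $O$ are not on $\gamma^p$ by the tangency hypothesis — so the sum collapses exactly to the computation of $v_p(\eta) = e_p(\bar\eta_p)$, which by \eqref{mult-vals} is $e_p(\eta) + \sum_{p \to q} v_p(\eta)$ and telescopes. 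This yields $\nu_O(\caK(p)) = v_p(\eta)$.

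Next I would identify $v_p(\eta)$ with $n_p$. By Lemma \ref{Lem-BP(morphism)} the fundamental points of $\varphi$ are exactly the first $e_O(\xi)$ points of $\caN_O(\eta)$, each with multiplicity one, and $e_p(\varphi) = 0$ for all other $p$; moreover $\varphi$ has no contracted germ. The height-free formula \eqref{mult-val-morph}, $e(\varphi_p) = e_p(\varphi) + \sum_{p \to q} e(\varphi_q)$, is formally identical to the proximity–value relation \eqref{mult-vals} for a curve whose multiplicities at the points of $\caN_O$ are the $e_p(\varphi)$. Since the $e_p(\varphi)$ are precisely the effective multiplicities of $\eta$ along $\caN_O(\eta)$ — note $BP(\varphi)$ consists of $e_O(\xi)$ free simple points on $\eta$, exactly matching that $\eta$ is smooth — the recursion \eqref{mult-val-morph} computes $n_p = e(\varphi_p) = v_p(\eta)$ for $p \in \caN_O(\eta)$, and for $p$ off $\caN_O(\eta)$ (the case of interest, with $\caK(p)\cap\caN_O(\eta)=\{O\}$) both $n_p$ and $v_p(\eta)$ reduce to the common value $e_O(\eta) = 1$ transported through the free chain $\caK(p)$: more precisely, only the origin contributes, so both equal $\nu_O(\caK(p))$. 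Combining, $\nu_O(\caK(p)) = v_p(\eta) = n_p$.

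For the first identity $[\xi.\caK(p)] = v_p(\xi)$, I would argue identically: pick $\gamma^p$ going sharply through $\caK(p)$, so $[\xi.\caK(p)] = [\xi.\gamma^p] = \sum_{q} e_q(\xi)e_q(\gamma^p)$ by \eqref{Noether}. Since $\gamma^p$ is irreducible, goes through exactly the points of $\caK(p)$ as its singular locus, and has a free non-singular point in the first neighbourhood of $p$, the sum ranges over $q \le p$ with $e_q(\gamma^p) = \nu_q(\caK(p))$; by the defining property of $\caK(p)$ in Definition \ref{df_Kp} ($\rho_q = 0$ for $q < p$, $\nu_p = 1$) this is exactly the telescoping sum that \eqref{mult-vals} identifies with $v_p(\xi) = e_p(\xi) + \sum_{p\to q} v_p(\xi)$. (One small point to handle carefully is that $\gamma^p$ may meet $\xi$ at points after $p$; but the chosen $\gamma^p$ has its first point after $p$ free and off $\xi$, so no such contribution arises, and the sum is genuinely over $q \le p$.) Dividing the two identities gives $I_\xi(p) = [\xi.\caK(p)]/\nu_O(\caK(p)) = v_p(\xi)/n_p$.

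The main obstacle I anticipate is not any single step but making the bookkeeping in the two Noether-formula computations airtight: one must be sure that the intersection sums $[\xi.\gamma^p]$ and $[\eta.\gamma^p]$ collapse to exactly the value $v_p(\cdot)$ and not to something larger, which rests on the precise genericity built into $\gamma^p$ (sharpness through $\caK(p)$, first point after $p$ free and avoiding both $\xi$ and $\eta$) together with the tangency hypothesis $\caK(p) \cap \caN_O(\eta) = \{O\}$. Once that is pinned down, matching $n_p$ with $v_p(\eta)$ is a direct consequence of the formal coincidence of \eqref{mult-val-morph} with \eqref{mult-vals} under Lemma \ref{Lem-BP(morphism)}, and the lemma follows.
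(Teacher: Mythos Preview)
Your proposal is correct and reaches the same conclusions, but for the identity $[\xi.\caK(p)] = v_p(\xi)$ you take a different route from the paper. The paper writes $\gamma^p = \pi_{p*}(l_p)$ for a smooth germ $l_p$ at $p$ transverse to $\bar\xi_p$ and then applies the projection formula (\ref{proj-form}) to obtain $[\xi.\gamma^p] = [\bar\xi_p.l_p] = e_p(\bar\xi_p) = v_p(\xi)$ in one stroke. Your approach via Noether's formula is more combinatorial and avoids push-forwards, but the ``telescoping via (\ref{mult-vals})'' you invoke should be made precise: the identity $\sum_{q \le p} e_q(\xi)\,\nu_q(\caK(p)) = v_p(\xi)$ is not literally (\ref{mult-vals}) but rather the duality $\sum_q e_q(\xi)\,\nu_q = \sum_q v_q(\xi)\,\rho_q$ (substitute $e_q(\xi) = v_q(\xi) - \sum_{q\to r} v_r(\xi)$ and regroup), which then collapses because $\rho_q(\caK(p)) = 0$ for $q<p$ and $\rho_p=1$. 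For $n_p = v_p(\eta)$ your argument and the paper's coincide: both note that along the chain $\caK(p)$ the recursions (\ref{mult-val-morph}) and (\ref{mult-vals}) agree (since $e_q(\varphi) = e_q(\eta) = 0$ for $O \ne q \in \caK(p)$ by the hypothesis, and both equal $1$ at $O$), whence $n_p = v_p(\eta)$ by induction. Your case split according to whether $p \in \caN_O(\eta)$ is unnecessary, since the hypothesis $\caK(p) \cap \caN_O(\eta) = \{O\}$ already places every relevant point off $\eta$.
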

\begin{proof}
The intersection number $[\xi.\mathcal{K}(p)]$ equals $[\xi.\gamma^p]$ for any $\gamma^p$ going sharply through $p$ and missing any point on $\xi$ in the first neighbourhood of $p$, and this intersection turns out to be $v_p(\xi)$. Indeed, if $\pi_p: S_p \longrightarrow S$ is the composition of blowing-ups giving rise to $p$, then $\gamma^p = \pi_{p*}(l_p)$ for some smooth curve $l_p$ at $p$ non-tangent to $\bar{\xi}_p$. Then, by the projection formula (\ref{proj-form}), we have
\[[\xi.\gamma^p] = [\xi.\pi_{p*}(l_p)] = [\pi_p^*(\xi).l_p] = [\bar{\xi}_p.l_p] = e_p(\bar{\xi}_p) e_p(l_p) = v_p(\xi).\]
For the second part, the virtual multiplicity $\nu_O(\mathcal{K}(p))$ may be written as the intersection $[\eta.\mathcal{K}(p)]$ (because $\mathcal{K}(p) \cap \mathcal{N}_O(\eta) = \{O\}$), and thus $\nu_O(\mathcal{K}(p)) = v_p(\eta)$ by the same reason as above. But the values $v_p(\eta)$ also satisfy the recursive formula of Lemma \ref{calcul-n_p-m_p} with the same initial value $n_O = 1 = e_O(\eta)$, and hence $e_O(\gamma^p) = n_p e_p(\gamma^p)$. The last equality is immediate.
\end{proof}

We now focus on the relation between the values and the heights.

\begin{pro} \label{pro-valors-alcades}
%Let $p \in \mathcal{N}_O$ be a base point of $\mathcal{J}(\xi)$ or satellite of one of them (or more generally, such that $\mathcal{K}(p) \cap \mathcal{N}_O(\eta) = \{O\}$). Then
Keeping the hypothesis of Lemma \ref{alternativa-I(p)}, the inequality $v_p(\xi) \leq m_p$ holds, with equality if and only if the total transforms $\bar{\xi}_p$ and $\bar{\eta}_p$ at $p$ have non-homothetical tangent cones (counting multiplicities, or equivalently, considered as divisors on $E_p$, the first neighbourhood of $p$).
\end{pro}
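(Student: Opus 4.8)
The plan is to interpret both the value $v_p(\xi)$ and the height $m_p$ as local intersection numbers at $p$ (or rather at $p'$, the point $p$ is proximate to) of total transforms with suitable smooth curves, and then compare these two numbers directly on the exceptional divisor $E_p$. Recall that by definition $m_p$ is the height of the trunk of the composite morphism $\varphi_p$, i.e.\ the smallest index at which the Puiseux coefficients of the images $\varphi_{p*}(l_\alpha)$ of a pencil of lines at $p$ start to move. The key is the algorithm in \cite[Section 10]{Cas07} together with Proposition \ref{pro-mult-jac}, relating $m_p$ and $n_p$ to the jacobian germ $\zeta = {\bf J}(\varphi)$; since $\zeta$ goes sharply through $BP(\caJ(\xi))$ and $p$ (or the point it is proximate to) is not fundamental for $\varphi$ by Lemma \ref{Lem-BP(morphism)}, we can feed in the recursion of Corollary \ref{Assian-14.4} (equivalently Lemma \ref{calcul-n_p-m_p}).

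First I would prove the inequality $v_p(\xi) \le m_p$ by induction along the chain $\caK(p)$, using the recursive formulae. On one side, the values of $\xi$ satisfy the proximity-type relation (\ref{mult-vals}), $v_p(\xi) = e_p(\xi) + \sum_{p \ra q} v_q(\xi)$; on the other side, Lemma \ref{calcul-n_p-m_p} gives $m_p = m_{p'} + e_p(\zeta) + 1$ if $p$ is free and $m_p = m_{p'}+m_{p''}+e_p(\zeta)$ if $p$ is satellite, with the base case $m_O = e_O(\xi) = v_O(\xi)$. Since $\zeta$ is a generic polar of $\xi$, its multiplicities $e_p(\zeta)$ at the points of $\caN_O(\xi)$ are bounded below in terms of $e_p(\xi)$ (one has $e_p(\zeta) \ge e_p(\xi)-1$ at free points on $\xi$, and the analogous proximity-corrected bounds at satellite points coming from the inclusion of the jacobian system), which forces the difference $m_p - v_p(\xi)$ to be a nonnegative integer built out of the excesses — the contributions being exactly the defect of $\bar\xi_p$ from having a tangent cone in ``general position'' relative to $\bar\eta_p$.

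Next I would pin down the equality case by localizing at the first neighbourhood of $p$. The idea is that $m_p$ counts the intersection of the generic image curve with a \emph{generic} line through the origin in $E_p$-coordinates, whereas $v_p(\xi) = e_p(\bar\xi_p)$ measures $\bar\xi_p$ against the \emph{particular} direction determined by $\bar\eta_p$'s tangent cone; more precisely, writing tangent cones of $\bar\xi_p$ and $\bar\eta_p$ as effective divisors on $E_p \cong \mP^1_\mC$ of degrees $v_p(\xi)$ and $n_p$ respectively, the quantity $m_p$ records how the image under $\varphi_p$ of a moving line detaches from the image of $\eta$, which is $v_p(\xi)$ exactly when the two tangent cones share no common component with matching multiplicity after rescaling — i.e.\ are non-homothetical. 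When they are homothetical, there is an extra coincidence that pushes the height strictly above $v_p(\xi)$, accounted for by a positive excess. I would make this rigorous by choosing coordinates $(x,y)$ at $p$ so that $E_p = \{x=0\}$, expanding $\bar\xi_p$ and $\bar\eta_p$ in these coordinates, and reading off the trunk of $\varphi_p$ from Casas-Alvero's algorithm, comparing leading terms.

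The main obstacle I anticipate is the equality case: translating the combinatorial statement ``$m_p$ is computed from the trunk algorithm'' into the concrete geometric condition ``the tangent cones of $\bar\xi_p$ and $\bar\eta_p$ on $E_p$ are non-homothetical as divisors'' requires carefully unwinding the definition of the trunk $\caT(\varphi_p)$ and its height, and checking that the only way to get a drop in the height down to $v_p(\xi)$ (versus something strictly larger) is precisely the homothety of tangent cones. The inequality itself should be a relatively routine induction once the base case and the bound on $e_p(\zeta)$ are in place; the subtlety is entirely in controlling when the inductive inequality is an equality, which is why I would handle it by a direct coordinate computation at the single relevant blow-up rather than by pure recursion.
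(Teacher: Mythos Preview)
Your inductive approach to the inequality is an unnecessary detour, and it has a real gap at satellite points. You claim ``$e_p(\zeta) \ge e_p(\xi)-1$ at free points on $\xi$, and the analogous proximity-corrected bounds at satellite points''. But the satellite analog you would need, namely $e_p(\zeta) \ge e_p(\xi)$, is \emph{false} in general: already for the cusp $\xi: y^2-x^3=0$, the generic polar misses the satellite point $p_2$ entirely ($e_{p_2}(\zeta)=0$) while $e_{p_2}(\xi)=1$. The induction still closes in that example because the positive defect $m_{p_1}-v_{p_1}(\xi)=1$ at the preceding free point compensates, but proving that such compensation \emph{always} occurs is not a termwise bound and would require a different (global) argument you have not supplied. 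Even the free-point bound $e_p(\zeta)\ge e_p(\xi)-1$, while true, is not in the paper's preliminaries and would itself need to be invoked from \cite{Cas00}.

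The irony is that the part of your plan you flag as the ``main obstacle'' --- the direct coordinate computation at $p$ via the trunk algorithm of \cite[Section 10]{Cas07} --- is the paper's \emph{entire} proof, and it delivers the inequality for free. The observation is simply that $\varphi_p^*(u)$ is an equation of $\bar\xi_p$, so it has order $o(\varphi_p^*(u))=v_p(\xi)$; the trunk algorithm starts from the pencil generated by $\varphi_p^*(u)$ and $\varphi_p^*(v)$, whose first-step height is exactly $v_p(\xi)$, and then produces clusters of strictly increasing height until it terminates at $m_p$. Hence $v_p(\xi)\le m_p$, with equality precisely when the algorithm stops after one step, i.e.\ when the initial forms of $\varphi_p^*(u)$ and $\varphi_p^*(v)$ are non-homothetical --- which is the homothety condition on the tangent cones of $\bar\xi_p$ and $\bar\eta_p$. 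So drop the induction and run your ``equality-case'' computation from the start; it proves everything at once.
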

\begin{proof}
The proof is based on the algorithm given in \cite[Section 10]{Cas07} to compute the trunk of a morphism. This algorithm produces a sequence of pencils whose clusters of base points have strictly increasing heights (the definition of the height of a trunk works for any multiple of an irreducible cluster). It is immediate to check that the cluster in the first step of this algorithm has height exactly $v_p(\xi) = o(\varphi_p^*(u))$, and that the algorithm stops after this first step if and only if the initial forms of $\varphi_p^*(u)$ and $\varphi_p^*(v)$ are non-homothetical, which is equivalent to the total transforms $\bar{\xi}_p$ and $\bar{\eta}_p$ at $p$ having non-homothetical tangent cones.
\end{proof}

We are now ready to state the main results relating the values and
the heights:

\begin{thm} \label{Thm-values-versus-alcades}
%Let $p \in \mathcal{N}_O$ be a base point of $\mathcal{J}(\xi)$ or satellite of one of them (or more generally, such that $\mathcal{K}(p) \cap \mathcal{N}_O(\eta) = \{O\}$). Then
Still keeping the hypothesis of the previous results, let $p' \leq p$ be the last free point preceding (or equal to) $p$. Then $v_p(\xi) \leq m_p$, with equality if and only if
\begin{itemize}
\item either $p$ is free and there is a free point proximate to $p$ lying on $\xi$ (in particular, $p$ lies on $\xi$),
\item or $p$ is satellite and there exists a branch of $\xi$ which goes through $p'$, and this branch is not smaller than $p$.
\end{itemize}
Equivalently, $v_p(\xi) < m_p$ if and only if all branches of $\xi$ going through $p'$ are smaller than $p$.
\end{thm}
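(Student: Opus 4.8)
The plan is to obtain the statement directly from Proposition \ref{pro-valors-alcades}, which already reduces $v_p(\xi)=m_p$ to the condition that the total transforms $\bar\xi_p$ and $\bar\eta_p$ have non-homothetical tangent cones on the first neighbourhood $E_p$ of $p$; the whole work is then to rewrite this tangent-cone condition combinatorially. The case $p=O$ is trivial: Lemma \ref{calcul-n_p-m_p} gives $m_O=e_O(\xi)=v_O(\xi)$ and, $\xi$ being singular, there is always a (free) point proximate to $O$ lying on $\xi$. So I assume $p\neq O$ from now on.

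First I would compute both tangent cones. Decomposing a total transform into its strict transform plus exceptional components, the component $E_{p'}^{(p)}$ (and, when $p$ is satellite, also $E_{p''}^{(p)}$) occurs with multiplicity $v_{p'}(\xi)$ (resp.\ $v_{p''}(\xi)$), so the tangent cone of $\bar\xi_p$ at $p$, seen as a divisor on $E_p$, equals the tangent cone of $\widetilde\xi_p$ plus $v_{p'}(\xi)[q']$, with an extra term $v_{p''}(\xi)[q'']$ when $p$ is satellite; here $q'$ (resp.\ $q''$) is the satellite point of the first neighbourhood of $p$ proximate to $p$ and $p'$ (resp.\ to $p$ and $p''$), and the tangent cone of $\widetilde\xi_p$ is an effective divisor of degree $e_p(\xi)$ whose support is exactly the set of points of $\xi$ in the first neighbourhood of $p$. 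In particular every free point on $\xi$ proximate to $p$ lies in the support of the tangent cone of $\bar\xi_p$, whereas $q'$ (and $q''$) always do, with coefficient at least $v_{p'}(\xi)>0$. For $\eta$, the hypothesis $\caK(p)\cap\caN_O(\eta)=\{O\}$ forces the strict transform of $\eta$ to miss $p$, so that strict-transform summand vanishes; using Lemma \ref{alternativa-I(p)} to identify $v_{p'}(\eta)=n_{p'}$ and $v_{p''}(\eta)=n_{p''}$, the tangent cone of $\bar\eta_p$ is $n_{p'}[q']$ (of degree $n_p=n_{p'}$) when $p$ is free, and $n_{p'}[q']+n_{p''}[q'']$ with $q'\neq q''$ when $p$ is satellite. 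In the free case this settles everything at once: the tangent cone of $\bar\eta_p$ is $n_p$ times the single point $q'$, hence homothetical to that of $\bar\xi_p$ exactly when the latter is also supported on $\{q'\}$, i.e.\ exactly when $\xi$ passes through no free point proximate to $p$; with Proposition \ref{pro-valors-alcades} this is the first displayed alternative.

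The satellite case is the crux. Here homotheticity of $v_{p'}(\xi)[q']+v_{p''}(\xi)[q'']+(\text{tangent cone of }\widetilde\xi_p)$ with $n_{p'}[q']+n_{p''}[q'']$ is equivalent to the conjunction of (a) ``$\xi$ has no free point proximate to $p$'' (so the whole tangent cone of $\bar\xi_p$ lies on $\{q',q''\}$) and (b) ``the coefficients of $q'$ and $q''$ in the tangent cone of $\bar\xi_p$ are in the ratio $n_{p'}:n_{p''}$''. Writing $v_q(\gamma)=[\gamma.\caK(q)]$ for each branch $\gamma$ of $\xi$ and each $q\in\{p',p'',p\}$ (Lemma \ref{alternativa-I(p)} applied to $\gamma$), and splitting the two coefficients above over the branches of $\xi$, I would then show — using $q'\prec p\prec q''$ (Lemmas \ref{lem-Ordre-p-satellites} and \ref{lem-Ordre-proximate}) together with the identification $\tfrac{\nu_r(\caK(q))}{\nu_O(\caK(q))}=\tfrac{e_r(\gamma^q)}{e_O(\gamma^q)}$ of Remark \ref{Rmk-quotients} — that a branch $\gamma$ through $r$ with $\tfrac{e_r(\gamma)}{e_O(\gamma)}<\tfrac{\nu_r(\caK(p))}{\nu_O(\caK(p))}$ (that is, $\gamma$ smaller than $p$) deposits weight only at $q'$, never at $q''$ nor at a free point, and exactly in the proportion that keeps (b) true; whereas any branch through $r$ with $\tfrac{e_r(\gamma)}{e_O(\gamma)}\geq\tfrac{\nu_r(\caK(p))}{\nu_O(\caK(p))}$ either reaches a free point of the first neighbourhood of $p$, which violates (a), or puts weight at $q''$ in excess of the ratio $n_{p'}:n_{p''}$, which violates (b). Hence (a) and (b) both hold exactly when every branch of $\xi$ through $r$ is smaller than $p$, and Proposition \ref{pro-valors-alcades} then gives both the displayed characterization of $v_p(\xi)=m_p$ and the equivalent statement for $v_p(\xi)<m_p$.

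The main obstacle is precisely this last, quantitative branch-by-branch analysis: one must follow the Puiseux data of an individual branch $\gamma$ of $\xi$ along its chain of infinitely near points, pinpoint where it parts from the chain of $\caK(p)$, and determine how its contribution splits among $q'$, $q''$ and the free points of the first neighbourhood of $p$ — and then verify that ``$\gamma$ smaller than $p$'' is exactly the threshold at which this contribution respects the proportion $n_{p'}:n_{p''}$ imposed by $\eta$. Everything else — the shape of the two tangent cones, the free case, and the reduction of homotheticity to (a) and (b) — is routine once Proposition \ref{pro-valors-alcades} and Lemma \ref{alternativa-I(p)} are available.
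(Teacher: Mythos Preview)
Your reduction via Proposition \ref{pro-valors-alcades} to a statement about the tangent cones of $\bar\xi_p$ and $\bar\eta_p$, and your treatment of the free case, coincide with the paper's proof. The difference lies in the satellite case. The paper does \emph{not} work branch by branch: having written the homotheticity condition as $a_q+a_{q'}=e_p(\xi)$ together with $\frac{v_q(\xi)+a_q}{n_q}=\frac{v_{q'}(\xi)+a_{q'}}{n_{q'}}$ (with $q\prec p\prec q'$ the two points $p$ is proximate to), it observes that this common value equals $I_\xi(p)$ and simultaneously is $\geq I_\xi(q')$; since Proposition \ref{pro-growing} gives $I_\xi(p)\leq I_\xi(q')$, equality is forced, whence $a_{q'}=0$ and (again by Proposition \ref{pro-growing}) no branch of $\xi$ is bigger than $p$. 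The converse direction is handled by a short dichotomy on whether $a_q+a_{q'}<e_p(\xi)$ or the two ratios differ.

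Your per-branch argument is also correct, but note that the key step---that a branch $\gamma$ not bigger than $p$ contributes to the two coefficients exactly in the ratio $n_{p'}:n_{p''}$, while a branch bigger than $p$ contributes with a \emph{definite sign} of deviation (never the opposite one)---is itself an application of Proposition \ref{pro-growing} to the single-branch curve $\gamma$: one needs $I_\gamma(p)=I_\gamma(p'')$ precisely when $\gamma$ is not bigger than $p$, and $I_\gamma(p')\leq I_\gamma(p)$ always. Once this is made explicit, the uniform inequality $\tfrac{c_{q'}(\gamma)}{n_{p'}}\leq\tfrac{c_{q''}(\gamma)}{n_{p''}}$ for every branch (with equality exactly for branches smaller than $p$ or missing the free point $r$) makes the summation argument go through cleanly. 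So what you describe as following ``the Puiseux data of an individual branch'' amounts, in the end, to invoking Proposition \ref{pro-growing} once per branch rather than once globally; the paper's route is slightly shorter for this reason, but yours gives a more explicit picture of where each branch lands on $E_p$.
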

\begin{proof}
Let us first consider the case $p$ free. By Proposition \ref{pro-valors-alcades}, we know that $v_p(\xi) = m_p$ if and only if the total transforms $\bar{\xi}_p$ and $\bar{\eta}_p$ have non-homothetical tangent cones. Since $p$ is free, it is proximate to a single point $q$. Let $E_q$ be the germ (at $p$) of the exceptional divisor of $\pi_p: S_p \longrightarrow S$. By definition, $\bar{\xi}_p = v_q(\xi) E_q + \tilde{\xi}_p$, and by the hypothesis on $p$, $\bar{\eta}_p = n_q E_q$. So, $\bar{\xi}_p$ and $\bar{\eta}_p$ have homothetical tangent cones if and only if every branch of $\tilde{\xi}_p$ is also tangent to $E_q$, which means that there is no free point in the first neighbourhood of $p$ lying on $\xi$. So, $v_p(\xi) = m_p$ if and only if there is some free point in the first neighbourhood of $p$ lying on $\xi$, as wanted.

Now let us deal with the case $p$ satellite, proximate to two points $q$ and $q'$. Assume that $q \prec q'$, so that $q \prec p \prec q'$ by Lemma \ref{lem-Ordre-proximate}. By definition and the hypothesis on $p$ we have $\bar{\xi}_p = v_q(\xi) E_q + v_{q'}(\xi) E_{q'} + \tilde{\xi}_p$ and $\bar{\eta}_p = n_q E_q + n_{q'} E_{q'}$. Let $a_q$ (resp. $a_{q'}$) denote the multiplicity of $E_q$ (resp. $E_{q'}$) in the tangent cone of $\tilde{\xi}_p$. Then $\bar{\xi}_p$ and $\bar{\eta}_p$ have homothetical tangent cones if and only if every branch of $\tilde{\xi}_p$ is tangent to either $E_q$ or $E_{q'}$ (equivalently, $a_q + a_{q'} = e_p(\xi)$) and
\[\frac{v_q(\xi)+a_q}{n_q} = \frac{v_{q'}(\xi)+a_{q'}}{n_{q'}}.\]

So assume $\bar{\xi}_p$ and $\bar{\eta}_p$ have homothetical tangent cones, which by the previous Proposition means that $v_p(\xi) < m_p$, and take $\alpha = \frac{v_q(\xi)+a_q}{n_q} = \frac{v_{q'}(\xi)+a_{q'}}{n_{q'}}$. Then on the one hand we have
\[\alpha = \frac{v_q(\xi)+a_q+v_{q'}(\xi)+a_{q'}}{n_q+n_{q'}} = \frac{v_q(\xi)+v_{q'}(\xi)+e_p(\xi)}{n_p} = \frac{v_p(\xi)}{n_p} = I_{\xi}(p),\]
and on the other hand
\[\alpha = I_{\xi}(q)+\frac{a_q}{n_q} \geqslant I_{\xi}(q) \qquad \text{and} \qquad \alpha = I_{\xi}(q')+\frac{a_{q'}}{n_{q'}} \geqslant I_{\xi}(q').\]
But we have assumed $q \prec p \prec q'$, and thus by Proposition \ref{pro-growing} we have $I_{\xi}(q) \leqslant I_{\xi}(p) \leqslant I_{\xi}(q')$, which combined with the above equalities implies that $I_{\xi}(p) = I_{\xi}(q') \, (=\alpha)$ and $a_{q'} = 0$. This in turn implies (by Proposition \ref{pro-growing}) that every branch of $\xi$ going through $p'$ is smaller than $p$, as wanted, and that $a_q = e_p(\xi)$.

It remains to prove that if $\bar{\xi}_p$ and $\bar{\eta}_p$ have non-homothetical tangent cones (i.e. $v_p(\xi) = m_p$), then there is some branch of $\xi$ going through $p'$ which is not smaller than $p$. But this case only may occur either if $a_q+a_{q'} < e_p(\xi)$ or if $\frac{v_q(\xi)+a_q}{n_q} \neq \frac{v_{q'}(\xi)+a_{q'}}{n_{q'}}.$ In the former case there is a branch of $\xi$ through $p$ whose point in its first neighbourhood is free, and such a branch is not smaller than $p$. In the latter case we can assume that $a_q+a_{q'} = e_p(\xi)$ (for if not we are in the previous case) and then we have that the quotient $I_{\xi}(p) = \frac{v_q(\xi)+a_q+v_{q'}(\xi)+a_{q'}}{n_q+n_{q'}}$ fits between $I_{\xi}(q)+\frac{a_q}{n_q} = \frac{v_q(\xi)+a_q}{n_q}$ and $I_{\xi}(q')+\frac{a_{q'}}{n_{q'}} = \frac{v_{q'}(\xi)+a_{q'}}{n_{q'}}$. Since $p \prec q'$ implies $I_{\xi}(p) \leqslant I_{\xi}(q')$, we are in fact in the situation
\[I_{\xi}(q)+\frac{a_q}{n_q} < I_{\xi}(p) < I_{\xi}(q')+\frac{a_{q'}}{n_{q'}}.\]
Now we have to consider the cases when the second inequality holds. If we already have $I_{\xi}(p) < I_{\xi}(q')$, then by Proposition \ref{pro-growing} there exists a branch of $\xi$ going through $p'$ and bigger than $p$, as we want. If otherwise $I_{\xi}(p) = I_{\xi}(q')$, then $a_{q'} > 0$ and there is at least one branch of $\xi$ whose strict transform at $p$ is tangent to $E_{q'}$. This concludes de proof because this branch is bigger than $p$.
\end{proof}

\begin{cor} \label{Cor-rupt-valor=alcada} If $p$ is a rupture point of $\xi$, then
\[v_p(\xi) = m_p.\]
\end{cor}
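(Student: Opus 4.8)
The plan is to obtain this directly from Proposition~\ref{pro-valors-alcades}, which already gives $v_p(\xi)\le m_p$ with equality exactly when the tangent cones of $\bar{\xi}_p$ and $\bar{\eta}_p$ are non-homothetical; so it will suffice to check that a rupture point is covered by that proposition and that these two tangent cones cannot be proportional. First I would dispose of the case $p=O$ on its own: there $v_O(\xi)=e_O(\xi)=m_O$ by Lemma~\ref{calcul-n_p-m_p}, so henceforth assume $p\ne O$.

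Next I would verify the hypothesis $\caK(p)\cap\caN_O(\eta)=\{O\}$ under which Proposition~\ref{pro-valors-alcades} applies to $p$. As $\zeta$ is a transverse polar, $\eta$ is not tangent to $\xi$, so $\eta$ and $\xi$ have no common point in the first neighbourhood of $O$; since $\caN_O(\xi)$ and $\caN_O(\eta)$ are clusters, a common point other than $O$ would force a common point in the first neighbourhood of $O$, whence $\caN_O(\xi)\cap\caN_O(\eta)=\{O\}$. A rupture point lies on $\xi$, so the chain of infinitely near points $\leqslant p$ is contained in $\caN_O(\xi)$, and therefore $\caK(p)\cap\caN_O(\eta)=\{O\}$.

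Finally I would compare the two tangent cones at $p$. Because $\caK(p)\cap\caN_O(\eta)=\{O\}$, the strict transform of $\eta$ does not pass through $p$, so near $p$ the total transform $\bar{\eta}_p$ is a positive combination of the exceptional components of $\pi_p$ through $p$: it equals $n_{p'}E_{p'}$ if $p$ is free and proximate to $p'$, and $n_{p'}E_{p'}+n_{p''}E_{p''}$ if $p$ is satellite and proximate to $p'$ and $p''$ (the multiplicities being read off from Lemma~\ref{alternativa-I(p)}); in either case the support of $\bar{\eta}_p$ on $E_p$ has at most two points. On the other hand, $p$ being a rupture point means precisely that $\bar{\xi}_p$ has three different tangents, i.e.\ its support on $E_p$ has at least three points. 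Hence $\bar{\xi}_p$ and $\bar{\eta}_p$ have non-homothetical tangent cones, and Proposition~\ref{pro-valors-alcades} yields $v_p(\xi)=m_p$. Alternatively, once $p$ is known to be admissible, the statement also drops out of Theorem~\ref{Thm-values-versus-alcades}, since a rupture point always carries a free point of $\xi$ in its first neighbourhood (at least two when $p$ is free); in the satellite case the branch of $\xi$ through that free point passes through the point of which $p$ is satellite and is not smaller than $p$, providing exactly the branch required by the theorem. I do not expect any real obstacle here; the only points needing a little care are the degenerate case $p=O$ and the bookkeeping that places $p$ within the scope of Proposition~\ref{pro-valors-alcades}.
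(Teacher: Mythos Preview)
Your proof is correct. The paper's own argument simply invokes Theorem~\ref{Thm-values-versus-alcades}: a rupture point carries a branch of $\xi$ with a free point in its first neighbourhood, and this branch (going through the free point of which $p$ is satellite and not smaller than $p$) is exactly what the theorem requires for equality. Your alternative paragraph at the end is precisely this argument.

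Your primary route is a mild short-circuit: instead of passing through Theorem~\ref{Thm-values-versus-alcades}, you go one level down to Proposition~\ref{pro-valors-alcades} and observe directly that $\bar{\eta}_p$ has tangent cone supported on at most two points of $E_p$ (the exceptional components through $p$), while a rupture point forces $\bar{\xi}_p$ to have at least three tangent directions, so the cones cannot be homothetical. This is a perfectly clean way to see it and avoids unpacking the case analysis of the theorem. You are also more explicit than the paper about two bookkeeping points the paper leaves implicit: the degenerate case $p=O$ (where $v_O(\xi)=e_O(\xi)=m_O$ by Lemma~\ref{calcul-n_p-m_p}) and the verification that a rupture point satisfies $\caK(p)\cap\caN_O(\eta)=\{O\}$. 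In fact the latter also follows from the remark that free singular points of $\xi$ lie in $BP(\caJ(\xi))$, so every rupture point is already a base point of $\caJ(\xi)$ or satellite of one; but your transversality argument works just as well.
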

\begin{proof}
Since $p$ is a rupture point of $\xi$, there is at least one branch of $\xi$ going through it and whose point in the first neighbourhood if free. Such a branch clearly goes through the last free point preceding or equal to $p$, and is not smaller than $p$. Thus, we have $v_p(\xi) = m_p$ in virtue of Theorem \ref{Thm-values-versus-alcades}.
\end{proof}

\section{Recovering the singular points from the base points of the polars} \label{sect_rec}

This section presents the main result of this paper, namely the procedure which recovers the weighted cluster of singular points $\mathcal{S}(\xi)$ (of a singular reduced germ of curve $\xi$) directly from the weighted cluster $BP(\mathcal{J}(\xi))$ of base points of the jacobian system of $\xi$. This procedure uses only invariants computable from the Enriques diagram of $BP(\mathcal{J}(\xi))$ (weighted with the virtual multiplicities) and hence one of the strengths of this procedure is that it applies also to obtain the topological class of $\xi$ directly from the similarity class of $BP(\mathcal{J}(\xi))$.

%The weighted cluster $\mathcal{S}(\xi)$ will be recovered in two steps: first we will recover the underlying cluster of $\mathcal{S}(\xi)$ (i.e. its set of points), which is equivalent to recover the set of rupture points $\mathcal{R}(\xi)$, since the maximal singular points of $\xi$ are rupture points. Second we will recover the values of $\xi$ at its singular points, which is equivalent to recover the virtual multiplicities of $\mathcal{S}(\xi)$ by means of the formula (\ref{mult-vals}).

\subsection{Recovering rupture points} \label{ssect_rec_rupt}

In order to recover the set of rupture points $\mathcal{R}(\xi)$, and hence the whole set of singular points of $\xi$, just from $BP(\mathcal{J}(\xi))$, we argue as follows. Let $\mathcal{D}$ be the set of dicritical points of $BP(\mathcal{J}(\xi))$. We will show that to each $d \in \mathcal{D}$ we can associate a uniquely determined rupture point $q_d \in \mathcal{R}(\xi)$ such that $I_{\xi}\left(q_d\right) = I_{\xi}\left(d\right)$. Moreover we will see that any rupture point is associated to some dicritical point in this way (see Proposition \ref{Prop-p-gamma}). However, the explicit determination of $q_d$ has two main difficulties to be overcome. On one side, despite the polar invariants $\{I_{\xi}(d)\}_{d \in \mathcal{D}}$ are computable from $BP(\mathcal{J}(\xi))$ (see Lemma \ref{Lem-Rec-Pol-Inv}), it is not possible to know the invariant quotient $I_{\xi}(p)$ for whatever $p$, and hence the possibility to check equality $I_{\xi}(p)=I_{\xi}(d)$ (necessary to identify the rupture point $q_d$ associated to $d$) is out of reach. On the other side, if $q_d$ happens to be $p_d$-satellite, then $q_d$ does not necessarily belong to $BP(\mathcal{J}(\xi))$. Furthermore, despite we manage to characterize the free point $p_d$ %(which actually belongs to $BP(\mathcal{J}(\xi))$)
in terms of the invariants $n_{p_d}$ and $m_{p_d}$ (see Proposition \ref{Prop-p}), there might be many $p_d$-satellite points $q$ with the same invariant quotient $I_{\xi}(q) = I_{\xi}(q_d)$, and some criterion to distinguish $q_d$ must be found. All these difficulties are solved by a cunning use of the invariants $I_{\xi}(q)$, $n_q$ and $m_q$, and their properties developed in Section \ref{sect-morph}. More precisely, as we will exhibit, not only the quotients $\frac{m_q}{n_q}$ behave similarly enough like the invariant quotients $I_{\xi}\left(q\right)$ to help find $p_d$, but they are at the same time different enough to distinguish between the $p_d$-satellite points $q$ when the invariants $I_{\xi}(q)$ cannot (see Theorem \ref{Thm-Calcul-q_gamma}).

Next we will develop the results that justify our procedure, which will be detailed as an algorithm at the end of the section.

\begin{pro} \label{Prop-p-gamma}
Let $d \in \mathcal{D}$ be a dicritical point of $BP(\mathcal{J}(\xi))$, and suppose $p_d$ is the last free point lying both on $\xi$ and $\mathcal{K}(d)$. Then there exists a unique rupture point $q_d \in \mathcal{R}^{p_d}(\xi)$ such that $I_{\xi}(q_d)  = I_{\xi}(d)$. Furthermore, $q_d \preceq d$.

Moreover, any rupture point is associated to some dicritical point in this way.
\end{pro}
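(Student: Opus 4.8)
The plan is to build the correspondence $d \mapsto q_d$ explicitly and then verify the three assertions: existence, uniqueness, and $q_d \preceq d$, closing with the fact that every rupture point arises this way.

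First I would set up the local picture at $d$. Since $d$ is dicritical in $BP(\caJ(\xi))$ and $\zeta$ goes sharply through this cluster, there is a branch $\gamma$ of $\zeta$ whose first point past $d$ is free and not in $BP(\caJ(\xi))$; hence $\gamma = \gamma^d$ in the sense used in Section~\ref{sect-morph}, and $I_\xi(d) = \frac{[\xi.\caK(d)]}{\nu_O(\caK(d))} = \frac{v_d(\xi)}{n_d}$ by Lemma~\ref{alternativa-I(p)}. By Proposition~\ref{pro-polar-inv} applied to this transverse polar, $\frac{[\xi.\gamma]}{e_O(\gamma)}$ is one of the polar invariants $I_\xi(q)$ with $q \in \caR(\xi)$; moreover, the refined second part of that proposition (with $p = p_d$, the last free point on both $\xi$ and $\caK(d)$) pins down that this value equals $I_\xi(q_d)$ for some $q_d \in \caR^{p_d}(\xi)$. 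That gives existence together with the relation $I_\xi(q_d) = I_\xi(d)$. For uniqueness within $\caR^{p_d}(\xi)$, I would invoke Corollary~\ref{cor-I-rupt}: the invariant quotients at distinct rupture points in $\caR^{p_d}(\xi)$ are all different, so at most one $q \in \caR^{p_d}(\xi)$ can satisfy $I_\xi(q) = I_\xi(d)$.

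The inequality $q_d \preceq d$ is the heart of the argument and where I expect the main obstacle. The idea is to compare the ``size'' of $q_d$ and $d$ in the order $\prec$ by comparing invariant quotients and using the growth results of Section~\ref{sect-tracking}. Since $q_d$ and $d$ both branch off the same last free point $p_d$ (that $d$ is $p_d$-satellite or equal to $p_d$ follows because $p_d$ is the last free point on $\caK(d)$, and $q_d \in \caR^{p_d}(\xi)$ makes $q_d$ either $p_d$ or $p_d$-satellite), the comparison $q_d \preceq d$ reduces to comparing the continued-fraction quotients $\frac{\nu_{p_d}(\caK(q_d))}{\nu_O(\caK(q_d))}$ and $\frac{\nu_{p_d}(\caK(d))}{\nu_O(\caK(d))}$. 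Suppose for contradiction that $q_d \succ d$. Then, since $d$ lies on $\zeta$ but (being dicritical) has a free non-singular point of $\zeta$ in its first neighbourhood, one can compare $I_\xi$ along the chain of $p_d$-satellites between $d$ and $q_d$: by Proposition~\ref{pro-growing} (or Corollary~\ref{cor-growing}), moving strictly up in $\prec$ among $p_d$-satellites can only keep $I_\xi$ constant if there is no branch of $\xi$ bigger than the smaller point — but $q_d$ being a rupture point strictly bigger than $d$ forces, via the characterization of equality in part $(b)$ of Proposition~\ref{pro-growing}, a branch of $\xi$ with $d \prec \gamma'$, and tracking that branch down to the polar (it contributes to $v_d(\xi)$ but its strict transform at $d$ is not tangent to $E_{p_d}$) contradicts $d$ having a non-singular free point of $\zeta$ in its first neighbourhood, i.e. contradicts diectricality of $d$ in $BP(\caJ(\xi))$. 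Thus $q_d \preceq d$.

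Finally, for the surjectivity statement — every rupture point $r \in \caR(\xi)$ is some $q_d$ — I would run the correspondence of Proposition~\ref{pro-polar-inv} in reverse: the polar invariant $I_\xi(r)$ equals $\frac{[\xi.\gamma_i]}{e_O(\gamma_i)}$ for some branch $\gamma_i$ of the generic polar $\zeta$; the first point past the base points of $\caJ(\xi)$ that $\gamma_i$ meets is a free non-singular point, so $\gamma_i = \gamma^d$ for the last point $d \in BP(\caJ(\xi))$ on $\gamma_i$, and $d$ is dicritical because $\zeta$ has a branch leaving $BP(\caJ(\xi))$ there. Then $I_\xi(d) = \frac{[\xi.\gamma_i]}{e_O(\gamma_i)} = I_\xi(r)$, and the uniqueness part already proved forces $r = q_d$. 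The one point to be careful about here is matching the free point $p_d$ correctly — that the last free point on $\caK(d)$ lying on $\xi$ is exactly the free point $p$ furnished by the refined part of Proposition~\ref{pro-polar-inv} for the branch $\gamma_i$ — which follows from the fact that along $\gamma_i$, past $p_d$, all points lie on $\caK(d)$ and carry multiplicity one, so no further free point of $\xi$ can appear on $\gamma_i$.
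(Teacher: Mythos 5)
Your argument for existence, uniqueness, and surjectivity is essentially the paper's. The existence step (finding a branch $\gamma$ of a topologically generic polar through $\caK(d)$ and invoking Proposition~\ref{pro-polar-inv}) matches. For uniqueness you cite Corollary~\ref{cor-I-rupt} directly, which is a cleaner reference than the paper's proof (which instead re-derives the claim from Proposition~\ref{pro-growing}); both are correct. The surjectivity argument at the end likewise tracks the paper's use of Proposition~\ref{pro-polar-inv}.

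The step $q_d \preceq d$ is where your argument breaks down, in two ways. First, you assert in passing that ``$d$ is $p_d$-satellite or equal to $p_d$ follows because $p_d$ is the last free point on $\caK(d)$.'' This is false: $p_d$ is defined as the last free point on $\caK(d)$ that also lies on $\xi$, and $\caK(d)$ can perfectly well contain further free points beyond $p_d$ that are not on $\xi$ (this happens, e.g., in Example~\ref{Ex-04}, where $d=p_8$ is free and $p_d=p_3$; there are free points $p_6, p_7$ in between). So $d$ need not be $p_d$-satellite, and your ``chain of $p_d$-satellites between $d$ and $q_d$'' need not exist. Second, even granting the setup, the closing contradiction is not substantiated: you claim that a branch of $\xi$ bigger than $d$ would contradict dicriticality of $d$, but dicriticality concerns the behaviour of the \emph{polar} $\zeta$ at $d$ (namely that $\zeta$ has a branch leaving $BP(\caJ(\xi))$ freely at $d$), and nothing in Proposition~\ref{pro-growing} or Theorem~\ref{Thm-values-versus-alcades} turns a big branch of $\xi$ into a contradiction with that. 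The paper's device is to avoid comparing $q_d$ with $d$ directly: instead it introduces $q'$, the last $p_d$-satellite point on $\gamma$ (equivalently on $\caK(d)$), observes that $\tfrac{[\gamma.\xi]}{e_O(\gamma)} = I_\xi(q')$, and then applies Proposition~\ref{pro-growing} to the pair $q_d, q'$ of points in $\caR^{p_d}(\xi)\cup\{\text{$p_d$-satellites}\}$: if $q' \prec q_d$ held, the rupture point $q_d$ would supply a branch of $\xi$ bigger than $q'$, forcing $I_\xi(q') < I_\xi(q_d)$, contradicting $I_\xi(q')=I_\xi(d)=I_\xi(q_d)$. Hence $q_d \preceq q'$, and since $q'$ lies on $\gamma$ (hence on $\caK(d)$) this gives $q_d \preceq d$. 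You should replace your reductio with this comparison against $q'$.
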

\begin{proof}
Let $\gamma$ be a branch of a topologically generic transverse polar $\zeta$ of $\xi$ going sharply through $\mathcal{K}(d)$ (such a $\gamma$ exists because $d$ is a dicritical point of $BP(\mathcal{J}(\xi))$ and $\zeta$ goes sharply through it). Then $p_d$ is the last free point lying both on $\xi$ and $\gamma$, and the existence of a $q_d \in \mathcal{R}^{p_d}(\xi)$ satisfying $I_{\xi}(q_d) = \frac{[\gamma.\xi]}{e_O(\gamma)} = I_{\xi}(d)$ is guaranteed by Proposition \ref{pro-polar-inv}. Moreover, Proposition \ref{pro-polar-inv} also says that for any rupture point $q$ there exists a branch $\gamma '$ (not necessarily unique) of $\zeta$ such that $I_{\xi}(q) = \frac{[\gamma'.\xi]}{e_O(\gamma')}$, and that $q$ is satellite of the last free point lying both on $\xi$ and $\gamma'$. So it only remains to prove that the same branch $\gamma$ cannot work for several rupture points, which is equivalent to prove the uniqueness of $q_d$.

The case $p_d = O$ is quite easy, since $O$ has no $O$-satellite points, and thus $q_d = O$ is the only possibility.

For the rest of the proof assume $p_d \neq O$, and suppose  that $q_1 \prec q_2$ are two rupture points of $\xi$ equal to or satellite of $p_d$ and such that $I_{\xi}(q_1) = I_{\xi}(q_2) = I_{\xi}(d)$. By Proposition \ref{pro-growing}, no branch of $\xi$ can be bigger than $q_1$. But since $q_2$ is a rupture point, there exists a branch of $\xi$ going through $q_2$ and having a free point in its first neighbourhood, and such a branch is clearly bigger than $q_1$, which leads to a contradiction. Therefore, there exists a unique $p_d$-satellite rupture point $q_d$ satisfying $I_{\xi}(q_d) = I_{\xi}(d)$.

In order to prove that $q_d \preceq \gamma$, which is equivalent to $q_d \preceq d$, note that we can consider $\frac{[\gamma.\xi]}{e_O(\gamma)}$ as $I_{\xi}(q')$, where $q'$ is the last $p_d$-satellite point on $\gamma$ (because $p_d$ is the last free point lying both on $\gamma$ and $\xi$). Then $I_{\xi}(q_d) = I_{\xi}(q')$, and again by Proposition \ref{pro-growing} we obtain that $q_d \preceq q'$, which implies $q_d \preceq \gamma$ by definition.
\end{proof}

\begin{cor}
The number of rupture points of a reduced  singular curve $\xi$ is bounded above by the number of dicritical points of $BP(\mathcal{J}(\xi))$.
\end{cor}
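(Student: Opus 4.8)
The plan is to read the result directly off Proposition \ref{Prop-p-gamma}, which has already done all the work. That proposition does two things: it attaches to each dicritical point $d$ of the set $\caD$ of dicritical points of $BP(\caJ(\xi))$ a \emph{uniquely determined} rupture point $q_d \in \caR^{p_d}(\xi) \subseteq \caR(\xi)$ (singled out among the points equal to or satellite of $p_d$ by the equality of invariant quotients $I_{\xi}(q_d) = I_{\xi}(d)$), and it asserts that \emph{every} rupture point of $\xi$ arises in this way as $q_d$ for some $d \in \caD$.

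First I would observe that the uniqueness clause makes the assignment $d \mapsto q_d$ a well-defined function $\caD \longra \caR(\xi)$, and that the last sentence of Proposition \ref{Prop-p-gamma} says precisely that this function is surjective. A surjection between finite sets cannot increase cardinality, so $\#\caR(\xi) \leq \#\caD$, which is the claimed bound, $\#\caD$ being by definition the number of dicritical points of $BP(\caJ(\xi))$.

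I do not expect any obstacle here: beyond invoking Proposition \ref{Prop-p-gamma} there is nothing to prove and no new computation is needed, since the content is entirely contained in the existence--uniqueness--surjectivity statement of that proposition (and, behind it, in Propositions \ref{pro-polar-inv} and \ref{pro-growing}). If desired, one could record the slightly finer remark that $\#\caD$ is itself at most the number of branches of a topologically generic transverse polar of $\xi$, since each $d \in \caD$ carries $\rho_d \geq 1$ branches of that polar going sharply through $\caK(d)$; this recovers the cruder bound that would follow from Proposition \ref{pro-polar-inv} alone, but it is not needed for the corollary as stated.
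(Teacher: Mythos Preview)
Your argument is correct and is exactly the approach the paper intends: the corollary is stated immediately after Proposition \ref{Prop-p-gamma} with no separate proof, precisely because the surjection $d \mapsto q_d$ from $\caD$ onto $\caR(\xi)$ that you extract is the whole content. Nothing more is needed.
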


From now on, if $d \in \mathcal{D}$ is a dicritical point of $BP(\mathcal{J}(\xi))$, $p_d$ will denote the last free point lying both on $\xi$ and $\mathcal{K}(d)$, and $q_d$ will stand for the rupture point associated to $d$ according to Proposition \ref{Prop-p-gamma}. Note that $q_d$ may be either equal to or satellite of $p_d$. As a particular case, if $O \in \mathcal{D}$, then $q_O = O$ because it is the only point $\preceq O$. However, determining $q_d$ in the case $d \neq O$, which we assume from now on, is not so easy and needs some more work.

The first step to determine $q_d$ is to compute the polar invariant $I_{\xi}(q_d)= I_{\xi}(d) = \frac{[\xi.\mathcal{K}(d)]}{\nu_O(\mathcal{K}(d))}$ from $BP(\mathcal{J}(\xi))$, and we can do it thanks to the following

\begin{lem} \label{Lem-Rec-Pol-Inv}
If $d \in \mathcal{D}$ is a dicritical point of $BP(\mathcal{J}(\xi))$,  then $I_{\xi}(d) = \frac{[BP(\mathcal{J}(\xi)).\mathcal{K}(d)]}{n_d} + 1$.
\end{lem}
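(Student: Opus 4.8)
The plan is to translate the polar invariant $I_{\xi}(d) = [\xi.\caK(d)]/\nu_O(\caK(d))$ into intersection data that is visible on $BP(\caJ(\xi))$ by using the morphism language of Section \ref{sect-prel}. First I would invoke Lemma \ref{alternativa-I(p)}: since $d \in \caD \subset BP(\caJ(\xi))$, we have $\caK(d) \cap \caN_O(\eta) = \{O\}$, so $[\xi.\caK(d)] = v_d(\xi)$ and $\nu_O(\caK(d)) = n_d$, whence $I_{\xi}(d) = v_d(\xi)/n_d$. Thus it suffices to show $v_d(\xi) = [BP(\caJ(\xi)).\caK(d)] + n_d$, i.e. $[\xi.\caK(d)] - [BP(\caJ(\xi)).\caK(d)] = n_d = \nu_O(\caK(d))$.

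The key computation is to expand both intersection numbers with Noether's formula over the points of $\caK(d)$. Writing $\caK(d) = \{O = p_0 < p_1 < \cdots < p_k = d\}$, each point $p_i$ has $\nu_{p_i}(\caK(d)) = 1$, so $[\xi.\caK(d)] = \sum_{i=0}^{k} e_{p_i}(\xi)$ and $[BP(\caJ(\xi)).\caK(d)] = \sum_{i : p_i \in BP(\caJ(\xi))} e_{p_i}(\zeta)$, where I use that a topologically generic polar $\zeta$ goes sharply through $BP(\caJ(\xi))$, so its effective multiplicities at those points are the virtual ones. The bridge between $e_{p_i}(\xi)$ and $e_{p_i}(\zeta)$ is Proposition \ref{pro-mult-jac} (and the fact that $\zeta = \mathbf{J}(\varphi)$): along the free chain $\caK(d)$, the formula $e_{p}(\mathbf{J}(\varphi)) = m_p + n_p - m_{p'} - n_{p'} - 1$ for $p$ free proximate to $p'$ telescopes. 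Summing over $p_1, \ldots, p_k$ I get $\sum_{i=1}^{k} e_{p_i}(\zeta) = (m_d + n_d) - (m_O + n_O) - k = m_d + n_d - e_O(\xi) - 1 - k$, and I must also account for the contribution at $O$, where $e_O(\mathbf{J}(\varphi)) = m_O + n_O - 2 = e_O(\xi) - 1$. On the curve side, by Lemma \ref{calcul-n_p-m_p} and Theorem \ref{Thm-values-versus-alcades}/Corollary \ref{Cor-rupt-valor=alcada}, $v_d(\xi)$ relates to $m_d$; combining, the difference collapses to $n_d$, using $n_{p_i} = 1$ along the free chain (Lemma \ref{calcul-n_p-m_p} gives $n_{p_i} = \sum_{q \to p_i} n_q$, and on the minimal free cluster $\caK(d)$ each free point has a single proximate successor, so all $n_{p_i} = n_O = 1$, forcing $n_d = \nu_O(\caK(d)) = 1$ as well — which also matches $\nu_O(\caK(d)) = 1$ from Definition \ref{df_Kp}).

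The main obstacle I anticipate is bookkeeping the endpoint and the contracted/base-point contributions correctly: the points of $\caK(d)$ strictly between $O$ and $d$ that are \emph{not} base points of $BP(\caJ(\xi))$ contribute $0$ to $[BP(\caJ(\xi)).\caK(d)]$ but might still have $e_{p_i}(\zeta) \neq 0$ unless one argues that a branch of $\zeta$ going sharply through $\caK(d)$ passes through exactly the base points of $BP(\caJ(\xi))$ lying in $\caK(d)$ — so the cleaner route is to replace $[BP(\caJ(\xi)).\caK(d)]$ by $[\zeta.\caK(d)] - (\text{tail beyond } d)$ for a well-chosen branch $\gamma$ of $\zeta$ going sharply through $\caK(d)$, compute $[\zeta.\caK(d)]$ (equivalently $[\gamma.\caK(d)] = v_d(\gamma) = e_O(\gamma)$ since $\gamma$ is smooth-like along $\caK(d)$, giving $e_O(\gamma) = n_d$), and then identify $[\xi.\caK(d)] - [\gamma.\caK(d)]$ with the "extra" intersection $n_d$. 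Getting this split exactly right, and confirming via the projection formula that $[\xi.\caK(d)] = v_d(\xi) = [\varphi_*(\xi).\text{(trunk data)}]$ leads to the clean $+1$ after dividing by $n_d$, is where I expect to spend most of the care; once the telescoping in Proposition \ref{pro-mult-jac} is set up correctly, the rest is routine.
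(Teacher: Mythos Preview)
Your proposal has a genuine gap that makes the telescoping argument break down. You assume that $\caK(d)$ is a chain of free points with all virtual multiplicities equal to $1$ (and hence $n_d = 1$). This is false in general: a dicritical point $d$ of $BP(\caJ(\xi))$ typically lies at the end of a chain that passes through satellite points of $\xi$, so $\nu_O(\caK(d)) = n_d > 1$. Look for instance at Example~\ref{Ex-A}, where $n_{p_{20}} = 9$ and $n_{p_{21}} = 12$. Once $\caK(d)$ contains satellite points, the Noether sums $\sum e_{p_i}(\cdot)$ must be weighted by the non-trivial multiplicities $\nu_{p_i}(\caK(d))$, and the telescoping of Proposition~\ref{pro-mult-jac} no longer collapses cleanly because the satellite formula $e_p(\mathbf{J}(\varphi)) = m_p + n_p - m_{p'} - n_{p'} - m_{p''} - n_{p''}$ has a different shape. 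Your appeal to Corollary~\ref{Cor-rupt-valor=alcada} to get $v_d(\xi) = m_d$ is also unjustified: $d$ is a dicritical point of $BP(\caJ(\xi))$, not a rupture point of $\xi$; in fact $d$ usually does not lie on $\xi$ at all, and Theorem~\ref{Thm-values-versus-alcades} then gives $v_d(\xi) < m_d$.

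The paper's proof is entirely different in spirit and does not use the morphism invariants $m_p, n_p$ at all. It picks a branch $\gamma$ of a generic transverse polar $\zeta$ going sharply through $\caK(d)$, chooses coordinates so that $\zeta : \partial f/\partial y = 0$, and computes $[\xi.\gamma]$ and $[\zeta'.\gamma]$ (for a second generic polar $\zeta' : \partial f/\partial x = 0$) via Puiseux series. The crucial identity is the chain rule
\[
\frac{d}{dx} f(x,s(\epsilon x)) = \frac{\partial f}{\partial x}(x,s(\epsilon x)) + \epsilon\,\frac{\partial f}{\partial y}(x,s(\epsilon x))\, s'(x),
\]
where the second term vanishes identically because $\gamma$ is a branch of $\partial f/\partial y = 0$. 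This immediately gives $o_x(f(x,s(\epsilon x))) = 1 + o_x\bigl(\tfrac{\partial f}{\partial x}(x,s(\epsilon x))\bigr)$ for each conjugate, and summing over the $n = e_O(\gamma)$ conjugates yields $[\xi.\gamma] = e_O(\gamma) + [\zeta'.\gamma] = n_d + [BP(\caJ(\xi)).\caK(d)]$. This is the idea you are missing; your ``cleaner route'' at the end gestures toward comparing $[\xi.\gamma]$ and $[\zeta.\gamma]$, but without the chain-rule trick there is no mechanism producing exactly the extra $e_O(\gamma)$.
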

\begin{proof}
Let $\gamma$ be a branch of a topologically generic transverse polar $\zeta$ of $\xi$ going sharply through $\mathcal{K}(d)$. So, proving the statement is equivalent to prove
$$I_{\xi}(q_d) = \frac{[\xi.\gamma]}{e_O(\gamma)} = \frac{[BP(\mathcal{J}(\xi)).\gamma]}{e_O(\gamma)} + 1.$$
By definition, there exists some equation $f$ of $\xi$ and some smooth germ $g = 0$ such that $\zeta$ is given by the equation $\frac{\partial(f,g)}{\partial(x,y)} = 0$. Up to change of coordinates, we may assume $g = x$, and thus $\zeta : \frac{\partial f}{\partial y} = 0$.

Since $BP(\mathcal{J}(\xi)) = BP\left(\frac{\partial f}{\partial x}, \frac{\partial f}{\partial y}\right)$, all but finitely many germs $\zeta'$ of the pencil
$$\left\{ \alpha \frac{\partial f}{\partial x} + \beta \frac{\partial f}{\partial y} = 0 \right\}$$
go sharply through $BP(\mathcal{J}(\xi))$ and miss the first point lying on $\gamma$ and not in $BP(\mathcal{J}(\xi))$. Then, for any such $\zeta'$, we have $[BP(\mathcal{J}(\xi)).\gamma] = [\zeta'.\gamma]$. Moreover, up to a linear change of the coordinate $y$, we may assume that $\zeta' : \frac{\partial f}{\partial x} = 0$.

Now, let $n = e_O(\gamma)$ and let $s(x)$ be a Puiseux series of $\gamma$. Thus, we have (see \cite[Remark 2.6.6]{Cas00} for this formula of the intersection product)
\[[\xi.\gamma] = \sum_{\epsilon^n = 1} o_x(f(x,s(\epsilon x))) \, \text{ and } \, [\zeta'.\gamma] = \sum_{\epsilon^n = 1} o_x\left(\frac{\partial f}{\partial x}(x,s(\epsilon x))\right).\]
We may relate the summands in the two formulas as follows:
\[o_x(f(x,s(\epsilon x))) = 1 + o_x\left(\frac{d}{dx}f(x,s(\epsilon x))\right) = 1 + o_x\left(\frac{\partial f}{\partial x}(x,s(\epsilon x)) + \epsilon \frac{\partial f}{\partial y}(x,s(\epsilon x)) s'(x) \right)\]
and since $\gamma$ is a branch of $\zeta : \frac{\partial f}{\partial y} = 0$, the summand $\epsilon \frac{\partial f}{\partial y}(x,s(\epsilon x)) s'(x)$  vanishes identically. Now adding-up all these equalities for every $n$-th root of the unity $\epsilon$, we finally obtain
\[[\xi.\gamma] = n + [\zeta'.\gamma] = e_O(\gamma) + [BP(\mathcal{J}(\xi)).\gamma]\]
and the claim follows.
\end{proof}

The second step in order to determine $q_d$ is to determine $p_d$, the last free point preceding or equal to $q_d$, or equivalently, the last free point lying both on $\xi$ and $\mathcal{K}(d)$. To achieve this we will use a property that relates $p_d$ to the polar invariant $I_{\xi}(d)$:

\begin{pro} \label{Prop-p}
Let $d \neq O$ be a dicritical point of $BP(\mathcal{J}(\xi))$, $q_d$ its associated rupture point (see Proposition \ref{Prop-p-gamma}), and $p_d \leq q_d$ the last free point preceding or equal to $q_d$. Let $p'_d<d$ be the last point such that $\frac{m_{p'_d}}{n_{p'_d}} < I_{\xi}(d)$ and whose next point in $\mathcal{K}(d)$ is free. Then $p_d$ is the next point of $p'_d$ in $\mathcal{K}(d)$. In particular, $p_d \in BP\left(\mathcal{J}\left(\xi\right)\right)$.
\end{pro}
\begin{proof}
Suppose $p_d$ is proximate to $p'$. We want to show that $p'=p'_d$ as defined in the statement. Since $q_d \preceq d$, we must have $p_d \leqslant d$, and hence $p' < d$. Moreover, combining Proposition \ref{pro-growing} and Theorem \ref{Thm-values-versus-alcades} we obtain that $v_{p'}(\xi) = m_{p'}$ and
\[I_{\xi}(p') = \frac{m_{p'}}{n_{p'}} < I_{\xi}(q_d).\]
So, among all points strictly preceding $d$ whose next point in $\mathcal{K}(d)$ is free, $p'$ must satisfy $\frac{m_{p'}}{n_{p'}} < I_{\xi}(d)$. We need to show that indeed $p'$ is the last point with such property. Let $O<p_1<p_2<\ldots<p_k$ be the free points in $\mathcal{K}(d)$, and for each $i \geq 1$ let $p_i'$ be the point immediatly preceding $p_i$. Then $p_{i+1}'$ is either equal to or satellite of $p_i$, and hence Proposition \ref{pro-growing} gives
\[I_{\xi}(p_i') \stackrel{(a)}{\leq} I_{\xi}(p_{i+1}') \leq I_{\xi}(p_i) \qquad \textrm{ for all } 1 \leq i < k,\]
where the inequality $(a)$ is strict if and only if  $p_i' \leq p'$, since this is equivalent to $p_i$ lying on $\xi$. In particular, the sequence $\{I_{\xi}(p_i')\}_{i=1}^k$ is strictly increasing up to $p'$, and it becomes constant after that.

Suppose now to get a contradiction that $p' = p_r'$ for some $1 \leq r \leq k$, but that it is not the last $p_i'$ such that $\frac{m_{p_i'}}{n_{p_i'}} < I_{\xi}(d)$, i.e. assume $r < k$ and $\frac{m_{p_s'}}{n_{p_s'}} < I_{\xi}(d)$ for some $r < s \leq k$. This implies that
\[I_{\xi}(p_s') \leq \frac{m_{p_s'}}{n_{p_s'}} < I_{\xi}(d),\]
but since $p_r$ is the last free point lying both on $\xi$ and $\mathcal{K}(d)$, it holds the equality $I_{\xi}(p_s') = I_{\xi}(d)$, which leads to a contradiction and we are done.
\end{proof}

Now that $p_d$ has been determined, it only remains to know which of its satellite points is $q_d$. The problem is that there might be many points $q$, equal to or satellite of $p_d$, with the same invariant quotient $I_{\xi}(q) =I_{\xi}(d)$. Moreover, although %Corollary \ref{cor-growing}
Proposition \ref{pro-growing} implies that $q_d$ is the smallest (by $\prec$) such point, there is no way to determine it explicitly from the last $p_d$-satellite point in $\mathcal{K}(d)$. Fortunately, the $p_d$-satellite points $q$ bigger than $q_d$ and with the same invariant are exactly the points for which $v_q(\xi) < m_q$ (Theorem \ref{Thm-values-versus-alcades}), and this fact enables us to solve this case. In other words, the heights $m_q$ can distinguish between the $p_d$-satellite points when the invariants $I_{\xi}(q)$ cannot. This fact allows us to develop an algorithm which computes $q_d$ just from the polar invariant $I_{\xi}(d)$ and the already determined $p_d$, by seeking the unique point $q$ which is either equal to or satellite of $p_d$ and for which the equality $\frac{m_q}{n_q} = I_{\xi}(q_d) = I_{\xi}(d)$ holds. In fact, it computes step by step all the intermediate points $p_d = q_0<q_1<\cdots<q_{k-1}<q_k = q_d$ (where $q_i$ is in the first neighbourhood of $q_{i-1}$).

The procedure works as follows:
\begin{itemize}
\item Start with $i = 0$ and $q_0 = p_d$.
\item While $\frac{m_{q_i}}{n_{q_i}} \neq I_{\xi}(d)$ do
    \begin{itemize}
    \item If $\frac{m_{q_i}}{n_{q_i}} > I_{\xi}(d)$ take $q_{i+1}$ to be the first satellite of $q_i$.
    \item If $\frac{m_{q_i}}{n_{q_i}} < I_{\xi}(d)$ take $q_{i+1}$ to be the second satellite of $q_i$.
    \end{itemize}
    Increase $i$ to $i+1$.
\item If $\frac{m_{q_i}}{n_{q_i}} = I_{\xi}(d)$, end by taking $k = i$ and $q_d = q_k$.
\end{itemize}

\begin{thm} \label{Thm-Calcul-q_gamma}
Keep the above notations. The above procedure ends after a finite number of steps, and actually computes the rupture point $q_d$.
\end{thm}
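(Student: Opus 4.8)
The plan is to prove two preliminary facts and then run an induction that tracks $q_d$ along the sequence $q_0=p_d,q_1,q_2,\dots$ produced by the procedure. First, $q_d$ itself satisfies the loop's exit condition $m_q/n_q=I_\xi(d)$: since $q_d$ is a rupture point, Corollary \ref{Cor-rupt-valor=alcada} gives $v_{q_d}(\xi)=m_{q_d}$; as $\caK(q_d)\subseteq\caK(d)$ and $q_d$ equals $p_d$ or is a satellite of $p_d\in BP(\caJ(\xi))$, Lemma \ref{alternativa-I(p)} applies and yields $I_\xi(q_d)=v_{q_d}(\xi)/n_{q_d}=m_{q_d}/n_{q_d}$; and $I_\xi(q_d)=I_\xi(d)$ by the definition of $q_d$ in Proposition \ref{Prop-p-gamma}. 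Second, every $q_i$ produced is $p_d$ or a $p_d$-satellite point: the first satellite of the free point $p_d$ is $p_d$-satellite, and by Lemma \ref{lem-Ordre-p-satellites} the first and second satellites of a $p_d$-satellite point are again $p_d$-satellite; hence Lemma \ref{alternativa-I(p)} applies at every $q_i$, so $I_\xi(q_i)=v_{q_i}(\xi)/n_{q_i}$ and the quantities $m_{q_i}/n_{q_i}$ are meaningful throughout.

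The heart of the matter is the following dichotomy. Fix a branch $\beta$ of $\xi$ through $q_d$ whose point in the first neighbourhood of $q_d$ is free (it exists because $q_d$ is a rupture point); then $\beta$ passes through $p_d$ and $q_d\prec\beta$ (that is, $\beta$ is bigger than $q_d$), so $q\prec\beta$ for every $p_d$-satellite $q\preceq q_d$. I claim that for every $p_d$-satellite point $q$ one has $m_q/n_q>I_\xi(d)$ if $q\succ q_d$ and $m_q/n_q<I_\xi(d)$ if $q\prec q_d$, and moreover $m_{p_d}/n_{p_d}\geq I_\xi(d)$ with equality if and only if $q_d=p_d$. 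Indeed, if $q\succ q_d$ then $q_d\ne p_d$, and Proposition \ref{pro-growing}(b) forces $I_\xi(q)>I_\xi(q_d)=I_\xi(d)$ — equality would require that no branch of $\xi$ be bigger than $q_d$, which $\beta$ contradicts — so $m_q/n_q\geq I_\xi(q)>I_\xi(d)$. If $q\prec q_d$ then $q\prec\beta$, so $\beta$ is a branch of $\xi$ through $p_d$ which is not smaller than $q$, hence $v_q(\xi)=m_q$ by Theorem \ref{Thm-values-versus-alcades}; combining this with Proposition \ref{pro-growing}(b), which is now strict because $\beta$ is bigger than $q$, we obtain $m_q/n_q=I_\xi(q)<I_\xi(q_d)=I_\xi(d)$. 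Finally $m_{p_d}/n_{p_d}\geq v_{p_d}(\xi)/n_{p_d}=I_\xi(p_d)\geq I_\xi(q_d)=I_\xi(d)$ by Corollary \ref{cor-growing}, the last inequality being strict when $q_d\ne p_d$ (again because of $\beta$), while the first preliminary fact gives equality when $q_d=p_d$.

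Now I would induct on $i$ with the invariant: $q_d=q_i$, or $q_d$ is a $p_d$-satellite point infinitely near $q_i$. It holds at $q_0=p_d$ because $q_d\in\caR^{p_d}(\xi)$. Suppose it holds at step $i$ and the loop has not stopped, so $m_{q_i}/n_{q_i}\ne I_\xi(d)$; then $q_d\ne q_i$ by the first fact, so $q_d$ is a $p_d$-satellite strictly infinitely near $q_i$, and therefore it is equal to, or infinitely near, exactly one of the satellite points in the first neighbourhood of $q_i$ (one such point if $q_i=p_d$, two if $q_i$ is $p_d$-satellite, since a $p_d$-satellite point below $q_i$ cannot pass through a free point of the first neighbourhood of $q_i$). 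By Lemma \ref{lem-Ordre-p-satellites} this means $q_d\prec q_i$ or $q_d\succ q_i$; in the case $q_i=p_d$ only the former can occur and, since $q_d\ne p_d$, the dichotomy gives $m_{p_d}/n_{p_d}>I_\xi(d)$. If $q_d\prec q_i$, the dichotomy gives $m_{q_i}/n_{q_i}>I_\xi(d)$, so the procedure passes to the first satellite of $q_i$, which is exactly the one having $q_d$ equal to or infinitely near it; symmetrically, if $q_d\succ q_i$ the dichotomy gives $m_{q_i}/n_{q_i}<I_\xi(d)$ and the procedure passes to the second satellite, again the correct one. Thus the invariant is preserved and $q_i<q_{i+1}\leqslant q_d$. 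Since there are finitely many points $\leqslant q_d$, the loop must stop after finitely many steps; and when it stops at some $q_i$, the invariant forces $q_i=q_d$, for otherwise $q_d$ would be strictly infinitely near $q_i$ and the dichotomy (with the sharpened statement at $p_d$ when $q_i=p_d$) would contradict $m_{q_i}/n_{q_i}=I_\xi(d)$. Hence the procedure terminates and returns $q_d$, as claimed.

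The main obstacle is the dichotomy of the second paragraph: it is precisely where the height data $m_q$ supplied by the morphism $\varphi$ must be matched against the invariant-quotient data $I_\xi(q)$, the crucial ingredient being that a single branch of $\xi$ realizing the rupture at $q_d$ is at once bigger than every $p_d$-satellite $q\prec q_d$ (which pins down $v_q(\xi)=m_q$, hence $m_q/n_q=I_\xi(q)<I_\xi(d)$ there) and responsible for a strict jump of $I_\xi$ past $I_\xi(d)$ at every $q\succ q_d$ (so $m_q/n_q\geq I_\xi(q)>I_\xi(d)$). Granting this, the remainder is bookkeeping on the binary tree of $p_d$-satellite points provided by Lemma \ref{lem-Ordre-p-satellites}.
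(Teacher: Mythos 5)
Your overall strategy — establish the exit condition at $q_d$ via Corollary \ref{Cor-rupt-valor=alcada}, then track the algorithm by induction on $i$ using the $\prec$-ordering of $p_d$-satellites from Lemma \ref{lem-Ordre-p-satellites} — is the same as the paper's. However, the dichotomy on which everything rests contains a false step. You assert that the branch $\beta$ of $\xi$ through $q_d$ whose point in the first neighbourhood of $q_d$ is free satisfies $q_d\prec\beta$, i.e.\ that $\beta$ is bigger than $q_d$. This is wrong: by Remark \ref{Rmk-quotients}, the very hypothesis on $\beta$ (irreducible, through $q_d$, with a \emph{free} point in the first neighbourhood of $q_d$) forces $e_{p_d}(\beta)/e_O(\beta) = \nu_{p_d}(\caK(q_d))/\nu_O(\caK(q_d))$, so the defining inequality of $q_d\prec\beta$ degenerates to an equality and $\beta$ is \emph{not} bigger than $q_d$. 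It is bigger than every $p_d$-satellite strictly $\prec q_d$, which is all your argument needs for the $q\prec q_d$ leg, but both the $q\succ q_d$ leg and the $q=p_d$ leg invoke the non-existent strict inequality. The intermediate conclusion you draw, ``$I_\xi(q)>I_\xi(q_d)$ for all $q\succ q_d$'' and ``$I_\xi(p_d)>I_\xi(q_d)$ when $q_d\neq p_d$'', is actually false in general: for $\xi:y^3-x^{11}=0$ (Example \ref{Ex-04}), where $p_d=p_3$ and $q_d=p_5$, one has $I_\xi(p_3)=I_\xi(p_5)=11$, precisely because the unique branch passes through $p_5$ with a free next point and hence is not bigger than $p_5$.

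The dichotomy itself is still a true statement, but the $q\succeq q_d$ cases require a second appeal to Theorem \ref{Thm-values-versus-alcades}, as the paper does. Concretely, when $q\succ q_d$ and no branch of $\xi$ is bigger than $q_d$ (so $I_\xi(q)=I_\xi(q_d)=I_\xi(d)$), the fact that $q_d\prec q$ implies every branch of $\xi$ through $p_d$ is smaller than $q$, whence $v_q(\xi)<m_q$ by Theorem \ref{Thm-values-versus-alcades}, and therefore $m_q/n_q > v_q(\xi)/n_q = I_\xi(q) = I_\xi(d)$; the strict inequality comes from the height exceeding the value, not from the invariant quotient jumping. Analogously for $q=p_d$. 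Once this is patched, the binary-tree bookkeeping in your induction goes through and the argument matches the paper's proof.
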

\begin{proof}
First of all, note that since $q_d$ is a rupture point, Corollary \ref{Cor-rupt-valor=alcada} implies that $I_{\xi}(d) = I_{\xi}(q_d) =\frac{m_{q_d}}{n_{q_d}}$. Therefore, since there are finitely many points between $p_d$ and $q_d$, it is enough to check that each $q_i$ actually precedes $q_d$ and that if $\frac{m_{q_i}}{n_{q_i}} = I_{\xi}(d)$ then $q_i = q_d$.

To see that $q_i \leqslant q_d$ for each $i$ we use induction on $i$.  For $i = 0$, we have $q_0 = p_d$, and hence $q_0 = p_d \leqslant q_d$ by definition of $p_d$. Now suppose we have reached the step $i$ of the algorithm and we have to perform another step. This means that $q_i \leqslant q_d$ and $\frac{m_{q_i}}{n_{q_i}} \neq I_{\xi}(d)$. We know that in this case $q_i < q_d$, and we claim that the point $q_{i+1}$ computed by the algorithm still precedes $q_d$. Indeed, since $p_d \leqslant q_i < q_d$ and $q_d$ is $p_d$-satellite, the point in the first neighbourhood of $q_i$ preceding $q_d$ must be satellite. Hence, it only remains to check that the choice made by the algorithm is the correct one.
\begin{itemize}
\item If $\frac{m_{q_i}}{n_{q_i}} < I_{\xi}(d)$, then $I_{\xi}(q_i) = \frac{v_{q_i}(\xi)}{n_{q_i}} \leqslant \frac{m_{q_i}}{n_{q_i}} < I_{\xi}(d)$ by Theorem \ref{Thm-values-versus-alcades}. Therefore, by Lemma \ref{lem-Ordre-p-satellites} and Proposition \ref{pro-growing}, the next point $q_{i+1}$ must be the second satellite point, for if it was the first one the invariants $I_{\xi}(q)$ would be strictly smaller than $I_{\xi}(d)$ for every satellite $q \geqslant q_{i+1}$.
\item If $\frac{m_{q_i}}{n_{q_i}} > I_{\xi}(d)$, then either $I_{\xi}(d) < I_{\xi}(q_i) \leqslant \frac{m_{q_i}}{n_{q_i}}$ or $I_{\xi}(q_i) \leqslant I_{\xi}(d) < \frac{m_{q_i}}{n_{q_i}}$. In the former case we apply Lemma \ref{lem-Ordre-p-satellites} and Proposition \ref{pro-growing} as above to see that $q_{i+1}$ must be the first satellite point of $q_i$. In the latter case we have that $v_{q_i}(\xi) < m_{q_i}$, and hence by Theorem \ref{Thm-values-versus-alcades} every branch of $\xi$ through $p_d$ is smaller than $q_i$. This implies in particular that $q_d \prec q_i$, and thus by Lemma \ref{lem-Ordre-p-satellites} $q_d$ must be infinitely near to the first satellite of $q_i$.
\end{itemize}
In any case, the algorithm is correct.

In order to complete the proof, we must check that  the algorithm does not stop before reaching the point $q_d$. That is, we have to show that if $q$ is either $p_d$ or any $p_d$-satellite point strictly preceding $q_d$, then $\frac{m_q}{n_q} \neq I_{\xi}(q_d)$.
\begin{itemize}
\item If $q \prec q_d$, any branch of $\xi$ going through $q_d$ is bigger than $q$. Then Proposition \ref{pro-growing} implies that $I_{\xi}(q) < I_{\xi}(d)$, and by Theorem \ref{Thm-values-versus-alcades} we also have that $v_q(\xi) = m_q$. So $I_{\xi}(q) = \frac{m_q}{n_q} < I_{\xi}(d)$ and in particular $\frac{m_q}{n_q} \neq I_{\xi}(d)$.
\item Consider now the case $q \succ q_d$. Then, on the one hand Proposition \ref{pro-growing} implies that  $I_{\xi}(d) \leqslant I_{\xi}(q)$, with equality if and only if every branch of $\xi$ going through $p_{d}$ is not bigger than $q_d$. On the other hand, Theorem \ref{Thm-values-versus-alcades} says that $v_q(\xi) \leqslant m_q$, and equality holds if and only if there is some branch of $\xi$ not smaller than $q$. Summarizing, we have $I_{\xi}(d) \leqslant I_{\xi}(q) \leqslant \frac{m_q}{n_q}$, and having equality $I_{\xi}(d) = \frac{m_q}{n_q}$ would imply (by Theorem \ref{Thm-values-versus-alcades}) that there is some branch of $\xi$ through $p_d$ which is not smaller than $q$. But such a branch would be bigger than $q_d$, implying (by Proposition \ref{pro-growing}) that $I_{\xi}(d) < I_{\xi}(q) \leqslant \frac{m_q}{n_q}$ and thus contradicting the equality $I_{\xi}(d) = \frac{m_q}{n_q}$.
\end{itemize}
\end{proof}

\subsection{Recovering values} \label{ssect_rec_val}

This section is devoted to explain how the values of a curve $\xi$ at its singular points can be recovered from the invariants $m_p$ and $n_p$, provided the set of rupture points $\mathcal{R}(\xi)$ (and hence the set of singular points $\mathcal{S}(\xi)$) is already known.
%(for example, if it has been determined as explained in the previous section).
Recall that from Lemma \ref{alternativa-I(p)} we already know that $v_p(\xi) = n_p I_{\xi}(p)$ at any $p \in \mathcal{S}(\xi)$, but that the difficulty lies on the computation of the invariant quotient $I_{\xi}(p)$.

%We will consider the different cases ordered by increasing difficulty: we will start with the rupture points (which are the easiest ones), followed by the free singular points, and we will finish by considering the satellite points which are not rupture points (the most complicated case).

Assume first that $p \in \mathcal{R}\left(\xi\right)$ is a rupture point. Then Corollary \ref{Cor-rupt-valor=alcada} implies that $v_p(\xi) = m_p$.

Suppose now that $p \in \mathcal{S}(\xi)$ a free singular point which is not a rupture point. By Theorem \ref{Thm-values-versus-alcades}, we have the equality $v_p(\xi) = m_p$ if and only if there is a free point in the first neighbourhood of $p$ lying on $\xi$. In particular, if there is a free singular point in the first neighbourhood of $p$, we can also assert that $v_p(\xi) = m_p$. If otherwise there is no free singular point on $\xi$ in the first neighbourhood of $p$, then there is at most one free point lying on $\xi$ in the first neighbourhood of $p$ and, if it exists, it is non-singular. If there is no such a point, then %Corollary \ref{cor-growing}
Proposition \ref{pro-growing} implies that
\[v_p(\xi) = n_pI_{\xi}(p) = n_pI_{\xi}(q) = \frac{n_p}{n_q}v_q(\xi) = \frac{n_p}{n_q}m_q,\]
where $q$ is the biggest $p$-satellite point in $\mathcal{R}(\xi)$. On the contrary, if $\xi$ has a free point in the first neighbourhood of $p$, then Proposition \ref{pro-I-satellite-3}, Lemma \ref{alternativa-I(p)} and Corollary \ref{Cor-rupt-valor=alcada} give the inequalities
\[\frac{v_p(\xi)-1}{n_p} < \frac{v_q(\xi)}{n_q} = \frac{m_q}{n_q} < \frac{v_p(\xi)}{n_p},\]
which are equivalent to
\[\frac{n_p}{n_q}m_q < v_p(\xi) < \frac{n_p}{n_q}m_q+1,\]
where as before $q$ is the biggest $p$-satellite point in $\mathcal{R}(\xi)$. Hence, in any case, $v_p(\xi)$ belongs to the real interval $\left[\frac{n_p}{n_q}m_q,\frac{n_p}{n_q}m_q+1\right)$. Since the width of this interval is one, there is exactly one integer in it, and thus the value $v_p(\xi)$ is uniquely determined.

So far we have proved the following

\begin{pro} \label{Pro-value-free} Let $p \in \mathcal{S}(\xi)$ be a free singular point which is not a rupture point.
\begin{itemize}
\item If there is a free singular point in the first neighbourhood of $p$, then $v_p(\xi) = m_p$.
\item Otherwise, let $q$ be the biggest point in $\mathcal{R}^p(\xi)$ (which must be non-empty). Then $v_p(\xi)$ is the only integer in the interval \[\left[\frac{n_p}{n_q}m_q,\frac{n_p}{n_q}m_q+1\right).\] Moreover, the equality $v_p(\xi) = \frac{n_p}{n_q}m_q$ holds if and only if there is no branch of $\xi$ going through $p$ and whose point in the first neighbourhood of $p$ is free.
\end{itemize}
\end{pro}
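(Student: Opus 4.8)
The statement only collects the case analysis sketched in the paragraphs preceding it, so the plan is to assemble those observations into one clean argument, while making explicit two implicit points: that $m_p$ and $n_p$ are defined at $p$, and that $\caR^p(\xi)\ne\emptyset$ in the second item. For the first point, a free singular point of $\xi$ lies in $BP(\caJ(\xi))$, and as $BP(\caJ(\xi))$ is a cluster meeting $\caN_O(\eta)$ only at $O$, we get $\caK(p)\subseteq BP(\caJ(\xi))$ and hence $\caK(p)\cap\caN_O(\eta)=\{O\}$, so Theorem \ref{Thm-values-versus-alcades}, Lemma \ref{alternativa-I(p)}, Corollary \ref{cor-growing}, Corollary \ref{Cor-rupt-valor=alcada} and Proposition \ref{pro-I-satellite-3} all apply at $p$. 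If moreover a free singular point of $\xi$ lies in the first neighbourhood of $p$, then there is in particular a free point of $\xi$ there, and the ``free'' case of Theorem \ref{Thm-values-versus-alcades} gives $v_p(\xi)=m_p$; this settles the first item.

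For the second item, assume no free singular point of $\xi$ lies in the first neighbourhood of $p$. First I would check $\caR^p(\xi)\ne\emptyset$: since $p$ is free and singular, there is a satellite point of $\xi$ above $p$; the point $r_1$ of $\xi$ in the first neighbourhood of $p$ on the chain down to it lies on $\xi$, and were $r_1$ free it would be a free singular point of $\xi$ in the first neighbourhood of $p$, contrary to assumption, so $r_1$ is the first satellite of $p$ and $\xi$ has $p$-satellite points; a short inspection of the three possibilities for the point of $\xi$ in the first neighbourhood of the last such point then shows that that point is a rupture point. Let $q$ be the largest point of $\caR^p(\xi)$; since $p$ is not a rupture point, $q$ is $p$-satellite. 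By Lemma \ref{alternativa-I(p)}, $v_p(\xi)=n_p\,I_\xi(p)$, and by Lemma \ref{alternativa-I(p)} together with Corollary \ref{Cor-rupt-valor=alcada}, $I_\xi(q)=v_q(\xi)/n_q=m_q/n_q$.

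Now split into two cases. If there is no free point of $\xi$ in the first neighbourhood of $p$, then the last part of Corollary \ref{cor-growing} gives $I_\xi(q)=I_\xi(p)$, so $v_p(\xi)=(n_p/n_q)v_q(\xi)=(n_p/n_q)m_q$. If there is a free point of $\xi$ in the first neighbourhood of $p$, it is unique and non-singular, the unique branch of $\xi$ through it is the only branch of $\xi$ through $p$ with a free point in its first neighbourhood, and $e_p(\xi)\ge 2$ (both this free point and the first satellite of $p$ are proximate to $p$ and lie on $\xi$), so there is a further branch of $\xi$ through $p$; hence the hypotheses of Proposition \ref{pro-I-satellite-3} are met for $p$ and $q$, and that proposition, together with Lemma \ref{alternativa-I(p)} and Corollary \ref{Cor-rupt-valor=alcada}, gives $(n_p/n_q)m_q<v_p(\xi)<(n_p/n_q)m_q+1$. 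In either case $v_p(\xi)$ lies in $\left[\frac{n_p}{n_q}m_q,\frac{n_p}{n_q}m_q+1\right)$, a half-open interval of length one, which contains a unique integer; hence $v_p(\xi)$ is that integer. Finally, by the case split, $v_p(\xi)=(n_p/n_q)m_q$ holds exactly in the first case, that is, exactly when no branch of $\xi$ through $p$ has a free point in its first neighbourhood.

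All the analytic substance lies in Sections \ref{sect-tracking} and \ref{sect-morph}; the points that will require care are purely combinatorial, namely verifying $\caR^p(\xi)\ne\emptyset$, that the $q$ produced above is indeed the point to which the last part of Corollary \ref{cor-growing} applies in the first case, and that the branch configuration of $\xi$ through $p$ in the second case is exactly the one demanded by Proposition \ref{pro-I-satellite-3}. Granting these, the proof closes with the trivial observation that a half-open interval of length one contains exactly one integer.
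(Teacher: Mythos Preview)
Your proposal is correct and follows essentially the same route as the paper: the first item is Theorem \ref{Thm-values-versus-alcades} applied to a free $p$, and the second item is the dichotomy ``no free point on $\xi$ in the first neighbourhood of $p$'' (handled via Corollary \ref{cor-growing}) versus ``exactly one, and it is non-singular'' (handled via Proposition \ref{pro-I-satellite-3}), both combined with Lemma \ref{alternativa-I(p)} and Corollary \ref{Cor-rupt-valor=alcada}. You add a couple of checks the paper leaves implicit (that $\caK(p)\cap\caN_O(\eta)=\{O\}$, that $\caR^p(\xi)\neq\emptyset$, and that the branch configuration matches the hypotheses of Proposition \ref{pro-I-satellite-3}); one small point to tighten is that your sentence ``since $p$ is free and singular, there is a satellite point of $\xi$ above $p$'' is not literally a consequence of those two hypotheses alone (a free multiple rupture point can have only free points of $\xi$ above it), so the non-rupture assumption and the absence of a free singular point in the first neighbourhood should enter before, not after, that claim.
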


It only remains to consider the case of satellite points $p \in \mathcal{S}(\xi)$ which are not rupture points, and it is solved by the next

\begin{pro} \label{Pro-value-satellite}
Let $p \in \mathcal{S}(\xi)$ be a satellite point of $\xi$ which is not a rupture point. Suppose moreover that $p$ is satellite of $p' \in \mathcal{S}(\xi)$ and let $q$ be the biggest point in $\mathcal{R}^{p'}(\xi)$. Then
\[v_p(\xi) =
\begin{cases}
\frac{n_p}{n_{p'}}v_{p'}(\xi) & \mbox{ if $p \succ q$ and $v_{p'}(\xi) = \frac{n_{p'}}{n_q}m_q$,} \\
m_p & \mbox{ otherwise.} \\
\end{cases}\]
\end{pro}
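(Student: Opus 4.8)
The heart of the matter is the equivalence, valid for our $p$ (a satellite singular point of $\xi$, not a rupture point, satellite of $p'$),
$$v_p(\xi)<m_p \Longleftrightarrow I_\xi(p)=I_\xi(p'),$$
together with the observation that in this case $v_p(\xi)=\tfrac{n_p}{n_{p'}}v_{p'}(\xi)$. To obtain the equivalence I would start from Theorem \ref{Thm-values-versus-alcades}: it gives $v_p(\xi)\le m_p$, with $v_p(\xi)<m_p$ exactly when every branch of $\xi$ through $p'$ is smaller than $p$. The key remark is that, since $p$ is \emph{not} a rupture point, no branch $\gamma$ of $\xi$ through $p'$ can satisfy $\tfrac{e_{p'}(\gamma)}{e_O(\gamma)}=\tfrac{\nu_{p'}(\caK(p))}{\nu_O(\caK(p))}$: by Remark \ref{Rmk-quotients} the last $p'$-satellite point of such a $\gamma$ would have the same quotient as $p$, hence would equal $p$ (the quotient being injective on $p'$-satellite points, as follows from the valuative description of $\prec$ or from Lemma \ref{lem-Ordre-p-satellites}), so $\gamma$ would pass through $p$ with a free point in its first neighbourhood and $p$ would be a rupture point. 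Thus ``every branch of $\xi$ through $p'$ is smaller than $p$'' coincides with ``no branch of $\xi$ is bigger than $p$'', which by Corollary \ref{cor-growing} amounts to $I_\xi(p)=I_\xi(p')$; and in that case, applying Lemma \ref{alternativa-I(p)} to both $p$ and $p'$ (the latter being a free singular point, hence in $BP(\caJ(\xi))$, and $p$ being satellite of it), one gets $v_p(\xi)=n_pI_\xi(p)=n_pI_\xi(p')=\tfrac{n_p}{n_{p'}}v_{p'}(\xi)$.

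It then suffices to recognize the condition $I_\xi(p)=I_\xi(p')$ in terms of $q$. First, $\caR^{p'}(\xi)\ne\emptyset$ — the last $p'$-satellite point of any branch of $\xi$ through $p$ is a rupture point — so $q$ is well defined, and because $q$ is a rupture point, Corollary \ref{Cor-rupt-valor=alcada} and Lemma \ref{alternativa-I(p)} give $m_q=v_q(\xi)=n_qI_\xi(q)$, so $v_{p'}(\xi)=\tfrac{n_{p'}}{n_q}m_q$ is the same as $I_\xi(q)=I_\xi(p')$. For the direction $(\Leftarrow)$: if $p\succ q$ and $I_\xi(q)=I_\xi(p')$, then $q\prec p\prec p'$ (the last inequality by Lemma \ref{lem-Ordre-p-satellites}), and Proposition \ref{pro-growing} together with Corollary \ref{cor-growing} sandwich $I_\xi(q)\le I_\xi(p)\le I_\xi(p')$, forcing $I_\xi(p)=I_\xi(p')$. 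For $(\Rightarrow)$, assume $I_\xi(p)=I_\xi(p')$. I would first prove $q\prec p$ (that is, $p\succ q$): if instead $q\succ p$ (note $q\ne p$, as $q$ is a rupture point and $p$ is not), Proposition \ref{pro-growing} forces $I_\xi(p)=I_\xi(q)=I_\xi(p')$, hence by its equality clause no branch of $\xi$ is bigger than $p$; but a branch of $\xi$ witnessing that $q$ is a rupture point has $q$ as its last $p'$-satellite point (or, if $q=p'$, becomes free in the first neighbourhood of $p'$), so it is bigger than $p$ — a contradiction. Finally $I_\xi(q)=I_\xi(p')$: otherwise Corollary \ref{cor-growing} produces a branch $\gamma$ of $\xi$ bigger than $q$; since no branch through $p'$ can become free in the first neighbourhood of $p'$ (such a branch would be bigger than $p$, against $I_\xi(p)=I_\xi(p')$), $\gamma$ has a last $p'$-satellite point $r_\gamma$, which is then a rupture point strictly bigger than $q$, contradicting the maximality of $q$ in $\caR^{p'}(\xi)$.

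Putting the two steps together, $v_p(\xi)<m_p$ holds exactly when $p\succ q$ and $v_{p'}(\xi)=\tfrac{n_{p'}}{n_q}m_q$, in which case $v_p(\xi)=\tfrac{n_p}{n_{p'}}v_{p'}(\xi)$; in every other case $v_p(\xi)=m_p$ by Theorem \ref{Thm-values-versus-alcades}, as claimed. I expect the delicate point to be precisely the borderline analysis of the equality $\tfrac{e_{p'}(\gamma)}{e_O(\gamma)}=\tfrac{\nu_{p'}(\caK(p))}{\nu_O(\caK(p))}$ — verifying that $p$ failing to be a rupture point is exactly what prevents a branch of $\xi$ through $p'$ from having the same quotient at $p'$ as $\caK(p)$ — and, on a more routine level, keeping straight which free point plays the role of $p'$ in each invocation of Proposition \ref{pro-growing}, Corollary \ref{cor-growing} and Lemma \ref{lem-Ordre-p-satellites}.
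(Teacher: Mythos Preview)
Your proof is correct, but it is organized differently from the paper's. The paper proceeds by a direct case analysis: first it separates the case where there is a branch of $\xi$ through $p'$ with a free point in its first neighbourhood (namely when $p'=q$ is itself a rupture point, or when $p'\neq q$ and $v_{p'}(\xi)\neq\frac{n_{p'}}{n_q}m_q$, the latter by quoting Proposition~\ref{Pro-value-free}); in that case Theorem~\ref{Thm-values-versus-alcades} gives $v_p(\xi)=m_p$ at once. In the remaining case it splits on $p\prec q$ versus $p\succ q$, again citing Theorem~\ref{Thm-values-versus-alcades} and Corollary~\ref{cor-growing}. So the paper's argument is short precisely because it reuses Proposition~\ref{Pro-value-free} as a black box.

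Your route is instead a two--step reduction: first the equivalence $v_p(\xi)<m_p\Longleftrightarrow I_\xi(p)=I_\xi(p')$, established by exploiting that $p$ is not a rupture point to rule out the borderline equality $\tfrac{e_{p'}(\gamma)}{e_O(\gamma)}=\tfrac{\nu_{p'}(\caK(p))}{\nu_O(\caK(p))}$; then the translation of $I_\xi(p)=I_\xi(p')$ into the stated condition on $q$ and $v_{p'}(\xi)$. This is more self-contained (you never invoke Proposition~\ref{Pro-value-free}) and makes the role of the non-rupture hypothesis more transparent, at the cost of being longer and of carrying out explicitly the maximality argument for $q$ that the paper absorbs into the quotation of Corollary~\ref{cor-growing}. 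The ``delicate point'' you flag is indeed the only subtle step, and you handle it correctly: the injectivity of $q\mapsto \frac{\nu_{p'}(\caK(q))}{\nu_O(\caK(q))}$ on $\{p'\}\cup\{p'\text{-satellites}\}$ (from the continued-fraction description behind Lemma~\ref{lem-Ordre-p-satellites}) forces any branch with that quotient to have $p$ as its last $p'$-satellite point, hence to witness $p$ as a rupture point.
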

\begin{proof}
If $p' = q$ is a rupture point, there exists a branch of $\xi$ going through $p'$ and having a free point in its first neighbourhood, and the same holds if otherwise $p' \neq q$ but $v_{p'}(\xi) \neq \frac{n_{p'}}{n_q}m_q$ (by Proposition \ref{Pro-value-free}). Thus, in any case Theorem \ref{Thm-values-versus-alcades} implies that $v_p(\xi) = m_p$.

Suppose now that $p'$ is not a rupture point and $v_{p'}(\xi) = \frac{n_{p'}}{n_q}m_q$. Then there is no branch of $\xi$ going through $p'$ and having a free point in its first neighbourhood. If furthermore $p \prec q$, Theorem \ref{Thm-values-versus-alcades} applies to give $v_p(\xi) = m_p$ again, but if otherwise $p \succ q$, %Corollary \ref{cor-growing}
Proposition \ref{pro-growing} gives that
\[v_p(\xi) = n_pI_{\xi}(p) = n_pI_{\xi}(p') = \frac{n_p}{n_{p'}}v_{p'}(\xi).\]
\end{proof}

As a consequence of the proof of Proposition \ref{Pro-value-satellite} we infer the following result, which determines those free points $p \in \mathcal{S}(\xi)$ (besides the rupture points) admitting branches of $\xi$ going through $p$ and non-singular after $p$.

%which determines...for which free points $p \in \mathcal{S}(\xi)$ (besides the rupture points) there exists some branch of $\xi$ which goes through $p$ and is non-singular after it.

\begin{cor}
Let $p \in \mathcal{S}(\xi)$ be a free singular point. Then there is some branch of $\xi$ non-smaller than $p$ if and only if either $p$ is a rupture point or $v_p(\xi) \neq \frac{n_p}{n_q} m_q$ (where $q$ is the biggest $p$-satellite rupture point of $\xi$).
\end{cor}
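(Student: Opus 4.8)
The corollary is, as announced, extracted from the proof of Proposition~\ref{Pro-value-satellite}: applied with the free singular point $p$ in the role of ``$p'$'' there, that argument already shows --- with $q$ the biggest point of $\caR^{p}(\xi)$ --- that \emph{$p$ is a rupture point or $v_p(\xi)\neq\frac{n_p}{n_q}m_q$} holds if and only if \emph{some branch of $\xi$ passes through $p$ with a free point in its first neighbourhood}: the forward implication is the content of the first two sentences of that proof (via Proposition~\ref{Pro-value-free}), and the contrapositive of the reverse implication is its third paragraph. So the plan reduces to matching the latter condition with the left-hand side of the present statement, i.e.\ to proving that for a free singular point $p\in\caS(\xi)$ there is a branch of $\xi$ non-smaller than $p$ if and only if some branch of $\xi$ passes through $p$ with a free point in its first neighbourhood.

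I would prove this equivalence directly from the definition of the relation $\prec$ between points and irreducible germs. If a branch $\gamma$ of $\xi$ passes through $p$ and the point of $\gamma$ in the first neighbourhood of $p$ is free, then $\gamma$ is one of the admissible curves $\gamma^p$ of Remark~\ref{Rmk-quotients}(1), so $e_p(\gamma)/e_O(\gamma)=\nu_p(\caK(p))/\nu_O(\caK(p))$ and $\gamma$ is not smaller than $p$. Conversely, a branch missing $p$ has $e_p(\gamma)=0$ and is smaller than $p$; and if $\gamma$ passes through $p$ but the point of $\gamma$ in the first neighbourhood of $p$ is satellite, then $\gamma$ runs through at least one $p$-satellite point, and taking $q'$ to be the last $p$-satellite point lying on $\gamma$ (whose own point in the first neighbourhood is free) Remark~\ref{Rmk-quotients}(1) yields $e_p(\gamma)/e_O(\gamma)=\nu_p(\caK(q'))/\nu_O(\caK(q'))<\nu_p(\caK(p))/\nu_O(\caK(p))$, the strict inequality being the quotient content of $q'\prec p$ (Lemma~\ref{lem-Ordre-p-satellites}, together with the strict monotonicity of the quotients $\nu_p(\caK(\cdot))/\nu_O(\caK(\cdot))$ along $\prec$); so $\gamma$ is again smaller than $p$. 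Since $p\in\caS(\xi)$ lies on $\xi$, at least one branch passes through $p$, and the equivalence follows.

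Combining the two equivalences proves the corollary whenever $\caR^{p}(\xi)\neq\emptyset$; and if $p$ is a rupture point both sides hold trivially, since by definition of a rupture point there is then a branch of $\xi$ through $p$ with a free point in its first neighbourhood. The one configuration left over is $p$ not a rupture point with $\caR^{p}(\xi)=\emptyset$; here the unique point of $\xi$ in the first neighbourhood of $p$ cannot be satellite --- if it were, it would be a $p$-satellite point of $\xi$, and following the $p$-satellite chain inside $\xi$ to its deepest point one would obtain a $p$-satellite rupture point --- so that point is free, the left-hand side of the corollary holds, and the right-hand side is to be understood as satisfied, there being no $p$-satellite rupture point $q$ to exhibit a failure. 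I expect the only genuinely delicate step to be the strict inequality in the second paragraph, for a branch of $\xi$ that enters a satellite point immediately after $p$; everything else is bookkeeping on top of Propositions~\ref{Pro-value-free} and~\ref{Pro-value-satellite} and Remark~\ref{Rmk-quotients}.
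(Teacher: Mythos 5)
Your proposal is correct and follows the paper's (implicit) approach: the paper presents this corollary as an immediate consequence of the proof of Proposition~\ref{Pro-value-satellite}, and you correctly extract from that proof the equivalence of the right-hand condition with the existence of a branch of $\xi$ through $p$ whose point in the first neighbourhood of $p$ is free, and then supply the remaining verification that this is precisely what ``some branch of $\xi$ is non-smaller than $p$'' means. The strict inequality you flag as the delicate point is indeed correct --- for any $p$-satellite $q' \neq p$, the cluster $\caK(q')$ contains both $p$ and the first satellite of $p$ among the points proximate to $p'$, so $\nu_{p'}(\caK(q')) > \nu_p(\caK(q'))$, and by Remark~\ref{Rmk-quotients}(2) this gives $\frac{\nu_p(\caK(q'))}{\nu_O(\caK(q'))} < \frac{\nu_p(\caK(p))}{\nu_O(\caK(p))}$ --- and your reading of the degenerate case $\caR^p(\xi) = \emptyset$ is the sensible one.
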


\subsection{The algorithm}

\label{sect_alg}

%In this section, we present the algorithm which computes the cluster $\mathcal{S}(\xi)$ and the values $v_p(\xi)$ for $p \in \mathcal{S}(\xi)$ from the weighted cluster $BP(\mathcal{J}(\xi))$ of base points of the polar germs of $\xi$.

%\vspace{0.6cm}

%{\bf The algorithm:}

\begin{alg} \label{alg-1}
Starting from the weighted cluster $BP\left(\mathcal{J}\left(\xi\right)\right)$, the following algorithm computes the sets $\mathcal{R} = \mathcal{R}(\xi)$ and $\mathcal{S} = \mathcal{S}(\xi)$ of rupture and singular points of $\xi$, together with the values $v_p = v_p(\xi)$ for any $p \in \mathcal{S}(\xi)$.

\bigskip

\noindent Part 1: {\em Recovering the rupture points and the singular points.}
\begin{enumerate}
    \item Start with $\mathcal{R} = \mathcal{S} = \emptyset$, and let $\mathcal{D}$ be the set of dicritical points of $BP(\mathcal{J}(\xi))$.
    \item If $O \in \mathcal{D}$, then set $\mathcal{R} = \mathcal{S} = \{O\}$.
    \item For each $d \in \mathcal{D} - \{O\}$:
    \begin{enumerate}
        \item Compute $I = \frac{[BP(\mathcal{J}(\xi)).\mathcal{K}(d)]}{n_{d}} + 1$.
        \item Find the last point $p' < d$ such that $\frac{m_{p'}}{n_{p'}} < I$ and its next point $p$ in $\mathcal{K}(d)$ is free.
        \item Take $i = 0$ and $q_0 = p$.
        \item While $\frac{m_{q_i}}{n_{q_i}} \neq I$ do
        \begin{itemize}
            \item If $\frac{m_{q_i}}{n_{q_i}} > I$, take $q_{i+1}$ to be the first  satellite of $q_i$.
            \item If $\frac{m_{q_i}}{n_{q_i}} < I$, take $q_{i+1}$ to be the second satellite of $q_i$.
        \end{itemize}
        Increase $i$ to $i+1$.
        \item If $\frac{m_{q_i}}{n_{q_i}} = I$, set $\mathcal{R} = \mathcal{R} \cup \{q_i\}$ and $\mathcal{S} = \mathcal{S} \cup \{q \, | \, q \leqslant q_i\}$.
    \end{enumerate}
\end{enumerate}

\noindent Part 2: {\em Recovering the values.}
    \begin{enumerate}
    \item For each $p \in \mathcal{R}$ set $v_p = m_p$.
    \item For each free point $p \in \mathcal{S}-\mathcal{R}$
        \begin{itemize}
        \item If there is a free point both in $\mathcal{S}$ and in the first neighbourhood of $p$, set $v_p = m_p$.
        \item Otherwise, let $q$ be the biggest $p$-satellite point in $\mathcal{R}$ and set $v_p$ the only integer in the interval $\left[\frac{n_p}{n_q}m_q,\frac{n_p}{n_q}m_q+1\right)$.
        \end{itemize}
    \item For each satellite point $p \in \mathcal{S}-\mathcal{R}$, let $p'$ be the free point of which $p$ is satellite, and let $q$ be the biggest point in $\mathcal{R}$ which is either equal to or satellite of $p'$.

        \begin{itemize}
        \item If $p \succ q$ and $v_{p'} = \frac{n_{p'}}{n_q}m_q$ both hold, set $v_p = \frac{n_p}{n_{p'}}v_{p'}$.
        \item Otherwise, set $v_p = m_p$.
        \end{itemize}
    \end{enumerate}
\end{alg}

%From the previous results in this section, it is immediate to check the following

%\begin{thm} \label{Thm-Algorithm}
%The previous algorithm actually computes the set of rupture points and the cluster of singular points of $\xi$, together with the values of $\xi$ at these points. More precisely, at the end of the algorithm we have $\mathcal{R}(\xi) = \mathcal{R}$, $\mathcal{S}(\xi) = \mathcal{S}$ and $v_p(\xi) = v_p$ for any $p \in \mathcal{S}(\xi)$.
%\end{thm}

\begin{rmk} \label{cor-demo-altern}
This algorithm gives a proof of the first statement in Theorem \ref{Casas-8.6.4}. Furthermore, it is obvious that the algorithm yields similar clusters if it is applied to similar clusters, so in fact it also proves the second statement in Theorem \ref{Casas-8.6.4}, as we wanted.
\end{rmk}

%By combining the algorithm given in this paper with the one in \cite{Alb2004-CommAlg}, which computes $BP(\mathcal{J}(\xi)) = BP\left(\frac{\partial f}{\partial x},\frac{\partial f}{\partial y}\right)$ from the partial derivatives of any equation of $\xi$ we obtain the procedure which has been followed when dealing with the examples (see Section \ref{sect_examp}):

\begin{cor} \label{Cor-4.11}
The cluster of singular points $\mathcal{S}(\xi)$ of any reduced singular curve $\xi : f=0$ is determined and may be explicitly computed from any two polars $P_{g_1}(f)$ and $P_{g_2}(f)$, provided $g_1$ and $g_2$ have different tangents, regardless whether they are topologically generic or even transverse ones.
\end{cor}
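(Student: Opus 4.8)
The plan is to reduce the corollary to the algorithm of Section~\ref{sect_alg} (Theorem~\ref{Thm-Algorithm}), whose sole input is the weighted cluster $BP(\caJ(\xi))$: every quantity handled by that algorithm is computed from this cluster alone. The polar invariants $I_\xi(d)$ are obtained from it through Lemma~\ref{Lem-Rec-Pol-Inv}; the multiplicities $n_p$ and the heights $m_p$, for $p$ a base point of $\caJ(\xi)$ or a satellite of one of them, are obtained through the recursions of Lemma~\ref{calcul-n_p-m_p}, which are fed only by the virtual multiplicities $\nu_p\bigl(BP(\caJ(\xi))\bigr)=e_p(\zeta)$ of a topologically generic transverse polar $\zeta$ (recall $\zeta$ goes sharply through $BP(\caJ(\xi))$), together with the proximity relations of $BP(\caJ(\xi))$ and the standard combinatorics of satellite points. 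That these recursions are applicable rests on Lemma~\ref{Lem-BP(morphism)}, which tells that the points in question are non-fundamental for the morphism $\varphi=(f,g)$. So it suffices to produce the weighted cluster $BP(\caJ(\xi))$ from $P_{g_1}(f)$ and $P_{g_2}(f)$.

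To do that I would start from the elementary identity $P_{g_i}(f)=\frac{\partial g_i}{\partial y}\frac{\partial f}{\partial x}-\frac{\partial g_i}{\partial x}\frac{\partial f}{\partial y}=:a_i\frac{\partial f}{\partial x}+b_i\frac{\partial f}{\partial y}$, with $a_i,b_i\in\caO$. Since each $g_i$ defines a smooth germ $\eta_i$ and $\eta_1,\eta_2$ have different tangents, the matrix $\left(\begin{smallmatrix}a_1&b_1\\ a_2&b_2\end{smallmatrix}\right)$ has value at $O$ encoding the initial linear forms of $g_1$ and $g_2$, hence nonzero determinant, hence is invertible over $\caO$. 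Consequently the ideals $\bigl(P_{g_1}(f),P_{g_2}(f)\bigr)$ and $\bigl(\frac{\partial f}{\partial x},\frac{\partial f}{\partial y}\bigr)$ coincide, so the pencil spanned by the two given polars and the pencil spanned by the partial derivatives have the same fixed part (none, $\xi$ being reduced) and, by the very definition of base points of a linear family, the same cluster of base points. Combined with $BP\bigl(\frac{\partial f}{\partial x},\frac{\partial f}{\partial y}\bigr)=BP(\caJ(\xi))$ (cf.\ \cite[Corollary 8.5.7]{Cas00}, \cite{Tei77a}), this yields
$$BP\bigl(\langle P_{g_1}(f),P_{g_2}(f)\rangle\bigr)=BP(\caJ(\xi)),$$
a cluster effectively computable from the two polar equations, e.g.\ by the pencil algorithm of \cite{Alb2004-CommAlg}. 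Note that this ideal identity lets one sidestep the more delicate route of arguing directly that the generic member of the pencil spanned by $P_{g_1}(f)$ and $P_{g_2}(f)$ is a topologically generic polar.

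Assembling the two steps: from $P_{g_1}(f)$ and $P_{g_2}(f)$ one computes $BP(\caJ(\xi))$ as above and then runs the algorithm of Section~\ref{sect_alg}, which by Theorem~\ref{Thm-Algorithm} returns $\caS(\xi)$ together with the values $v_p(\xi)$; by Remark~\ref{cor-demo-altern} the output depends only on the similarity class of $BP(\caJ(\xi))$. It should be stressed that neither transversality nor topological genericity of the individual polars $P_{g_1}(f)$, $P_{g_2}(f)$ plays any role: all that is used is that their equations form an $\caO$-unit combination of $\frac{\partial f}{\partial x}$ and $\frac{\partial f}{\partial y}$, which is exactly what the different-tangents hypothesis provides. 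The step requiring most care — the ``main obstacle'', mild as it is for a corollary — is the verification underlying the first paragraph: that each object the algorithm manipulates ($I_\xi(d)$, the $n_p$, the $m_p$, the recognition of first and second satellites) is genuinely a function of the weighted cluster $BP(\caJ(\xi))$ and is insensitive to the auxiliary smooth germ $g$ used to present a topologically generic polar as the jacobian germ of $\varphi=(f,g)$; this is precisely what Lemmas~\ref{Lem-BP(morphism)} and~\ref{calcul-n_p-m_p} furnish.
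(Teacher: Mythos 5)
Your proof is correct and fleshes out exactly the step the paper leaves implicit: since the pair $(P_{g_1}(f),P_{g_2}(f))$ is obtained from $(\partial f/\partial x,\partial f/\partial y)$ by a $2\times 2$ matrix over $\caO$ whose determinant at $O$ is (up to sign) $\frac{\partial g_1}{\partial x}\frac{\partial g_2}{\partial y}-\frac{\partial g_1}{\partial y}\frac{\partial g_2}{\partial x}$, a unit precisely because $g_1,g_2$ have distinct tangents, the two pencils generate the same ideal and therefore have the same cluster of base points $BP(\caJ(\xi))$, after which Theorem~\ref{Thm-Algorithm} applies verbatim. This is the same route the paper takes (run the pencil algorithm of \cite{Alb2004-CommAlg}, then the algorithm of Section~\ref{sect_alg}); the paper merely asserts the corollary after a one-line remark, so your explicit verification of the $\caO$-unimodular change of generators is a welcome addition rather than a deviation.
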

\begin{proof}
Note that $BP(\mathcal{J}(\xi)) = BP\left(\frac{\partial f}{\partial x},\frac{\partial f}{\partial y}\right) = BP\left(P_{g_1}(f),P_{g_2}(f)\right)$ for any two polars along different directions. This weighted cluster can be explicitly computed using the algorithm in \cite{Alb2004-CommAlg} valid for any pencil of curves. Then use Algorithm \ref{alg-1}.
\end{proof}

In some cases, the rupture point $q_d$ can be directly characterized from $p_d$ as the following Proposition shows.

%Given $d \in \mathcal{D}$ the algorithm yields its associated rupture point $q_d$, once the free point $p_d$ to which $q_d$ is satellite is already found. However, there are some cases for which this task is quite straightforward: if there are two or more rupture points equal to or satellite of the same free point, then all but for the biggest rupture point (which is easily recognized as that having the greatest invariant quotient) are geometrically characterized as follows (recall that by Corollary \ref{cor-I-rupt}, all the polar invariants associated to points in $\mathcal{R}^{p_d}(\xi)$ are different).

\begin{pro} \label{Pro-q_gamma-facil}
Let $d \in \mathcal{D}$ be a dicritical point of $BP(\mathcal{J}(\xi))$ with polar invariant $I= I_{\xi}(d)$, and suppose $p_d$ is the last free point lying both on $\xi$ and $\mathcal{K}(d)$. Assume that there exists another dicritical point $d' \in \mathcal{D}$ for which $p_{d'} = p_d$ but whose polar invariant $I'= I_{\xi}(d')$ is greater than $I$. Then $q_d$ is the last $p_d$-satellite point in $\mathcal{K}(d)$.
\end{pro}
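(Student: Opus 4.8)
The plan is to exploit the two dicritical points $d$ and $d'$ sharing the same underlying free point $p_d = p_{d'}$ but with $I = I_\xi(d) < I' = I_\xi(d')$, and to read off $q_d$ directly from the Enriques diagram of $\caK(d)$ without running the height-based algorithm of Theorem \ref{Thm-Calcul-q_gamma}. First I would fix branches $\gamma$ and $\gamma'$ of a topologically generic transverse polar $\zeta$ going sharply through $\caK(d)$ and $\caK(d')$ respectively; by Proposition \ref{Prop-p-gamma} both $\gamma$ and $\gamma'$ have $p_d$ as their last free point lying on $\xi$, and $q_d \preceq d$, $q_{d'} \preceq d'$ are the associated rupture points, with $I_\xi(q_d) = I < I' = I_\xi(q_{d'})$. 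By Corollary \ref{cor-growing} (together with Corollary \ref{cor-I-rupt}, which guarantees all polar invariants in $\caR^{p_d}(\xi)$ are distinct), the $p_d$-satellite rupture points on $\xi$ are linearly ordered by $\prec$ in the same way as their invariant quotients, so $q_d \prec q_{d'}$.

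Next I would identify the last $p_d$-satellite point $\tilde q$ of $\caK(d)$ and argue that $\tilde q = q_d$. The key point is that $\gamma$ goes sharply through $\caK(d)$, so $\tilde q$ is precisely the point for which $I_\xi(\tilde q) = [\gamma.\xi]/e_O(\gamma) = I$ (as observed at the end of the proof of Proposition \ref{Prop-p-gamma}, $[\gamma.\xi]/e_O(\gamma)$ equals $I_\xi(q')$ where $q'$ is the last $p_d$-satellite point on $\gamma$, i.e. $\tilde q$). By Remark \ref{Rmk-quotients} this invariant quotient depends only on the Enriques diagram of $\caK(d)$, hence is computable. Since $q_d$ is, by the last sentence of the proof of Proposition \ref{Prop-p-gamma}, the smallest $p_d$-satellite point with invariant quotient equal to $I$, and we have just exhibited $\tilde q$ as a point with $I_\xi(\tilde q) = I$ lying in $\caK(d)$, it follows that $q_d \preceq \tilde q$. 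For the reverse, I would use the existence of $d'$: because $q_{d'} \succ q_d$ is a rupture point with $I_\xi(q_{d'}) = I' > I$, by Corollary \ref{cor-growing} there is a branch of $\xi$ bigger than $q_d$ (namely the branch of $\xi$ realizing $q_{d'}$), so equality in part $(b)$ of Proposition \ref{pro-growing} or Corollary \ref{cor-growing} fails at $q_d$; one then checks, via Lemma \ref{lem-Ordre-p-satellites}, that any $p_d$-satellite point strictly bigger than $q_d$ already has invariant quotient strictly greater than $I$ if it is a rupture point, but $\tilde q$ need not be a rupture point, so the subtlety is the non-rupture $p_d$-satellites between $q_d$ and $\tilde q$.

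That subtlety is exactly where Theorem \ref{Thm-values-versus-alcades} enters, and I expect it to be the main obstacle: I need to rule out that $q_d$ is some $p_d$-satellite point strictly smaller than the last one $\tilde q$ in $\caK(d)$. The resolution is that if $q_d \prec \tilde q$, then every $p_d$-satellite point $q$ with $q_d \prec q \preceq \tilde q$ satisfies $I_\xi(q) = I$ as well (since $I_\xi$ is constant from $q_d$ up to $p_d$ once no branch of $\xi$ is bigger than $q_d$ — but wait, there \emph{is} such a branch, coming from $q_{d'}$). This forces a case analysis: either there is a branch of $\xi$ bigger than $q_d$ lying between $q_d$ and $\tilde q$ in the satellite chain, contradicting that $q_{d'}$ (which witnesses the bigger branch) is $\succ q_d$ and associated to $d' \neq d$; or $\tilde q$ itself must coincide with $q_d$. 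I would make this precise by comparing the continued-fraction data of $\caK(d)$ near $p_d$ (via Lemma \ref{lem-Ordre-p-satellites}) with the location of $q_{d'}$, using that $d$ and $d'$, being distinct dicritical points with the same $p_d$, have their $p_d$-satellite chains diverging at some satellite point which must lie at or before $q_d$. Once the chains diverge, no point of $\caK(d)$ strictly beyond the divergence can carry a branch of $\xi$ bigger than $q_d$, pinning $q_d$ to the last $p_d$-satellite point of $\caK(d)$. The bookkeeping of where the two satellite chains split, and checking it forces $\tilde q = q_d$ rather than $q_d$ being earlier, is the delicate part; everything else is a direct application of Proposition \ref{pro-growing}, Corollary \ref{cor-growing} and Lemma \ref{lem-Ordre-p-satellites}.
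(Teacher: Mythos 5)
You have assembled all the right ingredients, but you never close the argument, and the detour you propose instead is both unnecessary and not correctly reasoned. You correctly observe that (i) the last $p_d$-satellite point $\tilde q$ of $\caK(d)$ satisfies $I_\xi(\tilde q)=I$; (ii) $q_d\prec q_{d'}$ because $I<I'$ and the invariant quotients at $p_d$-satellite rupture points are distinct (Corollary \ref{cor-I-rupt}); and (iii) since $q_{d'}\succ q_d$ is a rupture point, some branch of $\xi$ through $q_{d'}$ (having a free point in its first neighbourhood) is bigger than $q_d$. At that moment the proof is essentially over: assuming for contradiction $q_d\prec\tilde q$, part $(b)$ of Proposition \ref{pro-growing} applied to $q_1=q_d$, $q_2=\tilde q$ gives strict inequality $I_\xi(q_d)<I_\xi(\tilde q)=I$ precisely because such a bigger branch exists; but $I_\xi(q_d)=I$ by definition of $q_d$, a contradiction. (This is what the paper's proof does, up to a minor reordering: it first shows $q_{d'}\succ q_d$ using $I'>I$, then invokes the rupture branch through $q_{d'}$.)

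Instead of this one-line finish, you introduce a spurious ``case analysis'' and then propose to track where the $p_d$-satellite chains of $\caK(d)$ and $\caK(d')$ diverge and to invoke Theorem \ref{Thm-values-versus-alcades}. Neither is needed, and the chain-divergence argument as stated does not hold together: the branch of $\xi$ bigger than $q_d$ realized by $q_{d'}$ need not pass through any point of $\caK(d)$ beyond where the chains split, so the phrase ``a branch of $\xi$ bigger than $q_d$ lying between $q_d$ and $\tilde q$ in the satellite chain'' is not a well-formed alternative — ``bigger than $q_d$'' is a property relative to the $\prec$-order, not a location in $\caK(d)$. Moreover you explicitly flag that the ``bookkeeping of where the two satellite chains split'' is the delicate part you have not carried out, which is exactly the gap. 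The fix is simply to delete that whole detour and apply Proposition \ref{pro-growing}$(b)$ at the point where you write ``but wait, there \emph{is} such a branch.''
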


\begin{proof}
Suppose the claim is false and let $\bar{q}_d$ be the last $p_d$-satellite point in $\mathcal{K}(d)$. Proposition \ref{Prop-p-gamma} implies that $\bar{q}_d \succeq q_d$, and hence $\bar{q}_d \succ q_d$. Moreover, since $p_d$ is the last free point lying both on $\xi$ and $\mathcal{K}(d)$, we can take indistinctly  $\gamma^d$ or $\gamma^{\bar{q}_d}$ to compute
\[I_{\xi}(\bar{q}_d) = \frac{[\xi.\gamma^{\bar{q}_d}]}{e_O(\gamma^{\bar{q}_d})} = \frac{[\xi.\gamma^d]}{e_O(\gamma^d)} = I.\]

If $q_{d'}$ is the rupture point associated to $d'$, we claim that $q_{d'} \succ q_d$. Indeed, if it is not the case, Proposition \ref{pro-growing} would imply that $I' = I_{\xi}(q_{d'}) \leqslant I_{\xi}(\bar{q}_d) = I$ contradicting our hypothesis. Therefore, there exists some branch of $\xi$ bigger than $q_d$, and then Proposition \ref{pro-growing} again will give $I_{\xi}(q_d) < I_{\xi}(\bar{q}_d) = I$, which contradicts that $q_d$ is the rupture point associated to $d$.
\end{proof}

Based on Proposition \ref{Pro-q_gamma-facil}, we present an alternative version of the algorithm for the part of recovering the rupture and the singular points. This apparently longer version gives a more precise and geometrical description of some of the rupture points $q_d$, for which also avoids the tedious task of performing the iterations in step (d).

\begin{alg} \label{alg-2} Part 1 of Algorithm \ref{alg-1} may be replaced by the following:
\begin{enumerate}
    \item Start with $\mathcal{R} = \mathcal{S} = \emptyset$, and let $\mathcal{D}$ be the set of dicritical points of $BP(\mathcal{J}(\xi))$.
    \item If $O \in \mathcal{D}$, then set $\mathcal{R} = \mathcal{S} = \{O\}$
    \item For each $d \in \mathcal{D} - \{O\}$ compute $I_d = \frac{[BP(\mathcal{J}(\xi)).\mathcal{K}(d)]}{n_d}+1$, and order $\mathcal{D}-\{O\} = \{d_1,\ldots,d_k\}$ by descending order of $I_d$ (i.e., $I_{d_1} \geqslant \ldots \geqslant I_{d_k})$.
    \item For each $j = 1, \ldots , k$ do:
    \begin{enumerate}
        \item Find the last point $p' < d_j$ such that $\frac{m_{p'}}{n_{p'}} < I_{d_j}$ and its next point $p$ in $\mathcal{K}(d_j)$ is free.
        \item If $p$ has already appeared at this step, let $q_j$ be the last $p$-satellite point in $\mathcal{K}(d_j)$ and set $\mathcal{R} = \mathcal{R} \cup \{q_j\}$ and $\mathcal{S} = \mathcal{S} \cup \{q \, | \, q \leqslant q_j\}$. Then skip to the next $j$.
        \item Otherwise, take $i = 0$ and $q_0 = p$.
        \item While $\frac{m_{q_i}}{n_{q_i}} \neq I_{d_j}$ do
            \begin{itemize}
            \item If $\frac{m_{q_i}}{n_{q_i}} > I_{d_j}$, take $q_{i+1}$ to be the first satellite of $q_i$.
            \item If $\frac{m_{q_i}}{n_{q_i}} < I_{d_j}$, take $q_{i+1}$ to be the second satellite of $q_i$.
            \end{itemize}
            Increase $i$ to $i+1$.
        \item If $\frac{m_{q_i}}{n_{q_i}} = I_{d_j}$, set $\mathcal{R} = \mathcal{R} \cup \{q_i\}$ and $\mathcal{S} = \mathcal{S} \cup \{q \, | \, q \leqslant q_i\}$.
        \end{enumerate}
    \end{enumerate}
\end{alg}

\subsection{Examples} \label{sect_examp}

Let us illustrate through some examples the application of Algorithm \ref{alg-1}.
%for the computation of the weighted cluster of singular points $\mathcal{S}(\xi)$ of a singular reduced germ of curve $\xi$ from the weighted cluster $BP(\mathcal{J}(\xi))$ of base points of the jacobian system of $\xi$.
We work each example as follows: we start from an equation $f$ of $\xi$ and then we present our initial data, the weighted cluster of base points \mbox{$BP(\mathcal{J}(\xi)) = BP\left(\frac{\partial f}{\partial x},\frac{\partial f}{\partial y}\right)$}, which has been computed using the algorithm given in \cite{Alb2004-CommAlg} (this part will not be explained in any case). Then we apply Algorithm \ref{alg-1} to $BP(\mathcal{J}(\xi))$ in order to recover the cluster $\mathcal{S}(\xi)$ with the corresponding values, showing the invariants $\frac{m_p}{n_p}$ computed and explaining how the algorithm works. At the end, it can be checked that our output coincides with $\mathcal{S}(\xi)$.

For each example of singular curve $\xi $, four Enriques diagrams will be shown: the first one shows the equisingularity class of the original curve $\xi$. The second one contains the names of the singular points of $\xi$ and the base points of $\mathcal{J}(\xi)$, where the dots in each square mean that there are as many free points as the number in the same square. The third diagram represents the cluster $BP(\mathcal{J}(\xi))$ with its virtual multiplicities, and the fourth one shows the heights of the trunks $m_p$ and the multiplicities $n_p$ of the morphism $\varphi _p$ for each $p \in \mathcal{S}(\xi) \cup BP(\mathcal{J}(\xi))$ (which are computed using Lemma \ref{calcul-n_p-m_p}). The points lying on $\xi$ are represented with black filled circles, while the circles representing points not lying on $\xi$ are filled in white. When reading each example, it is advisable to look at the corresponding figure in order to fix some notation, paying attention to the labels of the points of the clusters.

We start with a pair of simple examples, which are classical in the  literature about polars and were given by Pham \cite{Pha71} in order to prove that the equisingularity class of a curve does not determine the equisingularity class of its topologically generic polars. Namely, the curve $\xi $ of Example \ref{Ex-04} and that of Example \ref{Ex-05} are equisingular, while its topologically generic polars are not. Observe that nor are similar their respective clusters $BP(\mathcal{J}(\xi))$, proving also that the reciprocal of Theorem \ref{Casas-8.6.4} does not hold.

\begin{exam} [See Figure \ref{fig-ex-04}] \label{Ex-04}
Take $\xi$ to be given by $y^3-x^{11} +\alpha x^8y = 0$, with $\alpha \neq 0$. It is irreducible and has only one characteristic exponent: $\frac{11}{3}$. The cluster $BP(\mathcal{J}(\xi))$ is shown in Figure \ref{fig-ex-04}, and hence topologically generic polars of $\xi$ consist of two smooth branches sharing the points on $\xi$ up to $p_3$, the point on $\xi$ in the third neighbourhood of $O$. Moreover, topologically generic polars of $\xi$ share four further fixed free points after $p_3$, two on each branch.

\begin{figure}%[here]
\begin{center}
\psfrag{A}{$\xi, (e_p(\xi), v_p(\xi))$} \psfrag{A0}{\footnotesize $(3,3)$} \psfrag{A1}{\footnotesize $(3,6)$} \psfrag{A2}{\footnotesize $(3,9)$} \psfrag{A3}{\footnotesize $(2,11)$} \psfrag{A4}{\footnotesize $(1,21)$} \psfrag{A5}{\footnotesize $(1,33)$}

\psfrag{B}{$\mathcal{S}(\xi) \cup BP(\mathcal{J}(\xi))$} \psfrag{B0}{\footnotesize $O$} \psfrag{B1}{\footnotesize $p_1$} \psfrag{B2}{\footnotesize $p_2$} \psfrag{B3}{\footnotesize $p_3$} \psfrag{B4}{\footnotesize $p_4$} \psfrag{B5}{\footnotesize $p_5$} \psfrag{B6}{\footnotesize $p_6$} \psfrag{B7}{\footnotesize $p_7$} \psfrag{B8}{\footnotesize $p_8$} \psfrag{B9}{\footnotesize $p_9$}

\psfrag{C}{$BP(\mathcal{J}(\xi)), \nu_p$} \psfrag{C0}{\footnotesize $2$} \psfrag{C1}{\footnotesize $2$} \psfrag{C2}{\footnotesize $2$} \psfrag{C3}{\footnotesize $2$} \psfrag{C6}{\footnotesize $1$} \psfrag{C7}{\footnotesize $1$} \psfrag{C8}{\footnotesize $1$} \psfrag{C9}{\footnotesize $1$}

\psfrag{D}{$\mathcal{S}(\xi) \cup BP(\mathcal{J}(\xi)), \frac{m_p}{n_p}$} \psfrag{D0}{\footnotesize $\frac{3}{1}$} \psfrag{D1}{\footnotesize $\frac{6}{1}$} \psfrag{D2}{\footnotesize $\frac{9}{1}$} \psfrag{D3}{\footnotesize $\frac{12}{1}$} \psfrag{D4}{\footnotesize $\frac{21}{2}$} \psfrag{D5}{\footnotesize $\frac{33}{3}$} \psfrag{D6}{\footnotesize $\frac{14}{1}$} \psfrag{D7}{\footnotesize $\frac{14}{1}$} \psfrag{D8}{\footnotesize $\frac{16}{1}$} \psfrag{D9}{\footnotesize $\frac{16}{1}$}
\includegraphics[scale=0.15]{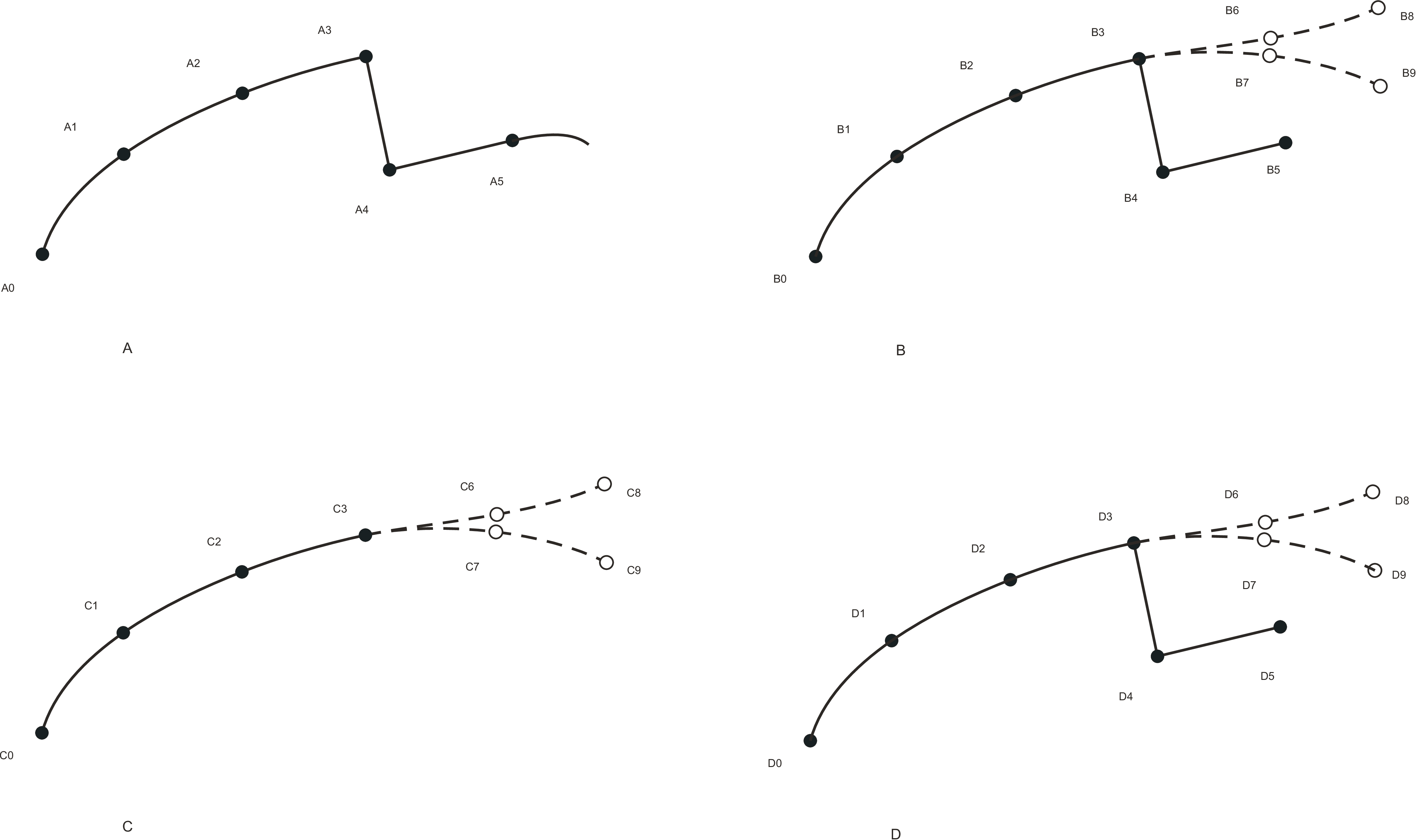}
\end{center}
\caption{\label{fig-ex-04} Enriques diagrams for the singular curve $\xi : y^3-x^{11}+\alpha x^8y = 0$ ($\alpha \neq 0$).}
\end{figure}

Since $O \not \in \mathcal{D} = \{p_8,p_9\}$, we start with $\mathcal{R} = \mathcal{S} = \emptyset$.  The polar invariants are
\[I = I_{p_8} = I_{p_9} = \frac{[BP(\mathcal{J}(\xi)).\mathcal{K}(p_8)]}{n_{p_8}}+1 = \frac{2\cdot1+2\cdot1+2\cdot1+2\cdot1+1^2+1^2}{1}+1 = 11.\]
We start with $p_8$. The corresponding point $p'$ is $p_2$, and thus the rupture point associated to $p_8$ is satellite of $q_0 = p_3$. Step 4(d) consists of the next two iterations:
\begin{itemize}
\item $\frac{m_{q_0}}{n_{q_0}} = 12 > 11 = I$, so we take $q_1 = p_4$, the first satellite of $p_3$.
\item $\frac{m_{q_1}}{n_{q_1}} = \frac{21}{2} < 11 = I$, so we take $q_2 = p_5$, the second satellite of $p_4$.
\end{itemize}
Since $\frac{m_{q_2}}{n_{q_2}} = 11 = I$, we end by taking $\mathcal{R} = \{p_5\}$ and $\mathcal{S} = \{O,p_1,\ldots,p_5\}$.

Taking $p_9$ we have $I_{p_9} = I = 11$ and again $p' = p_2$. Hence we obtain the same  results as for $p_8$ and it is not necessary to add any further point to $\mathcal{R}$ or $\mathcal{S}$.

The second part of the algorithm starts setting  $v_{p_5} = m_{p_5} = 33$. On the one hand, since there are free singular points in the first neighbourhood of $O, p_1$ and $p_2$, Step 2 yields $v_O = 3, v_{p_1} = 6$ and $v_{p_2} = 9$. On the other hand, since there are no free singular points in the first neighbourhood of $p_3$, the second instance of step 2 gives $v_{p_3} = 11$, the only integer in the interval
\[\left[\frac{n_{p_3}}{n_{p_5}}m_{p_5},\frac{n_{p_3}}{n_{p_5}}m_{p_5}+1\right) = [11,12).\]
Finally, the third step of the second part applies to recover $v_{p_4}$. Here $p'$ is $p_3$ and $q$ is $p_5$. Since $p_4 \prec p_5$, we must follow the second instance of step 3 and set $v_{p_4} = m_{p_4} = 21$.
\end{exam}

\begin{exam} [See Figure \ref{fig-ex-05}] \label{Ex-05}
Now consider the curve $\xi$ given by $y^3-x^{11} = 0$. It is again irreducible with single characteristic exponent $\frac{11}{3}$, and hence it is equisingular to the curve in the previous example (in fact, it corresponds to take $\alpha = 0$ in the equation of Example \ref{Ex-04}). However, the Enriques diagram of $BP(\mathcal{J}(\xi))$ is not equal to that in Example \ref{Ex-04}. In this case, topologically generic polars also consist of two smooth branches, but they share five points (instead of four, as happened in the previous example) and there are no more base points.

\begin{figure}%[here]
\begin{center}
\psfrag{A}{$\xi, (e_p(\xi), v_p(\xi))$} \psfrag{A0}{\footnotesize $(3,3)$} \psfrag{A1}{\footnotesize $(3,6)$} \psfrag{A2}{\footnotesize $(3,9)$} \psfrag{A3}{\footnotesize $(2,11)$} \psfrag{A4}{\footnotesize $(1,21)$} \psfrag{A5}{\footnotesize $(1,33)$}

\psfrag{B}{$\mathcal{S}(\xi) \cup BP(\mathcal{J}(\xi))$} \psfrag{B0}{\footnotesize $O$} \psfrag{B1}{\footnotesize $p_1$} \psfrag{B2}{\footnotesize $p_2$} \psfrag{B3}{\footnotesize $p_3$} \psfrag{B4}{\footnotesize $p_4$} \psfrag{B5}{\footnotesize $p_5$} \psfrag{B6}{\footnotesize $p_6$}

\psfrag{C}{$BP(\mathcal{J}(\xi)), \nu_p$} \psfrag{C0}{\footnotesize $2$} \psfrag{C1}{\footnotesize $2$} \psfrag{C2}{\footnotesize $2$} \psfrag{C3}{\footnotesize $2$} \psfrag{C6}{\footnotesize $2$}

\psfrag{D}{$\mathcal{S}(\xi) \cup BP(\mathcal{J}(\xi)), \frac{m_p}{n_p}$} \psfrag{D0}{\footnotesize $\frac{3}{1}$} \psfrag{D1}{\footnotesize $\frac{6}{1}$} \psfrag{D2}{\footnotesize $\frac{9}{1}$} \psfrag{D3}{\footnotesize $\frac{12}{1}$} \psfrag{D4}{\footnotesize $\frac{21}{2}$} \psfrag{D5}{\footnotesize $\frac{33}{3}$} \psfrag{D6}{\footnotesize $\frac{15}{1}$}
\includegraphics[scale=0.15]{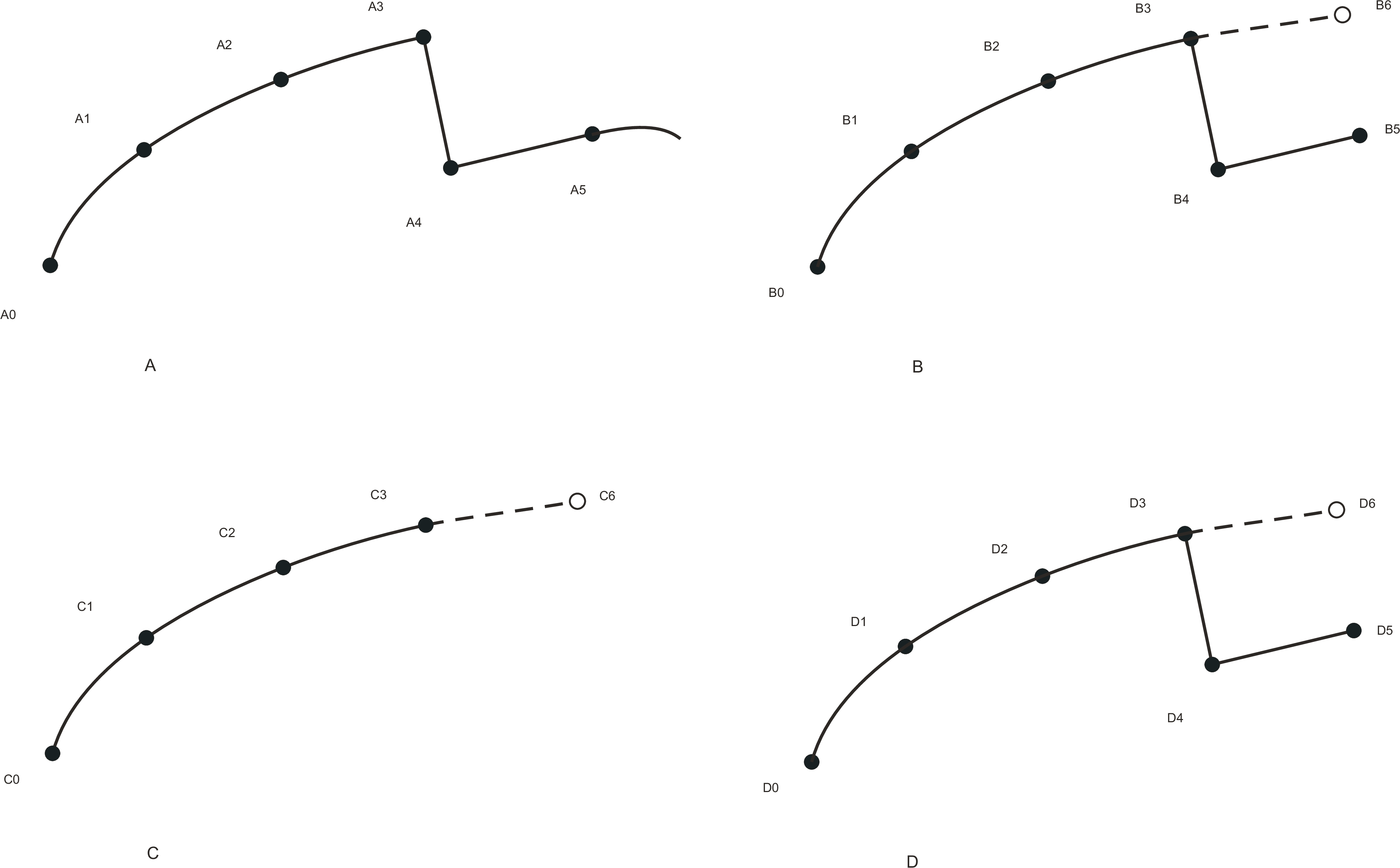}
\end{center}
\caption{\label{fig-ex-05} Enriques diagrams for the singular curve $\xi : y^3-x^{11} = 0$.}
\end{figure}

In this case there is only one dicritical point in $BP(\mathcal{J}(\xi))$: $p_6$, and its corresponding polar invariant is again
\[I = I_{p_6} = \frac{[BP(\mathcal{J}(\xi)).\mathcal{K}(p_6)]}{n_{p_6}}+1 = \frac{2\cdot1+2\cdot1+2\cdot1+2\cdot1+2\cdot1}{1}+1 = 11.\]
Moreover, the point $p$ is again $p_3$, and hence the algorithm works as it does in example 4 (recovering both the rupture points and the values).
\end{exam}

We expose now a more complicated example, since two of the branches of the curve have two characteristic exponents. After this example it will be clear that the computation of $\mathcal{S}(\xi)$ by hand is much faster using Algorithm \ref{alg-2}.

\begin{exam} (See figures \ref{fig-ex-C-A}, \ref{fig-ex-C-B}, \ref{fig-ex-C-C} and \ref{fig-ex-C-D})

\label{Ex-C}

Let $\xi$ be the curve with branches $\gamma_1, \ldots, \gamma_5$ given by the Puiseux series $s_1(x) = x^{\frac{11}{4}} + x^{\frac{51}{16}}, s_2(x) = x^{\frac{11}{4}} + x^{\frac{63}{20}}, s_3(x) = x^{\frac{8}{3}}, s_4(x) = x^{\frac{16}{7}}$ and $s_5(x) = x^{\frac{9}{4}}$. One possible equation for $\xi$ is
\begin{align*}
f = & (y^3 - x^8) (y^4 - x^9) (y^7 - x^{16}) \\
& (y^{16} - 4 x^{11} y^{12} - 80 x^{21} y^9 + 6 x^{22} y^8 - 72 x^{31} y^6 - \\
& \qquad 160 x^{32} y^5 - 4 x^{33} y^4 - 16 x^{41} y^3 + 56 x^{42} y^2 - 16 x^{43} y + x^{44} - x^{51}) \\
& (y^{20} - 5 x^{11} y^{16} + 10 x^{22} y^{12} - 140 x^{24} y^{12} - 10 x^{33} y^8 - 620 x^{35} y^8 - 110 x^{37} y^8 + \\
& \qquad 5 x^{44} y^4 - 260 x^{46} y^4 + 340 x^{48} y^4 - 20 x^{50} y^4 - x^{55} - 4 x^{57} - 6 x^{59} - 4 x^{61} - x^{63}).
\end{align*}
and its Enriques' diagram is shown in Figure \ref{fig-ex-C-A}. It is immediate that the set of rupture points of $\xi$ is $\mathcal{R}(\xi) = \{ p_4, p_5, p_7, p_8, p_{13}, p_{14} \}$.

%%%%%%%%%%%%%%%%%%%%%%% Aquí va la 1a Figura
\begin{figure}
\begin{center}
\psfrag{A}{$21$} \psfrag{B}{$43$} \psfrag{C}{$5$} \psfrag{D}{$9$} \psfrag{E}{$5$}

\psfrag{00}{\footnotesize$O$} \psfrag{01}{\footnotesize$p_1$} \psfrag{02}{\footnotesize$p_2$} \psfrag{03}{\footnotesize$p_3$} \psfrag{04}{\footnotesize$p_4$} \psfrag{05}{\footnotesize$p_5$} \psfrag{06}{\footnotesize$p_6$} \psfrag{07}{\footnotesize$p_7$} \psfrag{08}{\footnotesize$p_8$} \psfrag{09}{\footnotesize$p_9$} \psfrag{10}{\footnotesize$p_{10}$} \psfrag{\footnotesize12}{$p_{12}$}  \psfrag{13}{\footnotesize$p_{13}$} \psfrag{14}{\footnotesize$p_{14}$} \psfrag{15}{\footnotesize$p_{15}$} \psfrag{16}{\footnotesize$p_{16}$} \psfrag{17}{\footnotesize$p_{17}$} \psfrag{18}{\footnotesize$p_{18}$} \psfrag{19}{\footnotesize$p_{19}$} \psfrag{20}{\footnotesize$p_{20}$} \psfrag{21}{\footnotesize$p_{21}$}  \psfrag{22}{\footnotesize$p_{22}$} \psfrag{23}{\footnotesize$p_{23}$} \psfrag{24}{\footnotesize$p_{24}$} \psfrag{25}{\footnotesize$p_{25}$} \psfrag{26}{\footnotesize$p_{26}$} \psfrag{27}{\footnotesize$p_{27}$} \psfrag{28}{\footnotesize$p_{28}$} \psfrag{29}{\footnotesize$p_{29}$} \psfrag{30}{\footnotesize$p_{30}$} \psfrag{31}{\footnotesize$p_{11}$}
\includegraphics[scale=0.22]{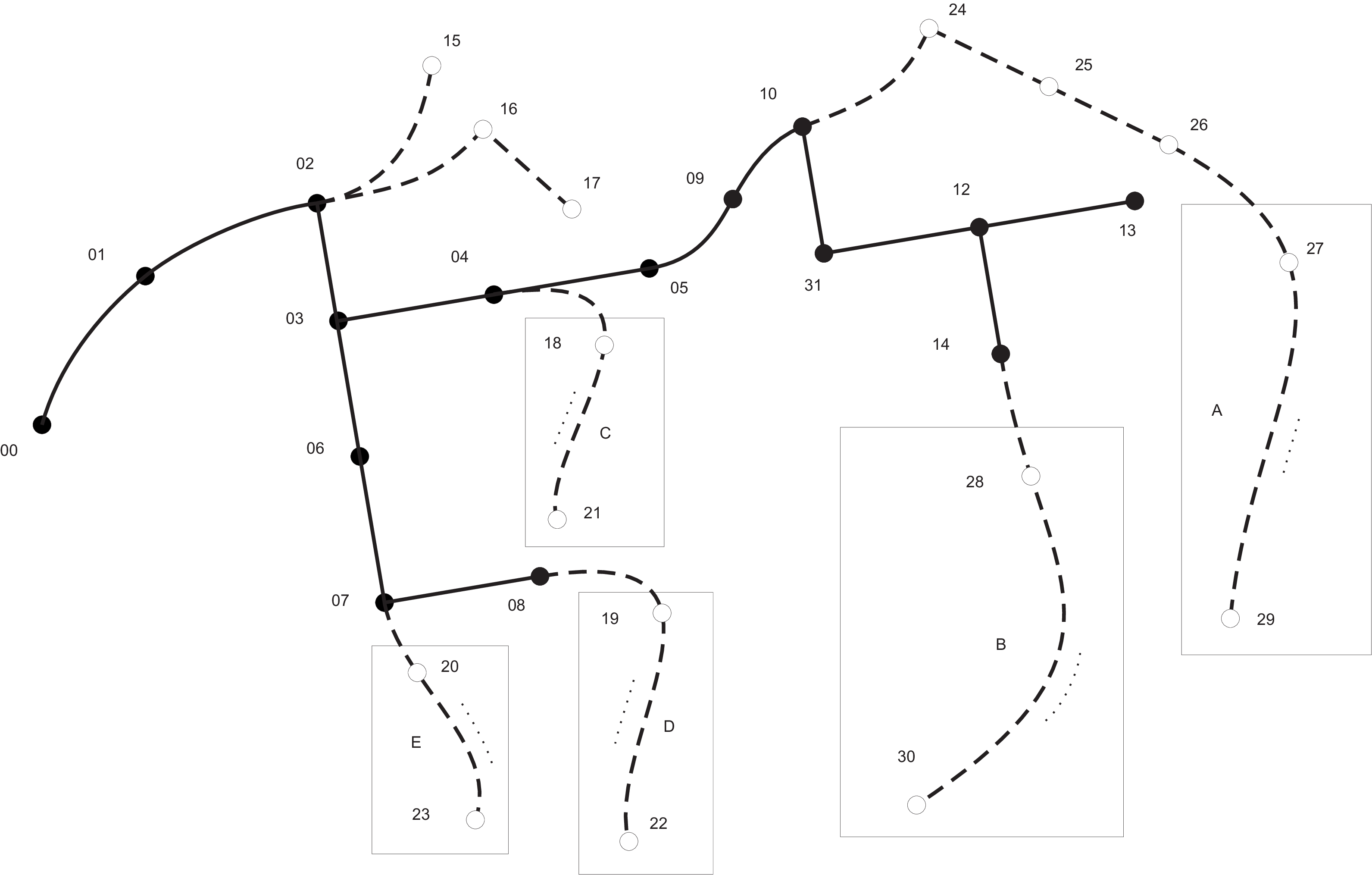}
\end{center}
\caption{\label{fig-ex-C-B} Singular points of $\xi$ and base points of $\mathcal{J}(\xi)$.}
\end{figure}

The representation of $BP(\mathcal{J}(\xi))$ in Figure \ref{fig-ex-C-C} shows in particular that topologically generic polars of $\xi$ have seven branches. One of the branches is smooth, four of them have only one characteristic exponent, and the two remaining branches have two characteristic exponents. This example also shows that $BP(\mathcal{J}(\xi))$ may contain a lot of points which are simple on the topologically generic polars.

Now we run the algorithm. Step 1 sets $\mathcal{R} = \mathcal{S} = \emptyset$, $\mathcal{D} = \{ p_{15}, p_{17}, p_{21}, p_{22}, p_{23}, p_{29}, p_{30} \}$, and since $O \not \in \mathcal{D}$ we go to step 3.

The polar invariants are $I_{15} = I_{17} = 132$, $I_{21} = 129$, $I_{22} = \frac{799}{7}$, $I_{23} = \frac{225}{2}$, $I_{29} = \frac{543}{4}$ and $I_{30} = \frac{678}{5}$. Hence, in step 4 we must process the dicritical points in the order $p_{29}, p_{30}, p_{15}, p_{17}, p_{21}, p_{22}, p_{23}$.

\begin{itemize}
\item Start with $p_{29}$. We have $p' = p_9$ and $p=p_{10}$ because $\frac{m_{p_9}}{n_{p_9}} = \frac{537}{4} < I_{29} = \frac{543}{4} \leqslant \frac{m_{p_{10}}}{n_{p_{10}}} = \frac{544}{4} = 136$. Since it is the first iteration, we take $q_0 = p_{10}$ and perform 4(d).
    \begin{itemize}
    \item $\frac{m_{q_0}}{n_{q_0}} = 136 > \frac{543}{4} = I_{29}$, so that we take $q_1 = p_{11}$, the first satellite of $p_{10}$.
    \item $\frac{m_{q_1}}{n_{q_1}} = \frac{1083}{8} < \frac{543}{4} = I_{29}$, so $q_2 = p_{12}$, the second satellite of $p_{11}$.
    \item $\frac{m_{q_2}}{n_{q_2}} = \frac{407}{3} < \frac{543}{4} = I_{29}$, and therefore $q_3 = p_{13}$, the second satellite of $p_{12}$.
    \end{itemize}
    Since $\frac{m_{q_3}}{n_{q_3}} = \frac{2172}{16} = I_{29}$, this first iteration finishes with $\mathcal{R} = \{ p_{13} \}$ and $\mathcal{S} = \{ O, p_1, \ldots, p_5, p_9, \ldots, p_{13} \}$.
\item Take $p_{30}$. Since $\frac{m_{p_9}}{n_{p_9}} = \frac{537}{4} < I_{30} = \frac{678}{5} \leqslant \frac{m_{p_{14}}}{n_{p_{14}}} = \frac{2712}{20}$, we have $p' = p_9$ and $p = p_{10}$. But $p_{10}$ has already appeared as $p$, and so the rupture point associated to $p_{30}$ is $p_{14}$, the last $p_{10}$-satellite point in $\mathcal{K}(p_{30})$. Up to now we have $\mathcal{R} = \{ p_{13}, p_{14} \}$ and $\mathcal{S} = \{ O, p_1, \ldots, p_5, p_9, \ldots, p_{14} \}$.
\item Take the point $p_{15}$. Since $\frac{m_{p_1}}{n_{p_1}} = 100 < I_{15} = 132 \leqslant \frac{m_{p_2}}{n_{p_2}} = 133$, we have $p' = p_1$ and $p = p_2$. It is the first time $p_2$ appears, so we must perform the iterations of 4(d) starting from $q_0 = p = p_2$:
    \begin{itemize}
    \item $\frac{m_{q_0}}{n_{q_0}} = 133 > 132 = I_{15}$, and hence we take $q_1 = p_3$, the first satellite of $p_2$.
    \item $\frac{m_{q_1}}{n_{q_1}} = \frac{245}{2} < 132 = I_{15}$, and hence we take $q_2 = p_4$, the second satellite of $p_3$.
    \item $\frac{m_{q_2}}{n_{q_2}} = \frac{387}{3} < 132 = I_{15}$, and hence we take $q_3 = p_5$, the second satellite of $p_4$.
    \end{itemize}
    And we stop here because $\frac{m_{q_3}}{n_{q_3}} = \frac{528}{4} = 132 = I_{15}$. We finish this step by setting $\mathcal{R} = \{ p_5, p_{13}, p_{14} \}$ and $\mathcal{S} = \{ O, p_1, \ldots, p_5, p_9, \ldots, p_{14} \}$.
\item The case of $p_{17}$ is exactly the same of $p_{15}$, so we omit it.
\item Take the point $p_{21}$. Since $\frac{m_{p_1}}{n_{p_1}} = 100 < I_{21} = \frac{387}{3} \leqslant \frac{m_{p_2}}{n_{p_2}} = 133$, we have $p' = p_1$ and $p = p_2$. But $p_2$ has already appeared, and hence we obtain that the rupture point associated to $p_{21}$ is $p_4$, the last $p_2$-satellite point in $\mathcal{K}(p_{21})$. Therefore we have by the moment $\mathcal{R} = \{ p_4, p_5, p_{13}, p_{14} \}$ and $\mathcal{S} = \{ O, p_1, \ldots, p_5, p_9, \ldots, p_{14} \}$.
\item Consider the point $p_{22}$. We have again $p' = p_1$ and $p = p_2$ because $\frac{m_{p_1}}{n_{p_1}} = 100 < I_{22} = \frac{799}{7} \leqslant \frac{m_{p_2}}{n_{p_2}}  = 133$, and since $p_2$ has already appeared as the point $p$, the rupture point associated to $p_{22}$ is the last $p_2$-satellite point in $\mathcal{K}(p_{22})$: $p_8$. We finish this step by setting $\mathcal{R} = \{ p_4, p_5, p_8, p_{13}, p_{14} \}$ and $\mathcal{S} = \{ O, p_1, \ldots, p_{14} \}$.
\item We finally take $p_{23}$, the last dicritical point. We have again $p' = p_1$ because $\frac{m_{p_1}}{n_{p_1}} = 100 < I_{23} = \frac{225}{2} \leqslant \frac{m_{p_2}}{n_{p_2}}  = 133$, and hence $p = p_2$. But it has already appeared (three times), and therefore the rupture point associated to $p_{23}$ is $p_7$.
\end{itemize}

%%%%%%%%%%%%%%%%%%%%%%%%%%%%%%%%Aquí va la 2a Figura
\begin{figure}
\begin{center}
\psfrag{A}{$\gamma_1$} \psfrag{B}{$\gamma_2$} \psfrag{C}{$\gamma_3$} \psfrag{D}{$\gamma_4$} \psfrag{E}{$\gamma_5$}

\psfrag{00}{\scriptsize $(50,50)$} \psfrag{01}{\scriptsize $(50,100)$} \psfrag{02}{\scriptsize $(32,132)$} \psfrag{03}{\scriptsize $(13,245)$} \psfrag{04}{\scriptsize $(10,387)$} \psfrag{05}{\scriptsize $(9,528)$} \psfrag{06}{\scriptsize $(3,348)$} \psfrag{07}{\scriptsize $(2,450)$} \psfrag{08}{\scriptsize $(1,799)$} \psfrag{09}{\scriptsize $(9,537)$} \psfrag{10}{\scriptsize $(6,543)$} \psfrag{12}{\scriptsize $(2,1628)$} \psfrag{13}{\scriptsize $(1,2172)$} \psfrag{14}{\scriptsize $(1,2712)$} \psfrag{15}{\scriptsize $(3,1083)$}
\includegraphics[scale=0.22]{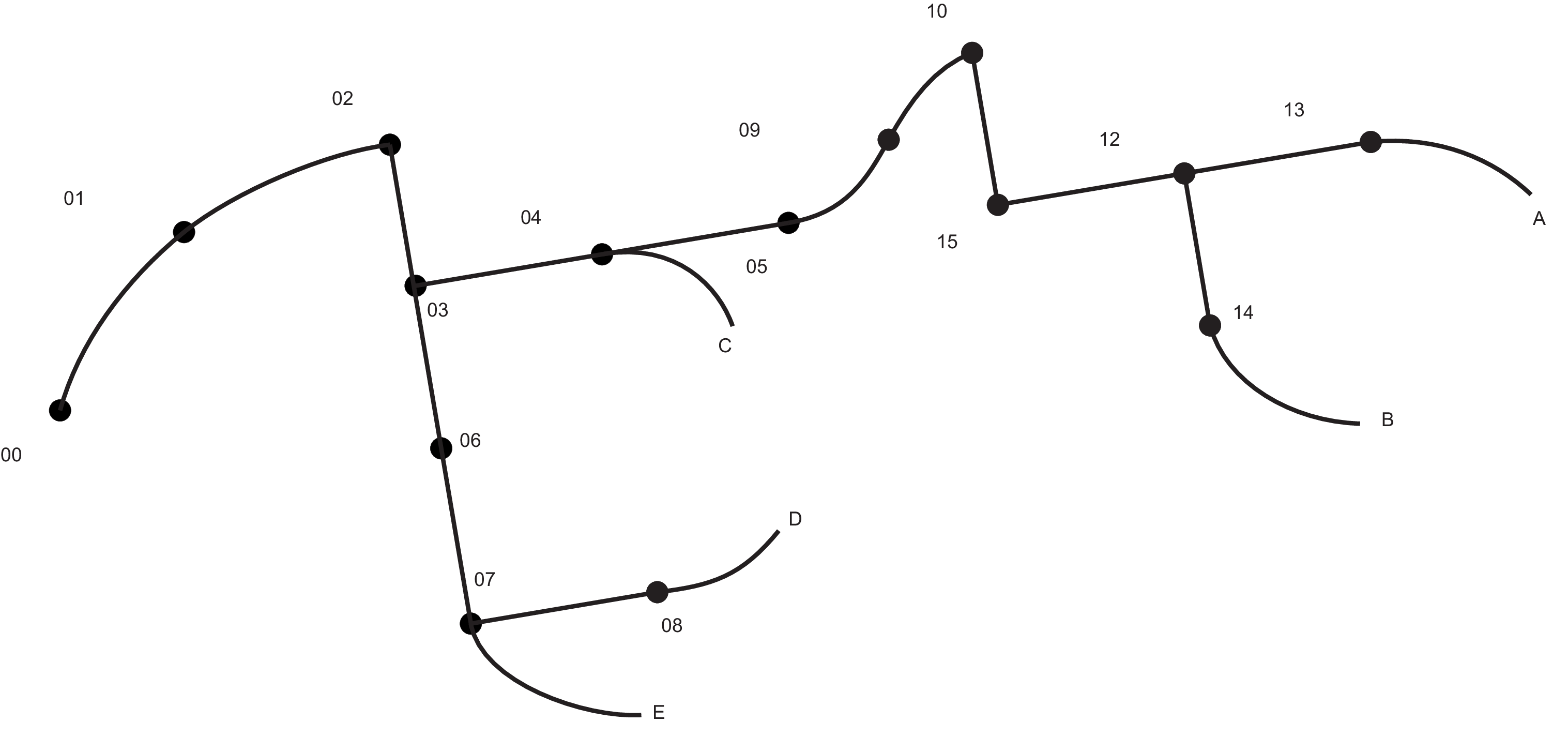}
\end{center}
\caption{\label{fig-ex-C-A} Singular points of $\xi$ with its multiplicities and values $(e_p(\xi),v_p(\xi))$.}
\end{figure}

Thus, the first part of the algorithm finishes with $\mathcal{R} = \{ p_4, p_5, p_7, p_8, p_{13}, p_{14} \}$ and $\mathcal{S} = \{ O, p_1, \ldots, p_{14} \}$, which actually coincide with $\mathcal{R}(\xi)$ and $\mathcal{S}(\xi)$ respectively.

\begin{figure}
\begin{center}
\psfrag{A}{$21$} \psfrag{B}{$43$} \psfrag{C}{$5$} \psfrag{D}{$9$} \psfrag{E}{$5$}

\psfrag{00}{\footnotesize$49$} \psfrag{01}{\footnotesize$49$} \psfrag{02}{\footnotesize$32$} \psfrag{03}{\footnotesize$12$} \psfrag{04}{\footnotesize$9$} \psfrag{05}{\footnotesize$8$} \psfrag{06}{\footnotesize$3$} \psfrag{07}{\footnotesize$2$} \psfrag{08}{\footnotesize$1$} \psfrag{09}{\footnotesize$8$} \psfrag{10}{\footnotesize$6$} \psfrag{12}{\footnotesize$1$} \psfrag{14}{\footnotesize$1$} \psfrag{15}{\footnotesize$1$} \psfrag{16}{\footnotesize$1$} \psfrag{17}{\footnotesize$1$} \psfrag{18}{\footnotesize$1$} \psfrag{19}{\footnotesize$1$} \psfrag{20}{\footnotesize$1$} \psfrag{21}{\footnotesize$1$} \psfrag{22}{\footnotesize$1$}  \psfrag{23}{\footnotesize$1$} \psfrag{24}{\footnotesize$1$} \psfrag{25}{\footnotesize$1$} \psfrag{26}{\footnotesize$1$} \psfrag{27}{\footnotesize$1$} \psfrag{28}{\footnotesize$1$} \psfrag{29}{\footnotesize$1$} \psfrag{30}{\footnotesize$1$} \psfrag{31}{\footnotesize$2$}
\includegraphics[scale=0.22]{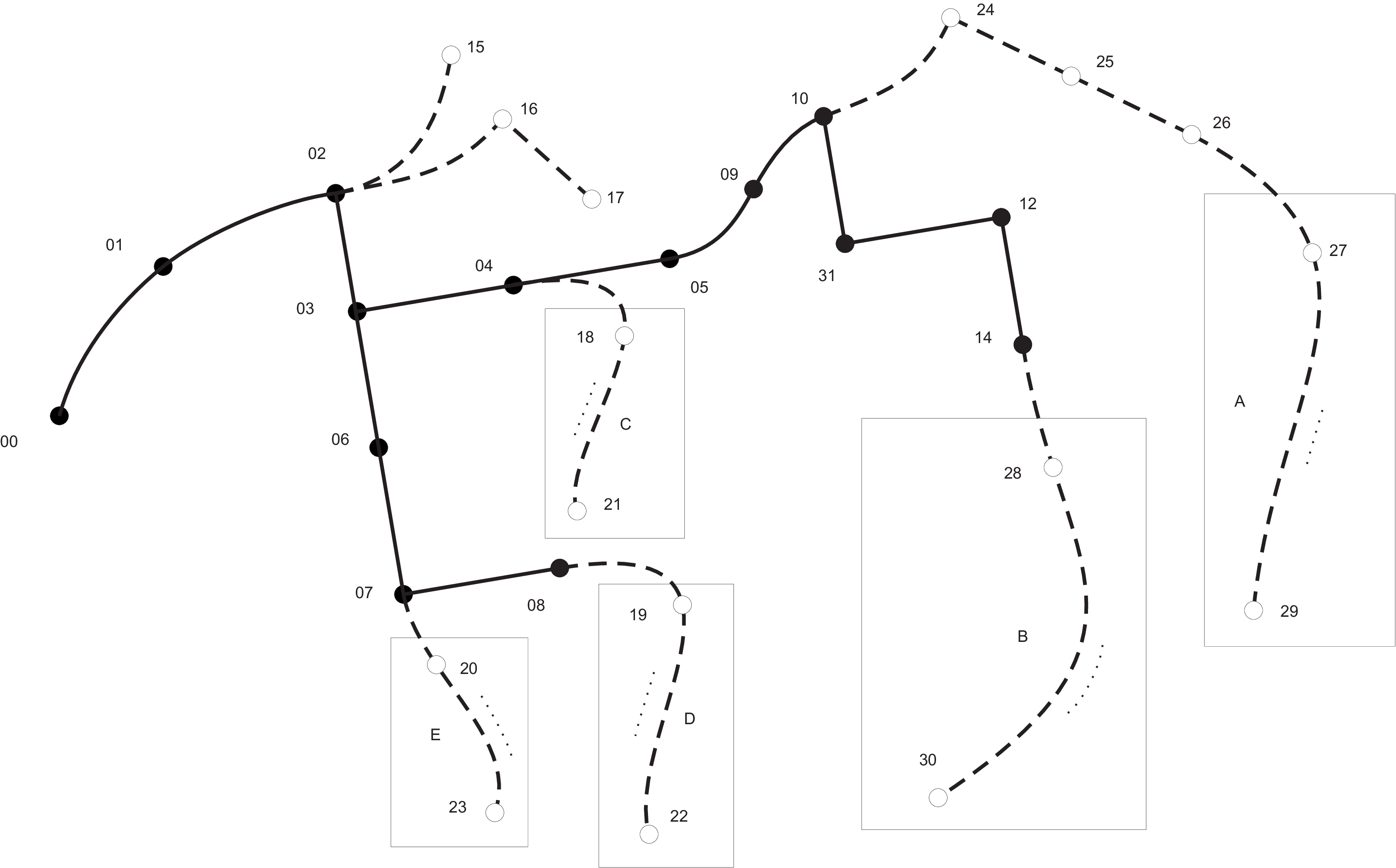}
\end{center}
\caption{\label{fig-ex-C-C} Base points of $\mathcal{J}(\xi)$ with its virtual multiplicities $\nu_p$.}
\end{figure}

The second part begins recovering the values of the rupture points: \[v_{p_4} = 387, \quad v_{p_5} = 528, \quad v_{p_7} = 450, \quad v_{p_8} = 799, \quad v_{p_{13}} = 2172, \quad \text{and} \quad v_{p_{14}} = 2712.\]

Then we take care of the free singular non-rupture points, starting with \[v_O = m_O = 50, \quad v_{p_1} = m_{p_1} = 50 \quad \text{and} \quad v_{p_9} = m_{p_9} = 537\] because $p_1$, $p_2$ and $p_{10}$ are free singular points in the first neighbourhoods of $O$, $p_1$ and $p_9$ respectively. Next, \[v_{p_2} = 132 \quad \text{and} \quad v_{p_{10}} = 543\] because they are the only integers in the intervals \[\left[ \frac{n_{p_2}}{n_{p_5}} m_{p_5}, \frac{n_{p_2}}{n_{p_5}} m_{p_5} + 1 \right) = [132,133) \quad \text{and} \quad \left[ \frac{n_{p_{10}}}{n_{p_{13}}} m_{p_{13}}, \frac{n_{p_{10}}}{n_{p_{13}}} m_{p_{13}} + 1 \right) = [543,544)\] respectively, and $p_5$ (resp. $p_{13}$) is the biggest $p_2$-satellite (resp. $p_{10}$-satellite) rupture point.

Finally, we must consider the satellite non-rupture points, which are $p_3, p_6, p_{11}$ and $p_{12}$. In first place, both $p_3$ and $p_6$ are smaller than $p_5$, the biggest $p_2$-satellite rupture point, and hence we have \[v_{p_3} = m_{p_3} = 245 \quad \text{and} \quad v_{p_6} = m_{p_6} = 348\] because the second instance of step 3 applies. In second place, both $p_{11}$ and $p_{12}$ are smaller than $p_{13}$, which is the biggest $p_{10}$-satellite rupture point. Therefore we get \[v_{p_{11}} = m_{p_{11}} = 1083 \quad \text{and} \quad v_{p_{12}} = m_{p_{12}} = 1628\] by the same reason as above.

\begin{figure}
\begin{center}
\psfrag{A}{$21$} \psfrag{B}{$43$} \psfrag{C}{$5$} \psfrag{D}{$9$} \psfrag{E}{$5$}

\psfrag{00}{\footnotesize$\frac{50}{1}$} \psfrag{01}{\footnotesize$\frac{100}{1}$} \psfrag{02}{\footnotesize$\frac{133}{1}$} \psfrag{03}{\footnotesize$\frac{245}{2}$} \psfrag{04}{\footnotesize$\frac{387}{3}$} \psfrag{05}{\footnotesize$\frac{528}{4}$} \psfrag{06}{\footnotesize$\frac{348}{3}$} \psfrag{07}{\footnotesize$\frac{450}{4}$} \psfrag{08}{\footnotesize$\frac{799}{7}$} \psfrag{09}{\footnotesize$\frac{537}{4}$} \psfrag{10}{\footnotesize$\frac{544}{4}$} \psfrag{12}{\footnotesize$\frac{1628}{12}$}  \psfrag{13}{\footnotesize$\frac{2172}{16}$} \psfrag{14}{\footnotesize$\frac{2712}{20}$} \psfrag{15}{\footnotesize$\frac{135}{1}$} \psfrag{16}{\footnotesize$\frac{135}{1}$} \psfrag{17}{\footnotesize$\frac{268}{2}$} \psfrag{18}{\footnotesize$\frac{389}{3}$} \psfrag{19}{\footnotesize$\frac{801}{7}$} \psfrag{20}{\footnotesize$\frac{452}{4}$} \psfrag{21}{\footnotesize$\frac{397}{3}$} \psfrag{22}{\footnotesize$\frac{817}{7}$} \psfrag{23}{\footnotesize$\frac{460}{4}$} \psfrag{24}{\footnotesize$\frac{546}{4}$} \psfrag{25}{\footnotesize$\frac{1090}{8}$} \psfrag{26}{\footnotesize$\frac{1634}{12}$} \psfrag{27}{\footnotesize$\frac{1636}{12}$} \psfrag{28}{\footnotesize$\frac{2714}{20}$} \psfrag{29}{\footnotesize$\frac{1676}{12}$} \psfrag{30}{\footnotesize$\frac{2798}{20}$} \psfrag{31}{\footnotesize$\frac{1083}{8}$}
\includegraphics[scale=0.22]{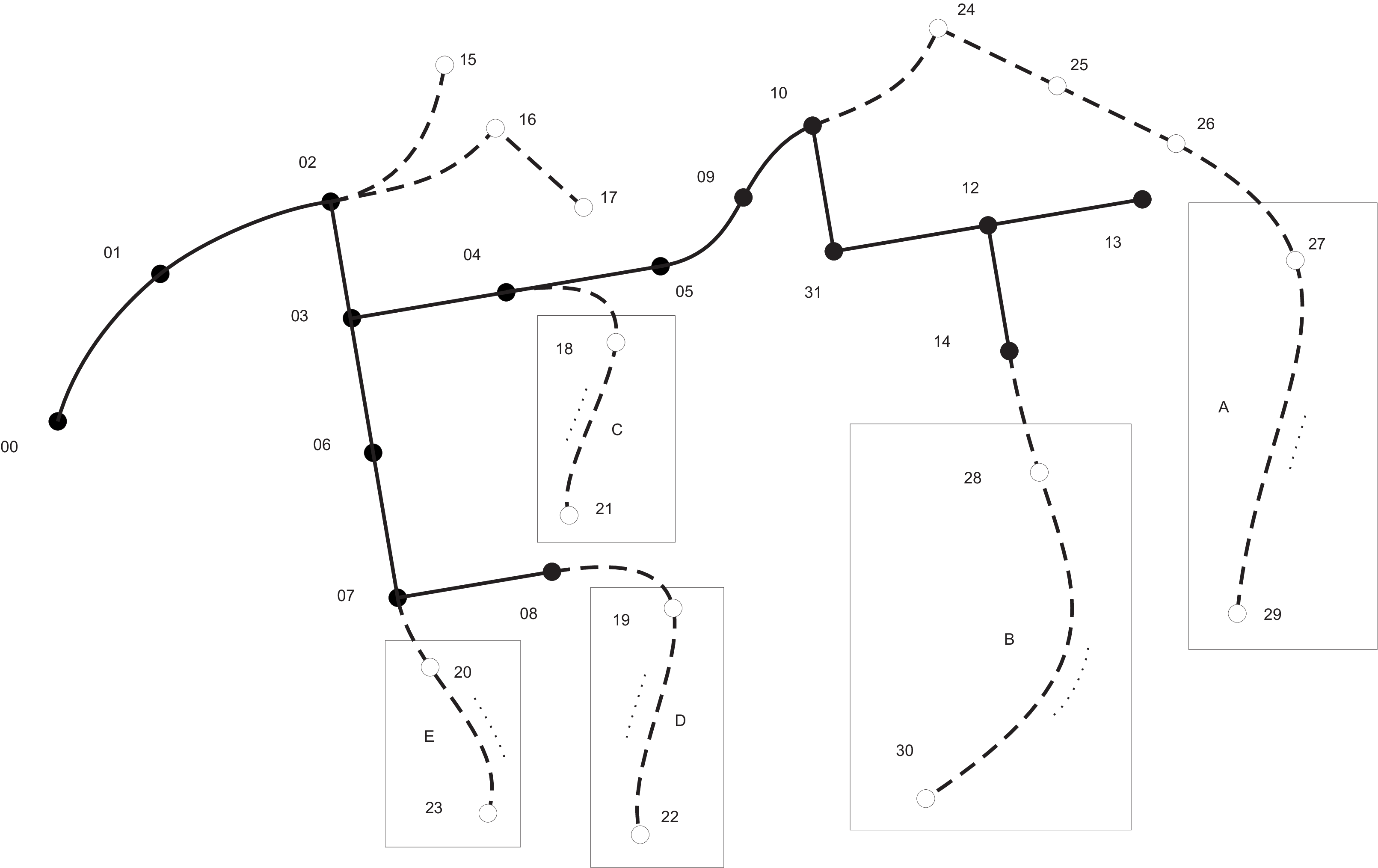}
\end{center}
\caption{\label{fig-ex-C-D} $\mathcal{S}(\xi) \cup BP(\mathcal{J}(\xi))$ with heights of the trunks and multiplicities of $\varphi_p$, $\left(\frac{m_p}{n_p}\right)$.}
\end{figure}

As in all the other examples, it is immediate to check that these values are the values of $\xi$ at its singular points, as claimed.
\end{exam}

\small

\end{document}